\newtheorem{thm}{Theorem}[section]
\newtheorem{pro}[thm]{Proposition}
\newtheorem{lem}[thm]{Lemma}
\newtheorem{cor}[thm]{Corollary}
\newtheorem{thm&def}[thm]{Theorem \& Definition}
\newtheorem{lem&def}[thm]{Lemma \& Definition}
\theoremstyle{definition}
\newtheorem{defi}[thm]{Definition}
\newtheorem{exa}[thm]{Example}
\theoremstyle{remark}
\newtheorem{rmk}[thm]{Remark}
\newcommand{\ncm}{\newcommand}
\ncm{\Cat}{\mathsf{Cat}}
\ncm{\CAT}{\mathsf{CAT}}
\ncm{\ACT}{\mathsf{ACT}}
\ncm{\ob}{\operatorname{ob}}
\ncm{\Nat}{\operatorname{Nat}}
\ncm{\Set}{\mathsf{Set}}
\ncm{\Ab}{\mathsf{Ab}}
\ncm{\Add}{\mathsf{Add}}
\ncm{\Mnd}{\mathsf{Mnd}}
\ncm{\Cmd}{\mathsf{Cmd}}
\ncm{\Fun}{\mathsf{Fun}}
\ncm{\Mon}{\mathsf{Mon}}
\ncm{\Elt}{\mathsf{Elt}\,}
\ncm{\Sub}{\mathsf{Sub}\,}
\ncm{\Flat}{\mathsf{Flat}}
\ncm{\add}{\text{\Large\textsl{a}}}
\ncm{\proj}{\mathsf{proj}}
\ncm{\Col}{\mathsf{Col}\,}
\ncm{\fgmod}[1]{\mathsf{mod}\text{-}#1}
\ncm{\UEE}{\mathsf{u\text{-}eff}}
\ncm{\EEE}{\mathsf{e\text{-}eff}}
\ncm{\HEE}{\mathsf{h\text{-}eff}}
\ncm{\EE}{\mathsf{eff}}
\ncm{\Split}{\mathsf{split}}
\ncm{\Cl}{\mathsf{Cl}}
\ncm{\ord}{\mathsf{ord}}
\ncm{\Mod}[1]{\mathbf{Mod}\text{-}#1}
\ncm{\Bimod}[2]{#1\text{-}\mathbf{Mod}\text{-}#2}
\ncm{\Comod}[1]{\mathbf{Comod}\text{-}#1}
\ncm{\Sh}{\mathbf{Sh}}
\ncm{\rMonCAT}{\textsf{r-MonCAT}}
\ncm{\A}{\mathcal{A}}
\ncm{\B}{\mathcal{B}}
\ncm{\C}{\mathcal{C}}
\ncm{\D}{\mathcal{D}}
\ncm{\E}{\mathcal{E}}
\ncm{\F}{\mathcal{F}}
\ncm{\G}{\mathcal{G}}
\ncm{\Ha}{\mathcal{H}}
\ncm{\I}{\mathcal{I}}
\ncm{\J}{\mathcal{J}}
\ncm{\K}{\mathcal{K}}
\ncm{\Ll}{\mathcal{L}}
\ncm{\M}{\mathcal{M}}
\ncm{\N}{\mathcal{N}}
\ncm{\Ou}{\mathcal{O}}
\ncm{\Pee}{\mathcal{P}}
\ncm{\R}{\mathcal{R}}
\ncm{\X}{\mathcal{X}}
\ncm{\W}{\mathcal{W}}
\ncm{\V}{\mathcal{V}}
\ncm{\U}{\mathcal{U}}
\ncm{\T}{\mathcal{T}}
\ncm{\Teven}{\mathcal{T}_\mathsf{e}}
\ncm{\Tcan}{\mathcal{T}_\mathsf{can}}
\ncm{\dom}{\operatorname{dom}}
\ncm{\cod}{\operatorname{cod}}
\ncm{\End}{\operatorname{End}}
\ncm{\Aut}{\operatorname{Aut}}
\ncm{\Hom}{\operatorname{Hom}}
\ncm{\kernel}{\operatorname{ker}}
\ncm{\Ker}{\operatorname{Ker}}
\ncm{\coker}{\operatorname{coker}}
\ncm{\Coker}{\operatorname{Coker}}
\ncm{\im}{\operatorname{im}}
\ncm{\Img}{\operatorname{Im}}
\ncm{\coim}{\operatorname{coim}}
\ncm{\id}{\operatorname{id}}
\ncm{\Center}{\operatorname{Center}}
\ncm{\colim}{\operatorname{colim}}
\ncm{\Colim}[1]{\underset{#1}{\operatorname{colim}}}
\ncm{\Lan}{\operatorname{Lan}}
\ncm{\Cone}{\operatorname{Cone}}
\ncm{\ev}{\operatorname{ev}}
\ncm{\coev}{\operatorname{coev}}
\ncm{\cf}{\operatorname{cf}}
\ncm{\hgt}{\operatorname{ht}}
\ncm{\ci}{\,\circ\,}
\ncm{\smp}{\ast}
\ncm{\smpq}{\smp_q}
\ncm{\bu}{\bullet}
\ncm{\bo}{\,\Box\,}
\ncm{\ot}{\otimes}
\ncm{\oV}{\odot}
\ncm{\oE}{\underset{\scriptscriptstyle E}{\odot}}
\ncm{\hot}{\,\bar{\ot}\,}
\ncm{\uot}{\,\Eob{\ot}\,} 
\ncm{\uoV}[1]{\underset{#1}\oV}
\ncm{\x}{\times}
\ncm{\ex}[1]{\underset{\scriptstyle #1}{\times}}
\ncm{\am}[1]{\underset{\scriptscriptstyle #1}{\ot}}
\ncm{\amo}[1]{\underset{\scriptstyle #1}{\ot}}
\ncm{\mash}{\Pisymbol{psy}{35}}
\ncm{\mashed}[1]{\underset{\scriptscriptstyle #1}{\Pisymbol{psy}{35}}}
\ncm{\cross}[1]{\underset{\scriptstyle #1}{\rtimes}}
\ncm{\rarr}[1]{\stackrel{#1}{\longrightarrow}}
\ncm{\larr}[1]{\stackrel{#1}{\longleftarrow}}
\ncm{\mapsot}{\leftarrow\!\!\!\raisebox{1pt}{$\scriptscriptstyle |$}}
\ncm{\oR}{\am{R}}
\ncm{\oS}{\am{S}}
\ncm{\cop}{\Delta}
\ncm{\oneT}{^{(1)}}
\ncm{\twoT}{^{(2)}}
\ncm{\threeT}{^{(3)}}
\ncm{\oneB}{_{(1)}}
\ncm{\twoB}{_{(2)}}
\ncm{\threeB}{_{(3)}}
\ncm{\eps}{\varepsilon}
\ncm{\bra}{\langle}
\ncm{\ket}{\rangle}
\ncm{\dirim}[1]{{#1}_*}
\ncm{\invim}[1]{{#1}^*}
\ncm{\asso}{a} 
\ncm{\luni}{l} 
\ncm{\runi}{r} 
\ncm{\iso}{\stackrel{\sim}{\rightarrow}}
\ncm{\iiso}{\rarr{\sim}}
\ncm{\ract}{\,\triangleleft\,}
\ncm{\lact}{\triangleright}
\ncm{\under}{\mbox{\,\rm\_}\,}
\ncm{\adj}{\dashv}
\ncm{\adjoint}{\dashv}
\ncm{\into}{\hookrightarrow}
\ncm{\can}{\mathrm{can}}
\ncm{\fgp}{\mathrm{fgp}}
\ncm{\op}{\mathrm{op}}
\ncm{\coop}{\mathrm{coop}}
\ncm{\rev}{\mathrm{rev}}
\ncm{\fl}{\mathrm{flat}}
\ncm{\sst}{\scriptstyle}
\ncm{\ssst}{\scriptscriptstyle}
\ncm{\eqby}[1]{\stackrel{(\ref{#1})}{=}}
\ncm{\lef}{{\ssst <}}
\ncm{\righ}{{\ssst >}}
\ncm{\NN}{\mathbb{N}}
\ncm{\ZZ}{\mathbb{Z}}
\ncm{\QQ}{\mathbb{Q}}
\ncm{\GG}{\mathbf{G}}
\ncm{\FF}{\mathbb{F}}
\ncm{\TT}{\mathsf{T}}
\ncm{\Q}{\mathsf{Q}}
\ncm{\pisharp}{\Pisymbol{psy}{35}}
\ncm{\Pre}{\hat\C}
\ncm{\She}{\mathsf{Sh}}
\ncm{\Sig}{{\Sigma}}
\ncm{\icog}{J}
\ncm{\alg}[1]{{\bar #1}}  
\ncm{\Eob}[1]{\underline{#1}} 
\ncm{\lf}[1]{\bar{#1}}
\ncm{\Forg}{\mathcal{G}}
\ncm{\JJ}{\mathcal{J}}
\ncm{\parallelpair}{
\parbox{43pt}{
\begin{picture}(43,8)
\put(3,6){\vector(1,0){37}}
\put(3,2){\vector(1,0){37}}
\end{picture}
}}
\ncm{\pair}[2]{\overset{#1}{\underset{#2}{\parallelpair}}}
\ncm{\longparallelpair}{
\parbox{63pt}{
\begin{picture}(63,8)
\put(3,6){\vector(1,0){57}}
\put(3,2){\vector(1,0){57}}
\end{picture}
}}
\ncm{\longpair}[2]{\overset{#1}{\underset{#2}{\longparallelpair}}}
\ncm{\longerparallelpair}{
\parbox{83pt}{
\begin{picture}(83,8)
\put(3,6){\vector(1,0){77}}
\put(3,2){\vector(1,0){77}}
\end{picture}
}}
\ncm{\longerpair}[2]{\overset{#1}{\underset{#2}{\longerparallelpair}}}
\ncm{\longrightarrowtail}{
\parbox{40pt}{
\begin{picture}(40,8)
\put(6,4){\line(-1,1){1.5}}
\put(6,4){\line(-1,-1){1.5}}
\put(6,4){\vector(1,0){31}}
\end{picture}
}}
\ncm{\longerrightarrowtail}{
\parbox{60pt}{
\begin{picture}(60,8)
\put(6,4){\line(-1,1){1.5}}
\put(6,4){\line(-1,-1){1.5}}
\put(6,4){\vector(1,0){51}}
\end{picture}
}}
\ncm{\longrarr}[1]{
\overset{#1}{
\parbox{40pt}{
\begin{picture}(40,8)
\put(3,4){\vector(1,0){34}}
\end{picture}
}}}
\ncm{\longerrarr}[1]{
\overset{#1}{
\parbox{70pt}{
\begin{picture}(70,8)
\put(3,4){\vector(1,0){64}}
\end{picture}
}}}
\ncm{\longerlarr}[1]{
\overset{#1}{
\parbox{70pt}{
\begin{picture}(70,8)
\put(67,4){\vector(-1,0){64}}
\end{picture}
}}}
\ncm{\longlarr}[1]{\overset{#1}{\longleftarrow}}
\ncm{\antiparallelpair}{
\parbox{23pt}{
\begin{picture}(23,4)
\put(3,3){\vector(1,0){17}}
\put(20,1){\vector(-1,0){17}}
\end{picture}
}}
\ncm{\invantiparallelpair}{
\parbox{23pt}{
\begin{picture}(23,4)
\put(3,1){\vector(1,0){17}}
\put(20,3){\vector(-1,0){17}}
\end{picture}
}}
\ncm{\dualpair}[2]{\overset{#1}{\underset{#2}{\antiparallelpair}}}
\ncm{\invdualpair}[2]{\overset{#1}{\underset{#2}{\invantiparallelpair}}}
\ncm{\coantiparallelpair}{
\parbox{23pt}{
\begin{picture}(23,4)
\put(3,1){\vector(1,0){17}}
\put(20,4){\vector(-1,0){17}}
\end{picture}
}}
\ncm{\codualpair}[2]{\overset{#1}{\underset{#2}{\coantiparallelpair}}}
\ncm{\binarydirectsum}[7]{#1\codualpair{#2}{#3}#4\dualpair{#5}{#6}#7}
\ncm{\equalizer}[1]{\overset{#1}{\longrightarrowtail}}
\ncm{\epi}[1]{\overset{#1}{\twoheadrightarrow}}
\ncm{\coequalizer}[1]{
\overset{#1}{
\parbox{40pt}{
\begin{picture}(40,8)
\put(2,4){\vector(1,0){32}} \put(37,4){\vector(1,0){0}}
\end{picture}
}}}
\ncm{\longcoequalizer}[1]{
\overset{#1}{
\parbox{60pt}{
\begin{picture}(60,8)
\put(2,4){\vector(1,0){52}} \put(57,4){\vector(1,0){0}}
\end{picture}
}}}
\ncm{\mono}[1]{\overset{#1}{\rightarrowtail}}
\ncm{\coequalizerfactorizationold}[9]{
\parbox[r]{115pt}{
\begin{picture}(115,70)(0,-5)
\put(0,48){$#1\longpair{#2}{#3}#4$} 
\end{picture}
}
\parbox[l]{80pt}{
\begin{picture}(80,70)(0,-5)
\put(2,51){\vector(1,0){70}} \put(75,51){\vector(1,0){0}}
\put(23,56){$\sst #5$}
\put(80,48){$#6$}
\put(2,47){\vector(2,-1){73}}
\put(48,30){$\sst #7$}
\dashline[+30]{3}(100,42)(100,12) \put(100,12){\vector(0,-1){0}}
\put(105,30){$\sst #8$}
\put(95,0){$#9$}
\end{picture}
}}
\ncm{\coequalizerfactorization}[9]{
\parbox[r]{143pt}{
\begin{picture}(143,70)(0,-5)
\put(0,48){$#1\longpair{#2}{#3}#4$} 
\end{picture}
}
\parbox[l]{80pt}{
\begin{picture}(80,70)(0,-5)
\put(2,51){\vector(1,0){70}} \put(75,51){\vector(1,0){0}}
\put(30,56){$\sst #5$}
\put(80,48){$#6$}
\put(2,47){\vector(2,-1){73}}
\put(41,30){$\sst #7$}
\dashline[+30]{3}(93,42)(93,12) \put(93,12){\vector(0,-1){0}}
\put(98,30){$\sst #8$}
\put(88,0){$#9$}
\end{picture}
}}
\begin{document}

\title[On the tensor product of modules over skew monoidal actegories]{On the tensor product of modules over\\ skew monoidal actegories}
\author{Korn\'el Szlach\'anyi}

\begin{abstract}
This paper is about skew monoidal tensored $\V$-categories (= skew monoidal hommed $\V$-actegories) and their categories of modules. A module over $\bra\M,\smp,R\ket$
is an algebra for the monad $T=R\smp\under$ on $\M$. We study in detail the skew monoidal structure of $\M^T$ and construct a skew monoidal forgetful functor $\M^T\to\,_E\M$
to the category of $E$-objects in $\M$ where $E=\M(R,R)$ is the endomorphism monoid of the unit object $R$. Then we give conditions for the forgetful functor to be strong monoidal and 
for the category $\M^T$ of modules to be monoidal. In formulating these conditions a notion of `self-cocomplete' subcategories of presheaves appears to be useful
which provides also some insight into the problem
of monoidality of the skew monoidal structures found by Altenkirch, Chapman and Uustalu on functor categories $[\C,\M]$.
\end{abstract}
\address{Wigner Research Centre for Physics of the Hungarian Academy of Sciences,\newline
\noindent 1525 Budapest, P.O.Box 49, Hungary}
\email{szlachanyi.kornel@wigner.mta.hu}
\maketitle

\section{Introduction}

A skew monoidal category consists of a category $\M$, a functor $\M\x\M\rarr{\smp} \M$, an object $R\in\M$ and comparison morphisms $\gamma:L\smp(M\smp N)\to
(L\smp M)\smp N$, $\eta:M\to R\smp M$, $\eps:M\smp R\to M$ satisfying usual monoidal category axioms but without the assumption that they are isomorphisms. This structure,  more primitive than a monoidal category, turned out to contain all algebraic information about a bialgebroid over a non-commutative ring $R$. As it was shown in 
\cite[Theorem 9.1]{SMC} the closed skew monoidal structures on $\Mod{R}$ with unit object being the right regular $R$-module are precisely the right bialgebroids $B$ 
over $R$. 
Under this correspondence the category of right $B$-modules becomes, not a `module category' over the skew monoidal $\M$ but simply, the Eilenberg-Moore category $\M^T$
of the monad $T=R\smp\under$ on $\M$. This canonical monad is present also in any monoidal category although it is an uninteresting identity monad. For skew monoidal
categories, however, it is the structure of $\M^T$ that embodies the representation theory of the skew monoidal $\M$ if viewed as a generalized bialgebroid.

It is well-known that the category of modules over a bialgebra, weak bialgebra or bialgebroid $B$ has a monoidal structure. It is defined using the coalgebra structure
of $B$ and the forgetful functor to the underlying monoidal category $\Mod{k}$ or $\Bimod{R}{R}$. In case of a skew monoidal $\M$ there is no a priori given underlying 
monoidal category, only the category $\M$ itself. This makes the construction of any (skew?) monoidal product on $\M^T$ non-trivial and in general impossible unless certain
right exactness conditions are fulfilled.
In the first version of this paper (presented in a talk \cite{Sz: Swansea}) we have constructed such a skew tensor product provided $\M$ had reflexive coequalizers and
$\smp$ preserved them. Independently, S. Lack and R. Street have given in \cite{Lack-Street: On monads and warpings} the same definition of the tensor but noticed also that 
it exists if $\smp$ preserves reflexive coequalizers only in the second argument. We call the tensor product $\hot$ on $\M^T$ the `horizontal tensor' in the hope that
the `vertical tensor' of comodules will also be found, although the latter so far has resisted all attempts.

Having defined the skew monoidal structure $\bra\M^T,\hot,\alg{R}\ket$ on the modules over $\M$ the next step is to construct a skew monoidal forgetful functor
to some underlying skew monoidal category. The natural candidate for the underlying category is the category $_E\M$ of $E$-objects in $\M$ where $E$ denotes the endomorphism 
monoid of $R$. In case of an $R$-bialgebroid $_E\M$ is just the bimodule category $\Bimod{R}{R}$. The skew monoidal product $\uot$ on $_E\M$ can be introduced
in two equivalent ways: Either as the horizontal tensor product of an even more primitive skew monoidal structure on $\M$ (which uses only the object $R$ but not $\smp$),
or as the Altenkirch-Chapman-Uustalu construction \cite{Altenkirch-Chapman-Uustalu} applied to $_E\M\equiv[E,\M]$ by considering $E$ as a 1-object category. 
The justification for this choice of $\uot$ is that a monadic and skew monoidal functor $\Forg:\M^T\to\,_E\M$ can be constructed.

We do all these constructions in the framework of tensored $\V$-categories. 
Assuming the existence of tensors $V\oV M$ of $V\in\V$ and $M\in\M$ is a technical assumption which, by the equivalence 
\cite{Gordon-Power,Janelidze-Kelly} between enriched categories \cite{Kelly} with tensors and actegories \cite{Pareigis II.} with hom objects, 
allows us to work with ordinary functors and natural transformations. 
This does not mean that skew monoidal categories without tensors are uninteresting. Skew monoidal monoids \cite{SMM} are good examples. 
Only in Section \ref{monoidality} we make an exception by studying general $\V$-categories $\M$ in order to characterize the functor categories $[\C,\M]$ for
which the skew monoidal structure of \cite{Altenkirch-Chapman-Uustalu} is monoidal. Introducing \textit{self-cocompleteness} we find that if $\M$ is a self-cocomplete subcategory of presheaves over $\C$, or, in the case of $\C=E$, it is a self-cocomplete subcategory $\W\subseteq\V_E$ of right $E$-modules then $[\C,\M]$, respectively $_E\M$, is a monoidal category.
Under appropriate right exactness conditions on $\bra\M,\smp,R\ket$ this implies that $\bra\M^T,\hot,\alg{R}\ket$ is also monoidal and the forgetful functor $\Forg$ is strong
monoidal.

\section{Preparations}

\subsection{Actegories}

Let $\bra \V,\oV,I,\asso,\luni,\runi\ket$ be a monoidal category in which the orientation of the coherence isomorphisms 
\begin{align*}
\asso_{U,V,W}&:U\oV(V\oV W)\iso(U\oV V)\oV W\\
\luni_V&:V\iso I\oV V\\
\runi_V&:V\oV I\iso V
\end{align*}
are chosen according to the right skew monoidal convention \cite{SMC}. Therefore the monoidal category axioms (the redundant set of 5 axioms \cite{CWM}), when written without 
inverses of $a$, $l$ or $r$, look precisely as the skew monoidal\footnote{Throughout this paper we use the term `skew monoidal' for `right skew monoidal'.} category axioms. 

\begin{defi} \label{def: V-act}
A left actegory over $\V$ consists of
\begin{itemize}
\item a category $\M$,
\item a functor $\under\oV\under:\V\x\M\to\M$
\item and natural isomorphisms 
\begin{align*}
\asso_{U,V,M}:U\oV(V\oV M)&\iso(U\oV V)\oV M\\
\luni_M:M&\iso I\oV M
\end{align*}
\end{itemize}
satisfying one pentagon and two triangle conditions
\begin{align}
(\asso_{U,V,W}\oV M)\ci\asso_{U,V\oV W,M}\ci(U\oV\asso_{V,W,M})&=\asso_{U\oV V,W,M}\ci\asso_{U,V,W\oV M}\\
\asso_{I,V,M}\ci\luni_{V\oV M}&=\luni_V\oV M\\
(\runi_V\oV M)\ci\asso_{V,I,M}\ci (V\oV\luni_M)&=V\oV M\,.
\end{align}
\end{defi}

For typographical reasons we are using the same symbol for the action of $\V$ on $\M$ and for the monoidal product of $\V$ and also the $\asso$ and $\luni$ have ambiguous meanings. Notice also the different form of the two triangle equations which is the consequence of our ``skew" convention of orienting left and right unitors. 
The advantage is that we can immediately say what a left skew actegory would be over a skew monoidal $\V$. It has the same structure and axioms as above, just the invertibility of all
coherence isomorphisms are relaxed. 

In fact we will need skew actegories in the sequel although not left but right ones. So we give here the right and lax version of Definition \ref{def: V-act}
(see also \cite{Lack-Street: lift})
\begin{defi} \label{def: skew-V-act}
A right skew actegory over a skew monoidal $\V$ consists of
\begin{itemize}
\item a category $\M$,
\item a functor $\under\oV\under:\M\x\V\to\M$
\item and natural transformations
\begin{align*}
\asso_{M,U,V}:M\oV(U\oV V)&\to(M\oV U)\oV V\\
\runi_M:M\oV I&\to M
\end{align*}
\end{itemize}
satisfying one pentagon and two triangle conditions
\begin{align}
\label{ract-1}
(\asso_{M,U,V}\oV W)\ci\asso_{M,U\oV V,W}\ci(M\oV\asso_{U,V,W})&=\asso_{M\oV U,V,W}\ci\asso_{M,U,V\oV W}\\
\label{ract-2}
(\runi_M\oV V)\ci\asso_{M,I,V}\ci (M\oV\luni_V)&=M\oV V\\
\label{ract-3}
\runi_{M\oV V}\ci\asso_{M,V,I}&=M\oV \runi_V\,.
\end{align}
\end{defi}
All skew monoidal categories in this paper are right skew monoidal. Thus both left and right 
actegories are considered over the same type of skew monoidal categories.
Whether left or right, the actegory axioms appear as subsets of the skew monoidal category axioms if we disregard the types 
($\M$ or $\V$) in the arguments of the natural transformations.

\begin{exa} \label{exa: M^T as M-actegory}
Let $\bra\M,\smp,R,\gamma,\eta,\eps\ket$ be any skew monoidal category. Then the Eilenberg-Moore category
$\M^T$ of the canonical monad $T=\bra R\smp\under,\mu,\eta\ket$ is a right skew actegory over $\M$. 
As a matter of fact, for a $T$-algebra $\alg{M}=\bra M,\alpha\ket$ and for an object $N$ of $\M$ we can define the $T$-algebra
\begin{equation}\label{eq: def ract}
\alg{M}\ract N\ :=\ \bra M\smp N,(\alpha\smp N)\ci\gamma_{R,M,N}\ket\,.
\end{equation}
The associator and the right unitor of $\M$ can be lifted to $\M^T$ as natural transformations
\begin{align}
\label{lifted gamma}
\gamma_{\alg{M},L,K}&:\alg{M}\ract(L\smp K)\to(\alg{M}\ract L)\ract K\\
\label{lifted eps}
\eps_{\alg{M}}&:\alg{M}\ract R\to\alg{M}\,.
\end{align}
They satisfy coherence conditions (\ref{ract-1}), (\ref{ract-2}) and (\ref{ract-3}) that make $\M^T$ a right $\M$-actegory in the skew sense.
\end{exa}

\begin{defi} \label{def: act mor}
A morphism $\M\to\N$ of left $\V$-actegories consists of
\begin{itemize}
\item a functor $F:\M\to\N$
\item and a natural transformation $F_{V,M}:V\oV FM\to F(V\oV M)$, called the strength, 
\end{itemize}
subject to the coherence conditions
\begin{align}
\label{strength-1}
F_{V\oV W,M}\ci\asso_{V,W,FM}&=F\asso_{V,W,M}\ci F_{V,W\oV M}\ci(V\oV F_{V,M})\\
\label{strength-2}
F_{I,M}\ci\luni_{FM}&=F\luni_M\,.
\end{align}
The morphism $F$ is called \textsl{strong} when its strength is an isomorphism and \textsl{strict} when its strength is the identity.
\end{defi}

\begin{defi} 
Let $F,G:\M\to\N$ be morphisms of left $\V$-actegories. A transformation $F\to G$ of actegory morphisms is nothing but a natural transformation $\nu:F\to G$ satisfying
\begin{equation} \label{ax: nu}
\nu_{V\oV M}\ci F_{V,M}\ =\ G_{V,M}\ci(V\oV\nu_M)\,.
\end{equation}
\end{defi}

The left $\V$-actegories, their morphisms and transformations are the objects, the 1-cells and 2-cells of a 2-category
 $\V$-$\ACT$. As an instance of doctrinal adjunction \cite{Kelly-doc-adj} a morphism $F:\M\to\N$ in $\V$-$\ACT$ has a right adjoint if and only if
it is strong and the ordinary functor $F:\M\to\N$ has a right adjoint.

In order to deal with (skew) monoidal products on $\V$-actegories as actegory morphisms the 2-category $\V$-$\ACT$ needs some
extra structure. In its stead we choose a multicategory strategy by introducing multivariable morphisms of actegories. From here on $\V$ is
symmetric monoidal.

\begin{defi}
For left $\V$-actegories $\M_1$,\dots,$\M_n$ and $\N$ an $n$-variable morphism $\M_1\x\dots\x\M_n\to\N$ of left $\V$-actegories is a functor $F:\M_1\x\dots\x\M_n\to\N$ such that
\begin{enumerate}
\item for each index $1\leq k\leq n$ there is a natural transformation
\[
F^{(k)}_{V,\underline{M}}\ :\ V\oV F(\underline{M})\to F(V\uoV{k}\underline{M}),
\]
where $V\uoV{k}\under$ denotes the action of $V$ on the $k$-th argument, such that for each fixed $M_i$, $i\neq k$, 
the $F^{(k)}_{V,\underline{M}}$ is a strength for the $k$-th partial functor
\[
F(M_1,\dots,M_{k-1},\under,M_{k+1},\dots, M_n)\ :\ \M_k\to\N\,;
\]
\item for each pair of indices $1\leq j < k\leq n$ the strengths $F^{(j)}$ and $F^{(k)}$ are compatible in the sense of 
\begin{align*}
F^{(k)}_{V,U\uoV{j}\underline{M}}\ci(V\oV F^{(j)}_{U,\underline{M}})&=
F^{(j)}_{U,V\uoV{k}\underline{M}}\ci(U\oV F^{(k)}_{V,\underline{M}})\ci\\
&\ci \asso^{-1}_{U,V,F\underline{M}}\ci(s_{V,U}\oV F\underline{M})\ci\asso_{V,U,F\underline{M}}
\end{align*}
where $s_{U,V}:U\oV V\iso V\oV U$ is the symmetry in $\V$.
\end{enumerate}

For a parallel pair of $n$-variable actegory morphisms $F,G:\M_1\x\dots\x\M_n\to\N$ a transformation $\nu:F\to G$
is a natural transformation that satisfies (\ref{ax: nu}) in each variable.
\end{defi}

In rare cases one needs a more general notion of actegory morphism than that of Definition \ref{def: act mor}.
\begin{defi} \label{def: gen act mor}
For left actegories $_\V\M$ and $_\W\N$ a morphism $_\V\M\to\,_\W\N$ is a pair $_FF'$ consisting of a monoidal functor $F:\V\to\W$ and of a functor $F':\M\to\N$ equipped with strength
\[
F'_{V,M}:FV\oV F'M\to F'(V\oV M)
\]
subject to the coherence conditions
\begin{align}
\label{gen-strength-1}
F'_{V_1\oV V_2,M}\ci(F_{V_1,V_2}\oV F'M)\ci\asso_{FV_1,FV_2,F'M}&=F'\asso_{V_1,V_2,M}\ci F'_{V_1,V_2\oV M}\ci(FV_1\oV F'_{V_2,M})\\
\label{gen-strength-2}
F'_{I,M}\ci(F_0\oV F'M)\ci\luni_{F'M}&=F'\luni_M\,.
\end{align}
In other words, $F'$ is a morphism of $\M\to F^*\N$ in $\V$-$\ACT$ where $F^*\N$ is obtained from $\N$ by ``restriction of scalars".

For a parallel pair $_FF'$, $_GG'$ of actegory morphisms $_\V\M\to_\W\N$ a transformation $_FF'\to\,_GG'$ is a pair $_\nu\nu'$ consisting of a monoidal natural transformation $\nu:F\to G$ and of a natural transformation $\nu':F'\to G'$ satisfying
\[
\nu'_{V\oV M}\ci F'_{V,M}\ =\ G'_{V,M}\ci (\nu_V\oV\nu'_M)\,.
\]
\end{defi}
The actegories with these generalized morphisms and transformations constitute a 2-category $\ACT$ which contains, as sub-2-categories, both the 2-category of monoidal categories and the 2-category $\V$-$\ACT$ for each monoidal $\V$.

\subsection{Enriched categories}

Heretofore the base category $\V$ was an arbitrary symmetric monoidal category. For this Subsection and for the rest of the paper $\V$ will be a 
symmetric closed monoidal category with all small limits and colimits. As for the $\V$-actegories we restrict attention to those left $\V$-actegories $\M_0$ for which the action, as a functor of the first argument, $\under\oV M:\V\to\M_0$, has a right adjoint $\M(M,\under)$. That such $\V$-actegories are equivalent to $\V$-enriched categories is in the folklore for a long time. A precise formulation of the equivalence has been given by Gordon and Power in \cite{Gordon-Power} for the general case of
action by enrichment over a bicategory. For a more recent account on the subject see \cite{Janelidze-Kelly}.

Let us summarize briefly how the $\V$-category structure emerges from the actegory structure. First one introduces the $\V$-valued hom $\M(M,N)$ as the value of the right adjoint $\M(M,\under)$ on $N$. Then the unit and counit of the adjunction
\begin{equation}
\label{tensor-adj0}
\M_0(V\oV M,N)\ \cong\ \V(V,\M(M,N))
\end{equation}
denoted by
\begin{align*}
\coev_{M,V}&:V\to\M(M,V\oV M)\\
\ev_{M,N}&:\M(M,N)\oV M\to N
\end{align*}
can be used to construct the strength of $\M(M,\under)$ by
\begin{align}
\label{def of chi}
\chi_{V,M,N}&:=\M(M,V\oV\ev_{M,N})\ci\M(M,\asso^{-1}_{V,\M(M,N),M})\ci\coev_{M,V\oV \M(M,N)}\\
\notag&:\ V\oV\M(M,N)\to\M(M,V\oV N)\,.
\end{align}
Equipped with this strength the functor $\M(M,\under)$ becomes a morphism of $\V$-actegories $\M_0\to\V$ and $\ev_M$
and $\coev_M$ become transformations of actegory morphisms, i.e., they satisfy
\begin{align}
\label{V-nat of ev}
V\oV\ev_{M,N}&=\ev_{M,V\oV N}\ci(\chi_{V,M,N}\oV M)\ci\asso_{V,\M(M,N),M}\\
\label{V-nat of coev}
\coev_{M,U\oV V}&=\M(M,\asso_{U,V,M})\ci\chi_{U,M,V\oV M}\ci(U\oV\coev_{M,V})\,.
\end{align}
Using these identities and the coherence conditions for the strength $\chi$ it not difficult to show that the $\V$-category composition and unit defined by
\begin{align}
\label{c via ev and H}
c_{L,M,N}&:=\M(L,\ev_{M,N})\ci\chi_{\M(M,N),L,M}\ :\ \M(M,N)\oV\M(L,M)\to\M(L,N)\\
i_M&:=\M(M,\luni^{-1}_M)\ci\coev_{M,I}\quad:\ \ I\ \to\ \M(M,M)
\end{align}
obey the axioms of a $\V$-enriched category. This $\V$-category $\M$ is then automatically tensored since the adjunction
(\ref{tensor-adj0}) readily implies the isomorphism
\[
\M(V\oV M,N)\cong [V,\M(M,N)]\,,
\]
$\V$-natural in $N$ where $[\under,\under]$ denotes left internal hom in $\V$.

This correspondence between hommed $\V$-actegories and tensored $\V$-categories extends to the morphisms and to the 
transformations in the following way.
\begin{pro}
Given tensored $\V$-categories $\M$ and $\N$ and given an object map $M\mapsto FM$ there is a bijection between
\begin{itemize}
\item the `arrow map' data $F_{M,M'}:\M(M,M')\to\N(FM,FM')$ of a $\V$-functor $F:\M\to\N$
\item and the strength data $F_{V,M}:V\oV FM\to F(V\oV M)$ of an actegory morphism $F:\M_0\to\N_0$
\end{itemize}
given by the following relations: 
\begin{align}
F_{V,M}&=\hat F_{M,V\oV M}\ci(\coev_{M,V}\oV FM)\\
\label{arrow map from strength}
F_{M,M'}&=\N(FM,\hat F_{M,M'})\ci\coev_{FM,\M(M,M')}
\end{align}
where $\hat F_{M,M'}:\M(M,M')\oV FM\to FM'$ is the diagonal arrow in the commuting square
\begin{equation}
\begin{CD}
\M(M,M')\oV FM@>F_{M,M'}\oV FM>>\N(FM,FM')\oV FM\\
@V{F_{\M(M,M'), M}}VV @VV{\ev_{FM,FM'}}V\\
F(\M(M,M')\oV M)@>F\ev_{M,M'}>>FM'\ .
\end{CD}
\end{equation}
Given $\V$-functors $F,G:\M\to\N$ between tensored $\V$-categories a natural transformation $\nu:F\to G$ is $\V$-natural
if and only if it is a transformation of actegory morphisms.
\end{pro}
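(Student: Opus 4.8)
The plan is to read both the strength $F_{V,M}$ and the arrow map $F_{M,M'}$ off a single intermediate family, the diagonal $\hat F_{M,M'}\colon\M(M,M')\oV FM\to FM'$, by tracing the two tensor--hom adjunctions $\under\oV M\adj\M(M,\under)$ in $\M$ and $\under\oV FM\adj\N(FM,\under)$ in $\N$ through the construction. Formula (\ref{arrow map from strength}) exhibits $F_{M,M'}$ as the adjoint transpose of $\hat F_{M,M'}$ under the $\N$-adjunction, while the first displayed relation $F_{V,M}=\hat F_{M,V\oV M}\ci(\coev_{M,V}\oV FM)$ specialises $\hat F$ along the unit $\coev$ of the $\M$-adjunction. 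Accordingly, from arrow-map data I build $\hat F_{M,M'}:=\ev_{FM,FM'}\ci(F_{M,M'}\oV FM)$, the upper-right leg of the defining square, and then a strength; from strength data I build $\hat F_{M,M'}:=F\ev_{M,M'}\ci F_{\M(M,M'),M}$, the lower-left leg, and then an arrow map. Because the two legs agree for a genuine morphism, these descriptions of $\hat F$ coincide, and the bijection reduces to showing the two passages are mutually inverse.

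I would then verify the two round trips. Going from an arrow map to $\hat F$, then to a strength and back, the definitions unfold (using naturality of $\ev_{FM,\under}$) to the identity $\N(FM,F\ev_{M,M'})\ci F_{M,\M(M,M')\oV M}\ci\coev_{M,\M(M,M')}=F_{M,M'}$, which follows from the ordinary naturality of the arrow map in its second variable applied to $\ev_{M,M'}$, together with the triangle identity $\M(M,\ev_{M,M'})\ci\coev_{M,\M(M,M')}=\id$; the required naturality is itself a consequence of the $\V$-functor composition law. Going the other way, the triangle identity for the $\N$-adjunction collapses the rebuilt diagonal back to $\hat F$, after which the claim reduces, via naturality of the strength in its first variable applied to $\coev_{M,V}$, to the triangle identity $\ev_{M,V\oV M}\ci(\coev_{M,V}\oV M)=\id$ and the functoriality of $F$. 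Along the way one must check that the strength produced from an arrow map is genuinely natural in $V$ and in $M$, which is again extracted from $\V$-functoriality together with the $\V$-naturality identities (\ref{V-nat of ev}) and (\ref{V-nat of coev}).

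The substantive step, and what I expect to be the main obstacle, is the matching of the axioms. Using the explicit composition (\ref{c via ev and H}), the unit $i_M$, the internal-hom strength $\chi$ of (\ref{def of chi}), and the identities (\ref{V-nat of ev}) and (\ref{V-nat of coev}), I would show that the $\V$-functor composition law $F_{L,N}\ci c_{L,M,N}=c_{FL,FM,FN}\ci(F_{M,N}\oV F_{L,M})$ transposes under the $\N$-adjunction into the strength pentagon (\ref{strength-1}), and that the $\V$-functor unit law $F_{M,M}\ci i_M=i_{FM}$ transposes into the unitality (\ref{strength-2}). It is cleanest to recast both sides as conditions on $\hat F$---associativity of $\hat F$ against $c$ and normalisation of $\hat F$ against $i$---and then to observe that each $\V$-functor axiom and each strength axiom is equivalent to the corresponding $\hat F$-condition. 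The difficulty is entirely in threading $\ev$, $\coev$, $\chi$ and the associator $\asso$ through both adjunctions while keeping the two variances straight.

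Finally, for the transformation clause I would transpose the $\V$-naturality equation for $\nu$, an identity between morphisms $\M(M,M')\to\N(FM,GM')$, along $\under\oV FM\adj\N(FM,\under)$. Invoking the dictionary between arrow maps and strengths already established for $F$ and $G$, together with the triangle identities, this identity becomes exactly $\nu_{V\oV M}\ci F_{V,M}=G_{V,M}\ci(V\oV\nu_M)$, the actegory-transformation axiom (\ref{ax: nu}); conversely every transformation of actegory morphisms transposes back to a $\V$-natural one, so the two notions agree.
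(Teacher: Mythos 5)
Your argument is sound, but there is nothing in the paper to compare it against: the paper states this Proposition with no proof at all, as part of its summary of the folklore equivalence between tensored $\V$-categories and hommed $\V$-actegories, referring the reader to Gordon--Power and Janelidze--Kelly. Your proof is precisely the standard argument implicit in that setup, and it fills the gap the paper leaves open. Both passages are mediated by the single family $\hat F_{M,M'}$, obtained either as $\ev_{FM,FM'}\ci(F_{M,M'}\oV FM)$ (so that (\ref{arrow map from strength}) is the adjoint transpose under $\under\oV FM\dashv\N(FM,\under)$) or as $F\ev_{M,M'}\ci F_{\M(M,M'),M}$; your two round-trip computations are correct (naturality of $\ev_{FM,\under}$ plus the triangle identity in one direction, naturality of the strength in $V$ applied to $\coev_{M,V}$ plus the triangle identity in the other), and they also establish the commutativity of the defining square, which is itself part of the statement. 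The only portion you leave as a plan rather than a verification is the matching of axioms: that the $\V$-functor composition and unit laws correspond to the coherence conditions (\ref{strength-1}) and (\ref{strength-2}). This is indeed the laborious core, requiring the explicit $c$, $i$ and $\chi$ of (\ref{c via ev and H}) and (\ref{def of chi}) together with (\ref{V-nat of ev}) and (\ref{V-nat of coev}); your proposed organization---recasting both sides as conditions on $\hat F$, e.g.\ transposing the composition law to $\hat F_{L,N}\ci(c_{L,M,N}\oV FL)=\hat F_{M,N}\ci(\M(M,N)\oV\hat F_{L,M})\ci\asso^{-1}_{\M(M,N),\M(L,M),FL}$---is the right one and, once written out, closes the argument. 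The transformation clause is likewise handled correctly: enriched naturality of $\nu$ transposes to $\nu_{M'}\ci\hat F_{M,M'}=\hat G_{M,M'}\ci(\M(M,M')\oV\nu_M)$, which specializes along $\coev_{M,V}$ to (\ref{ax: nu}), and the converse follows from ordinary naturality of $\nu$ together with (\ref{ax: nu}).
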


Applying this Proposition to the partial functors of a multivariable actegory morphism $F$ we obtain $\V$-functors
$F_k:\M_k\to\N$ indexed by variables $M_i\in\M_i$, $i\neq k$, such that each pair $\bra F_j, F_k\ket$ obeys a compatibility condition of the form \cite[(4.4)]{Eilenberg-Kelly} and therefore determines a unique $\V$-functor 
$F:\M_1\ot\dots\ot\M_n\to\N$ on the tensor product of $\V$-categories. Note, however, that $\M_1\ot\dots\ot\M_n$
is not a tensored $\V$-category in a natural way so there is no strength for such a $\V$-functor. In its stead there are $n$ pieces of strengths, one for each partial $\V$-functor.

\subsection{Colimits} \label{ss: colim}

For a tensored $\V$-category $\M$ the general notion of $\V$-colimit \cite{Kelly} reduces to computation of tensors and enriched coends: For $\V$-functors $F:\D\to\M$ and $G:\D^\op\to\V$ the colimit of $F$ weighted by $G$ is the coend
\[
G\star F\ =\ \int^D\, GD\oV FD\,.
\]
But if we assume also that the $\V$-valued contravariant hom $\M(\under,M):\M_0^\op\to\V$ transforms colimits in $\M_0$ to limits in $\V$ then the above coend can be computed by an ordinary colimit in $\M_0$,
\[
\bigsqcup_{D,D'}\D(D,D')\oV (GD'\oV FD)\pair{}{}\bigsqcup_D \ GD\oV FD\coequalizer{}G\star F\,.
\]
\begin{lem}
Let $\M$ be a tensored $\V$-category and $\D$ an ordinary small category. Then $\M(\under,M)$ transforms colimits in $\M_0$ of functors $\D\to\M_0$ to limits in $\V$ for all $M\in\M_0$ if and only if $V\oV\under:\M_0\to\M_0$ preserves colimits of functors $\D\to\M_0$ for all $V\in\V$.
\end{lem}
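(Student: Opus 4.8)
The plan is to fix a single diagram $D:\D\to\M_0$ admitting a colimit in $\M_0$, with colimit cocone $\lambda_d:D(d)\to C$ where $C=\colim D$, and to show that for this $D$ the two displayed properties are equivalent; since both hypotheses quantify over all such diagrams, the lemma will follow. The bridge between the two conditions is the defining adjunction (\ref{tensor-adj0}) of the tensored structure, together with the elementary fact that the representable functors $\V(V,\under):\V\to\Set$ preserve limits and that they jointly reflect isomorphisms.

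First I would record the representable test for colimit cocones: a cocone $(\kappa_d:E_d\to L)$ over a functor $E:\D\to\M_0$ is a colimit cocone if and only if for every $N\in\M_0$ the canonical map $\M_0(L,N)\to\lim_d\M_0(E_d,N)$, $f\mapsto(f\ci\kappa_d)_d$, is a bijection. Applying this with $E_d=V\oV D(d)$, $L=V\oV C$ and $\kappa_d=V\oV\lambda_d$, the statement that $V\oV\under$ preserves the colimit of $D$ becomes the bijectivity, for all $N$, of
\[
\M_0(V\oV C,N)\longrightarrow\lim_d\M_0(V\oV D(d),N)\,.
\]

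Next I would run both sides through the adjunction $\M_0(V\oV M,N)\cong\V(V,\M(M,N))$, which is natural in $M$. On the left this gives $\V(V,\M(C,N))$; termwise on the right it gives $\lim_d\V(V,\M(D(d),N))$, and since $\V(V,\under)$ preserves limits this is $\V(V,\lim_d\M(D(d),N))$. Naturality of the adjunction in $M$ identifies the displayed map with $\V(V,\phi_N)$, where
\[
\phi_N:\M(C,N)\to\lim_d\M(D(d),N)
\]
is the canonical comparison built from the arrows $\M(\lambda_d,N)$; and $\phi_N$ being an isomorphism for every $N$ is precisely the assertion that $\M(\under,N)$ transforms the colimit of $D$ into a limit. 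Finally, a morphism of $\V$ is an isomorphism if and only if $\V(V,\under)$ carries it to a bijection for every $V$, so ``$\V(V,\phi_N)$ a bijection for all $V$'' is equivalent to ``$\phi_N$ an isomorphism''. Reorganising the two quantifiers ``for all $V$'' and ``for all $N$'' then yields the equivalence of the two conditions for the diagram $D$, hence the lemma.

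The computation is essentially formal, so the one point demanding care is the identification of the comparison map above with $\V(V,\phi_N)$: one must check that the naturality square for (\ref{tensor-adj0}) with respect to each $\lambda_d:D(d)\to C$ commutes, and that passing to the limit over $d$ is compatible with the preservation-of-limits isomorphism for $\V(V,\under)$. This is the main, though routine, obstacle; everything else is the representable characterisation of (co)limits and the Yoneda principle that representables jointly reflect isomorphisms in $\V$.
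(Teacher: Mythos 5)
Your proposal is correct and follows essentially the same route as the paper: the paper's own (one-line) proof invokes precisely the ordinary adjunction $\under\oV M\dashv\M(M,\under)$ together with the fact that set-valued hom-functors preserve and collectively reflect (co)limits, which are exactly the ingredients you spell out via the comparison map $\phi_N$ and the Yoneda-type reflection of isomorphisms. Your write-up is just a detailed unpacking of that argument, with the quantifier bookkeeping made explicit.
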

\begin{proof}
This follows from the ordinary adjunction $\under\oV M\dashv\M(M,\under)$ and from the fact that the (set valued) hom-functors preserve (transform) and collectively reflect (co)limits.
\end{proof}
\begin{cor}
Let $\M_0$ be a left $\V$ actegory for which the functors $\under\oV M:\V\to\M_0$ have right adjoints for all $M\in\M_0$. Then the emerging tensored $\V$-category $\M$ is cocomplete if and only if
\begin{itemize}
\item the ordinary category $\M_0$ is cocomplete,
\item the functors $V\oV\under:\M_0\to\M_0$ preserve (ordinary) colimits.
\end{itemize}
\end{cor}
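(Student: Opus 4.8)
The plan is to combine the preceding Lemma with the standard theory of weighted $\V$-colimits for tensored $\V$-categories, reducing the question of $\V$-cocompleteness to two ordinary conditions. Recall from the discussion in Subsection \ref{ss: colim} that a tensored $\V$-category $\M$ has all weighted colimits $G\star F=\int^D GD\oV FD$ precisely when two ingredients are available: first, all tensors $V\oV M$ (which exist by hypothesis, since $\M_0$ is a hommed left $\V$-actegory), and second, all enriched coends, which under the hypothesis that $\M(\under,M)$ sends colimits in $\M_0$ to limits in $\V$ can be computed by the ordinary coequalizer
\[
\bigsqcup_{D,D'}\D(D,D')\oV (GD'\oV FD)\pair{}{}\bigsqcup_D \ GD\oV FD\coequalizer{}G\star F\,.
\]
So the first step is to argue that $\M$ is $\V$-cocomplete if and only if $\M_0$ admits all these ordinary colimits \emph{and} the hom-functors $\M(\under,M)$ transform all such colimits into limits. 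The ``if'' direction is the explicit coend formula above; the ``only if'' direction uses that conical $\V$-colimits are in particular detected in $\M_0$ via the underlying ordinary category.

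The second step is to translate the two conditions into the stated form. For the first: the coequalizer above is built from small coproducts $\bigsqcup$ and tensors $V\oV(\under)$, so its existence for \emph{all} weights $G$ and diagrams $F$ over \emph{all} small $\D$ is equivalent to $\M_0$ being (ordinarily) cocomplete --- one direction is immediate, and conversely every ordinary colimit in $\M_0$ arises as such a coend by taking $\V=\Set$-style constant weights, i.e.\ weights $GD=I$. For the second condition I would invoke the Lemma directly: it states that $\M(\under,M)$ transforms colimits of functors $\D\to\M_0$ to limits in $\V$ for all $M$ if and only if $V\oV\under:\M_0\to\M_0$ preserves colimits of functors $\D\to\M_0$ for all $V$. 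Quantifying over all small $\D$, this says exactly that each $V\oV\under$ preserves all (ordinary) colimits, which is the second bullet.

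The main obstacle --- really the only delicate point --- is making precise that $\V$-cocompleteness (existence of \emph{all} weighted colimits, with arbitrary $\V$-valued weights $G$) is captured by the two ordinary conditions, rather than by a larger class of enriched limit-preservation statements. The key is that the reduction of $G\star F$ to the displayed coequalizer already packages the weight $G$ into ordinary coproducts and tensors; once the tensors exist and $\M_0$ is cocomplete, the coend exists as an object of $\M_0$, and the Lemma guarantees it has the correct universal property $\V$-enrichedly because $\M(\under,M)$ then sends it to the corresponding limit in $\V$. Thus no condition beyond the two bullets is needed, and conversely each bullet is forced by taking suitable special weights and diagrams. I would close by remarking that the symmetry of the two quantifications (over all $M$ versus over all $V$, both ranging over all small $\D$) is precisely what the Lemma supplies, so the equivalence is a clean consequence of it together with the adjunction $\under\oV M\dashv\M(M,\under)$.
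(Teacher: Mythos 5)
Your argument is correct, but it takes a genuinely different route from the paper: the paper's entire proof is a one-line citation, saying the Corollary ``is just a translation to the actegory language of'' Borceux, Vol.~2, Theorem 6.6.14, i.e.\ it outsources the whole content to the standard characterization of $\V$-cocompleteness for tensored $\V$-categories. What you have written is, in effect, a proof of that cited theorem in the present setting: you reduce an arbitrary weighted colimit $G\star F$ to the coequalizer-of-coproducts presentation of the coend, observe that this presentation has the correct \emph{enriched} universal property exactly when $\M(\under,M)$ turns the relevant ordinary colimits into limits, recover the converse by specializing to conical colimits (constant weight $I$), and then use the preceding Lemma to exchange the hom condition for preservation of ordinary colimits by the functors $V\oV\under$. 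Your route buys self-containedness and makes visible precisely where the Lemma enters; the paper's route buys brevity by deferring the coend bookkeeping to the literature. If you flesh out your sketch, two points should be made explicit: (1) to see that $\V$-cocompleteness forces ordinary cocompleteness of $\M_0$, you must feed in ordinary diagrams through the free $\V$-category on the ordinary index category (this uses that $\V$ has small coproducts, part of the standing hypotheses, and that each $\under\oV M$ preserves coproducts, being a left adjoint); and (2) in the ``only if'' direction, after producing the enriched conical colimit you need the remark that any ordinary colimit of the same diagram is isomorphic to it, so that \emph{every} ordinary colimit in $\M_0$, not merely the chosen one, is sent by $\M(\under,M)$ to a limit --- this is what licenses the application of the Lemma in the direction you need.
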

\begin{proof}
This is just a translation to the actegory language of \cite[Vol. 2, Theorem 6.6.14]{Borceux}.
\end{proof}

\section{Skew monoidal actegories}

Let $\bra \V,\oV,I,\asso,\luni,\runi,s\ket$ be a symmetric closed monoidal category with all small limits and colimits.
\begin{defi}
A skew monoidal $\V$-actegory consists of
\begin{itemize}
\item a $\V$-actegory $\M_0$ with action $\under\oV\under : \V\x\M_0\to\M_0$ and with coherence morphisms
\begin{align*}
\asso_{U,V,M}:U\oV(V\oV M)&\to(U\oV V)\oV M\\
\luni_M:M&\to I\oV M
\end{align*}
\item a 2-variable morphism $\under\smp\under:\M_0\x\M_0\to\M_0$ of $\V$-actegories, the skew tensor, with strengths
\begin{align*}
\Gamma_{V,M,N}&:V\oV(M\smp N)\to (V\oV M)\smp N\\
\Gamma'_{V,M,N}&:V\oV(M\smp N)\to M\smp(V\oV N)\,,
\end{align*}
\item an object $R$ of $\M_0$, called the skew unit,
\item and transformations of actegory morphisms
\begin{align*}
\gamma_{L,M,N}&:L\smp(M\smp N)\to(L\smp M)\smp N\\
\eta_M&:M\to R\smp M\\
\eps_M&:M\smp R\to M\,.
\end{align*}
\end{itemize}
The transformations are required to satisfy the skew monoidality axioms
\begin{align}
\label{SMC1}
(\gamma_{K,L,M}\smp N)\ci\gamma_{K,L\smp M,N}\ci(K\smp\gamma_{L,M,N})&=\gamma_{K\smp L,M,N}\ci\gamma_{K,L,M\smp N}\\
\label{SMC2}
\gamma_{R,M,N}\ci\eta_{M\smp N}&=\eta_M\smp N\\
\label{SMC3}
\eps_{M\smp N}\ci\gamma_{M,N,R}&=M\smp\eps_N\\
\label{SMC4}
(\eps_M\smp N)\ci\gamma_{M,R,N}\ci(M\smp\eta_N)&=M\smp N\\
\label{SMC5}
\eps_R\ci\eta_R&=R
\end{align}
\end{defi}

Unpacking the definition we obtain, beyond the 3 actegory axioms and the 5 skew monoidality axioms, 10 more:
\begin{align}
\label{sma-6}
\Gamma_{U\oV V,M,N}\ci\asso_{U,V,M\smp N}&=(\asso_{U,V,M}\smp N)\ci\Gamma_{U,V\oV M,N}\ci(U\oV\Gamma_{V,M,N})\\
\label{sma-7}
\Gamma_{I,M,N}\ci\luni_{M\smp N}&=\luni_M\smp N\\
\label{sma-8}
\Gamma'_{U\oV V,M,N}\ci\asso_{U,V,M\smp N}&=(M\smp\asso_{U,V,N})\ci\Gamma'_{U,M,V\oV N}\ci(U\oV\Gamma'_{V,M,N})\\
\label{sma-9}
\Gamma'_{I,M,N}\ci\luni_{M\smp N}&=M\smp\luni_N\\
\label{sma-10}
\Gamma'_{U,V\oV M,N}\ci (U\oV\Gamma_{V,M,N})&=\Gamma_{V,M,U\oV N}\ci(V\oV\Gamma'_{U,M,N})\ci\\
\notag
&\ci\asso^{-1}_{V,U,M\smp N}\ci(s_{U,V}\oV(M\smp N))\ci\asso_{U,V,M\smp N}\\
\label{sma-11}
\gamma_{V\oV L,M,N}\ci \Gamma_{V,L,M\smp N}&=
(\Gamma_{V,L,M}\smp N)\ci\Gamma_{V,L\smp M,N}\ci(V\oV\gamma_{L,M,N})\\
\label{sma-12}
\gamma_{L,V\oV M,N}\ci(L\smp\Gamma_{V,M,N})\ci\Gamma'_{V,L,M\smp N}&=
(\Gamma'_{V,L,M}\smp N)\ci\Gamma_{V,L\smp M,N}\ci(V\oV\gamma_{L,M,N})\\
\label{sma-13}
\gamma_{L,M,V\oV N}\ci(L\smp\Gamma'_{V,M,N})\ci\Gamma'_{V,L,M\smp N}&=
\Gamma'_{V,L\smp M,N}\ci(V\oV\gamma_{L,M,N})\\
\label{sma-14}
\Gamma'_{V,R,M}\ci(V\oV\eta_M)&=\eta_{V\oV M}\\
\label{sma-15}
\eps_{V\oV M}\ci\Gamma_{V,M,R}&=V\oV \eps_M\,.
\end{align}
The first five are the coherence conditions for the 2-variable actegory morphism $\bra \smp,\Gamma,\Gamma'\ket$ and the 
other five are the conditions for $\gamma$, $\eta$ and $\eps$ to be transformations of $\V$-actegory morphisms.

Of course, the above Definition is too general for our purposes. 
We are interested in $\V$-actegories the underlying category of which is a tensored $\V$-category and which satisfies also some right exactness conditions. 
\begin{defi} \label{def: rex smc}
Let $\V$ be a symmetric monoidal closed category with small limits and colimits.
A skew monoidal $\V$-actegory $\bra \M_0, \oV,\asso,\luni,\smp,\Gamma,\Gamma',R,\gamma,\eta,\eps\ket$ is called a 
\textsl{skew monoidal tensored $\V$-category} or, what is the same, a \textsl{skew monoidal hommed $\V$-actegory} if the following condition holds:
\begin{enumerate}
\item For each object $M$ of $\M_0$ the functor $\under\oV M:\V\to\M_0$ has a right adjoint $\M(M,\under)$.
\end{enumerate}
A skew monoidal tensored $\V$-category is called \textsl{r2-exact} if:
\begin{enumerate}
\setcounter{enumi}{1}
\item The category $\M_0$ has reflexive coequalizers.
\item For each object $V$ of $\V$ the functor $V\oV\under:\M_0\to\M_0$ preserves reflexive coequalizers.
\item For each object $M$ of $\M_0$ the functor $M\smp\under:\M_0\to\M_0$ is strong, i.e., $\Gamma'$ is an isomorphism.
\item For each object $M$ of $\M_0$ the functor $M\smp\under:\M_0\to\M_0$ preserves reflexive coequalizers.
\end{enumerate}
Similarly, we define \textsl{r1-exactness} by requiring, instead of (iv) and (v), invertibility of $\Gamma$ and preservation of reflexive coequalizers by the functors 
$\under\smp M$. If $\M$ is both r1-exact and r2-exact then it is called \textsl{r-exact}.
\end{defi}
\begin{lem} \label{lem: M^T is V-cat}
Let $\M$ be a r2-exact skew monoidal $\V$-category. Then the category $\M^T$ of modules over $\M$ is a tensored $\V$ category,
has reflexive coequalizers and the functors $V\oV\under:\M^T\to\M^T$ preserve them.
\end{lem}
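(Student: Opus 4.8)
The plan is to read condition (iv) as saying that the canonical monad $T=R\smp\under$ is a \emph{strong} $\V$-endofunctor of $\M$, its strength $V\oV TM\to T(V\oV M)$ being exactly $\Gamma'_{V,R,M}$, which (iv) declares invertible; since $\gamma,\eta,\eps$ are transformations of actegory morphisms, so are the unit $\eta$ and the multiplication $\mu_M=(\eps_R\smp M)\ci\gamma_{R,R,M}$ of $T$, so that $T$ is in fact a $\V$-monad. I would then obtain every required piece of structure on $\M^T$ by lifting the corresponding structure of $\M$ along the forgetful functor $U\colon\M^T\to\M_0$. First the action: for $\alg M=\bra M,\alpha\ket$ and $V\in\V$ put $V\oV\alg M:=\bra V\oV M,\,(V\oV\alpha)\ci(\Gamma'_{V,R,M})^{-1}\ket$. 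The unit law for this structure is (\ref{sma-14}) and the associativity law is the compatibility of $\mu$ with $\Gamma'$ recorded in (\ref{sma-12})--(\ref{sma-13}); since $U$ is faithful and the pentagon and triangles for $\oV$ involve only underlying arrows, the cells $\asso,\luni$ lift to algebra maps by (\ref{sma-8}) and (\ref{sma-9}), making $\M^T_0$ a $\V$-actegory for which $U$ is a \emph{strict} morphism.

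Next the reflexive coequalizers. Putting $M=R$ in (v) shows $T$ preserves reflexive coequalizers, and $\M_0$ has them by (ii); by the standard fact that a monad preserving reflexive coequalizers has its Eilenberg--Moore forgetful functor \emph{create} them, $\M^T$ has reflexive coequalizers and $U$ preserves them. Preservation by $V\oV\under$ is then formal: the $U$-image of a reflexive coequalizer in $\M^T$ is one in $\M_0$, to which (iii) applies; as $U$ is a strict actegory morphism this $V\oV\under$-image downstairs is the $U$-image of the $V\oV\under$-image upstairs, so creation of reflexive coequalizers by $U$ forces the latter to be a coequalizer in $\M^T$.

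It remains--and this is the crux--to show $\M^T$ is hommed, i.e.\ that each $\under\oV\alg M\colon\V\to\M^T$ has a right adjoint, which is what promotes the $\V$-actegory $\M^T_0$ to a tensored $\V$-category. I would take the candidate enriched hom $\M^T(\alg M,\alg N)\in\V$ to be the equalizer of the pair $\M(M,N)\rightrightarrows\M(TM,N)$ with legs $\M(\alpha,N)$ and $\M(TM,\nu)\ci T_{M,N}$, where $T_{M,N}$ is the arrow map of the $\V$-functor $T$, and prove the natural bijection $\M^T_0(V\oV\alg M,\alg N)\cong\V_0(V,\M^T(\alg M,\alg N))$. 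Transposing $g\colon V\oV M\to N$ to $\hat g\colon V\to\M(M,N)$ along $\under\oV M\dashv\M(M,\under)$ and precomposing with the invertible $\Gamma'_{V,R,M}$, the algebra-morphism condition becomes $g\ci(V\oV\alpha)=\nu\ci(R\smp g)\ci\Gamma'_{V,R,M}$ as maps $V\oV TM\to N$; transposing the two sides along $\under\oV TM\dashv\M(TM,\under)$ gives $\M(\alpha,N)\ci\hat g$ and $\M(TM,\nu)\ci T_{M,N}\ci\hat g$ respectively. Hence $g$ is an algebra map iff $\hat g$ equalizes the two legs, i.e.\ factors through $\M^T(\alg M,\alg N)$, which is the desired adjunction.

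The main obstacle is the last identification, that the transpose of $(R\smp\under)\ci\Gamma'_{V,R,M}$ along $\under\oV TM\dashv\M(TM,\under)$ is $T_{M,N}\ci\hat g$: this is exactly the Proposition expressing a $\V$-functor's arrow map through its strength, applied to $T$ with strength $\Gamma'_{\under,R,\under}$. It is the invertibility of $\Gamma'$ from (iv) that both makes $\bra V\oV M,(V\oV\alpha)\ci(\Gamma'_{V,R,M})^{-1}\ket$ an algebra and lets the transposed algebra condition be read off as an equalizer; the remaining assertions of the Lemma reduce to faithfulness of $U$ together with the standard monad-theoretic creation of reflexive coequalizers.
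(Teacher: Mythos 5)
Your proof is correct and takes essentially the same route as the paper's: the same lifted action $V\oV\alg{M}=\bra V\oV M,(V\oV\alpha)\ci\Gamma^{\prime\,-1}_{V,R,M}\ket$, the same enriched hom defined as the equalizer of $\M(\alpha,N)$ and $\M(TM,\beta)\ci T_{M,N}$, and the same argument that the forgetful functor creates reflexive coequalizers (with preservation by $V\oV\under$ deduced from strictness plus creation). The only difference is that you spell out the adjunction transposition and the coherence axioms that the paper leaves implicit.
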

\begin{proof}
The underlying endofunctor of $T$, $R\smp \under:\M_0\to\M_0$ is a strong morphism of actegories since $\Gamma'_{\under,R,\under}$ is invertible. The monad multiplication $\mu_M=(\eps_R\smp M)\ci\gamma_{R,R,M}$ and the unit $\eta_M$ are built from 2-cells of $\V$-$\ACT$
using composition and $\smp$, so they are 2-cells. too. 
The Eilenberg-Moore category of $T$ carries a $\V$-category structure such that the forgetful functor $G:\M^T\to\M$ is a strict morphism of actegories. For a $T$-algebra $\alg{M}=\bra M,\alpha\ket$ the action of $V\in\V$ has the form
\[
V\oV\alg{M}\ =\ \left\bra V\oV M, \ T(V\oV M)\longrarr{\Gamma^{\prime -1}_{V,R,M}}V\oV TM\longrarr{V\oV\alpha}V\oV M\right\ket\,.
\]
The right adjoint of $\under\oV \alg{M}$ is just the equalizer in $\V$
\[
\M^T(\alg{M},\alg{N})\equalizer{}\M(M,N)\longerpair{\M(TM,\beta)\ci T_{M,N}}{\M(\alpha,N)}\M(TM,N)
\]
for $T$-algebras $\alg{M}=\bra M,\alpha\ket$, $\alg{N}=\bra N,\beta\ket$. 
$\M^T$ has reflexive coequalizers since $T$ preserves and therefore the forgetful functor $G$ creates them.
Finally, $V\oV\under$ preserves reflexive coequalizers in $\M$ and commutes with $T$ via $\Gamma'$, therefore using preservation and creation of reflexive coequalizers by $G$ we conclude that $V\oV\under$ preserves reflexive coequalizers in $\M^T$.
\end{proof}
The above Lemma provides 3 of the 5 conditions for $\M^T$ to be also a r2-exact skew monoidal $\V$-category. Of course, the last 2 conditions of Definition \ref{def: rex smc} are so far meaningless since a skew monoidal product on $\M^T$ is not yet defined.
The next Section provides the missing skew tensor.

\section{Skew monoidal structure on $\M^T$} \label{sec: hot}

First we consider (ordinary) skew monoidal categories, forgetting the $\V$-structure for a while.
Let $\bra\M,\smp,R,\gamma,\eta,\eps\ket$ be a skew monoidal category with coequalizers of reflexive pairs and with $\smp$ preserving these coequalizers in the 2nd argument. This implies that $\M^T$ also has reflexive coequalizers and the forgetful functor $G:\M^T\to\M$ preserves them. 

Let $\mu_{N,M}:=(\eps_N\smp M)\ci\gamma_{N,R,M}$ as in \cite{SMC} which extends the monad multiplication: $\mu_M=\mu_{R,M}$.
For each object $N$ of $\M$ and for each $T$-algebra $\alg{M}=\bra M,\alpha\ket$ we choose a coequalizer
\begin{equation}\label{def of hot'}
N\smp TM\longpair{\mu_{N,M}}{N\smp \alpha}N\smp M\longcoequalizer{\pi_{N,\alg{M}}}N\hot\alg{M} 
\end{equation}
and call its colimit object the horizontal tensor product of $N$ and $\alg{M}$. Note that the parallel pair is reflexive:
$N\smp\eta_M$ is a common section.
After having defined $\hot$ on the objects we extend its definition to a functor $\M\x\M^T\to\M$ in such a way that $\pi$ becomes a natural transformation $\pi_{N,\alg{M}}:N\smp G\alg{M}\to N\hot\alg{M}$.

If $N$ is replaced by the underlying object of a $T$-algebra $\bra N,\beta\ket$ then $N\hot\alg{M}$ becomes the underlying object of a $T$-algebra $\alg{N}\hot\alg{M}$ the action $\psi$ of which is uniquely determined by the diagram
\begin{alignat}{2}
&R\smp(N\smp TM)\longpair{R\smp\mu_{N,M}}{R\smp(N\smp \alpha)}
&R\smp(N\smp M) \longcoequalizer{R\smp\pi_{N,\alg{M}}}
&R\smp(N\hot\alg{M}) \notag\\ 
\label{def of psi}
&\quad\parbox[c]{5pt}{\begin{picture}(5,40)\put(2,40){\vector(0,-1){40}}\end{picture}}\sst(\beta\smp TM)\ci\gamma_{R,N,TM}
&\ \parbox[c]{5pt}{\begin{picture}(5,40)\put(2,40){\vector(0,-1){40}}\end{picture}}\sst(\beta\smp M)\ci\gamma_{R,N,M}\qquad
&\quad\parbox[c]{5pt}{\begin{picture}(5,40)\dashline{3}(2,40)(2,2)\put(2,2){\vector(0,-1){0}}\end{picture}}\sst\psi\\
&N\smp TM\quad\quad\longpair{\mu_{N,M}}{N\smp \alpha}
&N\smp M \longcoequalizer{\pi_{N,\alg{M}}}
&N\hot\alg{M} \notag
\end{alignat}
where on the vertical arrows one can recognize the $T$-algebra actions of $\alg{N}\ract TM$ and $\alg{N}\ract M$,
as we defined them in Example \ref{exa: M^T as M-actegory}. Therefore $\pi_{N,\alg{M}}$ can be lifted to $\M^T$ as the coequalizer
\begin{equation}\label{def of hot}
\alg{N}\ract TM\longpair{\mu_{N,M}}{N\smp \alpha}
\alg{N}\ract M\longcoequalizer{\pi_{\alg{N},\alg{M}}}\alg{N}\hot\alg{M}
\end{equation}
where the new $\pi$ is related to the old one by
\[
G\pi_{\alg{N},\alg{M}}\ =\ \pi_{G\alg{N},\alg{M}}\,.
\]

\begin{lem}
For any $N$ and for any free $T$-algebra $\bra TM,\mu_M\ket$ the tensor product $N\hot\bra TM,\mu_M\ket$ as defined in
(\ref{def of hot'}) is isomorphic to the skew monoidal product $N\smp M$.
\end{lem}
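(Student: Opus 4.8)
The plan is to realize $N\smp M$, via the generalized multiplication $\mu_{N,M}$, as a \emph{split} (hence absolute) coequalizer of exactly the reflexive pair whose chosen coequalizer defines $N\hot\alg M$ for the free algebra $\alg M=\bra TM,\mu_M\ket$; the universal property of coequalizers then produces the asserted isomorphism. Specializing the defining diagram (\ref{def of hot'}) to $\alg M=\bra TM,\mu_M\ket$ — whose underlying object is $TM$ and whose action is $\alpha=\mu_M$ — the pair reads
\[
N\smp T(TM)\ \longpair{\mu_{N,TM}}{N\smp\mu_M}\ N\smp TM\,.
\]
I claim the map $\mu_{N,M}\colon N\smp TM\to N\smp M$ is a split coequalizer of this pair.

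To set up the split fork, put $s:=N\smp\eta_M\colon N\smp M\to N\smp TM$ and $t:=N\smp T\eta_M\colon N\smp TM\to N\smp T(TM)$, and abbreviate $u:=\mu_{N,TM}$, $v:=N\smp\mu_M$, $q:=\mu_{N,M}$. The four split-fork identities to check are $q\ci u=q\ci v$, $q\ci s=\id$, $v\ci t=\id$ and $u\ci t=s\ci q$. Of these, $q\ci s=\id$ is precisely axiom (\ref{SMC4}) (with $N$, $M$ in place of its $M$, $N$); $v\ci t=\id$ is $N\smp(\under)$ applied to the monad unit law $\mu_M\ci T\eta_M=\id_{TM}$; and $u\ci t=s\ci q$ is the naturality of the transformation $\mu_{N,\under}=(\eps_N\smp\under)\ci\gamma_{N,R,\under}\colon N\smp T\under\to N\smp\under$ evaluated at the morphism $\eta_M\colon M\to TM$. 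Granting the remaining equation $q\ci u=q\ci v$, the fork is split and $q$ is an absolute coequalizer: for any $h$ with $h\ci u=h\ci v$ one computes $h=h\ci v\ci t=h\ci u\ci t=h\ci s\ci q=(h\ci s)\ci q$, and $q\ci s=\id$ forces the factor to be $h\ci s$.

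The one genuinely structural point, and the step I expect to be the main obstacle, is the coequalizing identity $q\ci u=q\ci v$, i.e.
\[
\mu_{N,M}\ci\mu_{N,TM}\ =\ \mu_{N,M}\ci(N\smp\mu_M)\,.
\]
This asserts that $\bra N\smp\under,\,\mu_{N,\under}\ket$ is a right $T$-module functor (associativity of the generalized multiplication); it is the module-associativity of $\mu_{N,M}$ from \cite{SMC}, and unwinds from the pentagon (\ref{SMC1}) for $\gamma$ together with the counit condition (\ref{SMC3}). Since a split coequalizer is absolute, $\mu_{N,M}$ is a coequalizer of the defining pair, so comparing it with the chosen coequalizer $\pi_{N,\alg M}$ of (\ref{def of hot'}) yields a unique isomorphism $N\hot\alg M\iso N\smp M$ characterized by $\theta\ci\pi_{N,\alg M}=\mu_{N,M}$. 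I would also note that, because the coequalizer here is absolute, this identification is forced and uses none of the reflexive-coequalizer hypotheses on $\smp$.
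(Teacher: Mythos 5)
Your proposal is correct and takes essentially the same route as the paper: the paper's proof consists precisely of the observation that $\mu_{N,M}$, together with the sections $N\smp\eta_M$ and $N\smp T\eta_M$, exhibits the defining pair $\bigl(\mu_{N,TM},\,N\smp\mu_M\bigr)$ as a split (hence absolute) coequalizer in $\M$, giving $N\hot\bra TM,\mu_M\ket\cong N\smp M$. You have simply made explicit the split-fork identities and their sources --- the triangle axiom (\ref{SMC4}), the monad unit law, naturality of $\mu_{N,\under}$, and the associativity $\mu_{N,M}\ci\mu_{N,TM}=\mu_{N,M}\ci(N\smp\mu_M)$ following from (\ref{SMC1}) and (\ref{SMC3}) --- which the paper leaves to the reader.
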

\begin{proof}
The statement follows from the fact that
\begin{align}\label{eq: split coeq}
&N\smp T^2M\longpair{\mu_{N,TM}}{N\smp \mu_M}N\smp TM\longerrarr{\mu_{N,M}}N\smp M\\
\notag&\qquad\quad\ \ \longerlarr{N\smp T\eta_M}\qquad\quad\, \longerlarr{N\smp\eta_M}
\end{align}
is a split coequalizer in $\M$. 
\end{proof}
Let $F:\M\to\M^T$ denote the free algebra functor, $FM=\bra TM,\mu_M\ket$. Then it follows from the Lemma that there is a natural isomorphism
\begin{equation} \label{the j}
j_{N,M}:N\hot FM\iso N\smp M\qquad\text{such that}\qquad j_{N,M}\ci\pi_{N,TM}=\mu_{N,M}
\end{equation}
for each $N,M\in\M$. Using this isomorphism the coequalizer (\ref{def of hot'}) becomes 
\begin{equation}
N\hot FTM\longpair{N\hot\mu_M}{N\hot F\alpha}N\hot FM\longcoequalizer{N\hot\alpha}N\hot\alg{M} 
\end{equation}
which shows that (\ref{def of hot'}) is, up to isomorphism, the unique functor $\hot:\M\x\M^T\to\M$ which preserves reflexive coequalizers in the 2nd argument and for which
$N\hot FM\cong N\smp M$.

By the way,
the isomorphism $N\smp M\cong N\hot\bra TM,\mu_M\ket$ is in complete agreement with the formula \cite[Eqn. (30)]{SMC} for the skew monoidal product of a bialgebroid. 
But for a perfect analogy we should be able to write $\bra TM,\mu_M\ket$ as a `vertical tensor product' $M\ot H$ where $H=R\smp R$. 
The construction of such a vertical tensor product is still an open problem.

Although one would expect $\hot$ to be a monoidal product on the category $\M^T$ of modules, this is not the case. It is only skew monoidal although its left unitor is
always invertible as we will see soon.
Whether $\hot$ can be indeed monoidal will be investigated in Section \ref{monoidality}.

\begin{thm} \label{thm: hot}
Let $\bra\M,\smp,R,\gamma,\eta,\eps\ket$ be a skew monoidal category with coequalizers of reflexive pairs and with $\smp$ preserving these coequalizers in the second argument. Then $\M^T$, the Eilenberg-Moore category of the canonical monad $T$, has reflexive coequalizers and there exist functors
\[
\M\x\M^T\to\M\,,\qquad\M^T\x\M^T\to\M^T\,,
\]
both of them denoted by $\hot$ and called the horizontal tensor product, such that
\begin{enumerate}
\item $\M^T$ with $\hot$ as skew monoidal product is a skew monoidal category;
\item $\M$ with the action $\hot:\M\x\M^T\to\M$ is a right actegory over $\M^T$;
\item for each object $M\in\M$ the functor $M\hot\under$ and for each $T$-algebra $\alg{M}\in\M^T$ the functor $\alg{M}\hot\under$ preserves reflexive coequalizers;
\item the left unitor $\bar\eta$ of the skew monoidal product $\hot$ on $\M^T$ is invertible therefore the category 
of modules over $\M^T$ is trivial.
\end{enumerate}
\end{thm}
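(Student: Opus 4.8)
The plan is to build the skew monoidal data $\langle\M^T,\hot,\bar R,\bar\gamma,\bar\eta,\bar\eps\rangle$ from the corresponding data of $\M$ by descent along the coequalizers $\pi_{\alg{N},\alg{M}}$, exploiting two facts established before the theorem: every $T$-algebra $\alg{M}=\bra M,\alpha\ket$ is the reflexive coequalizer of its canonical free presentation $FTM\rightrightarrows FM\to\alg{M}$ (common section $F\eta_M$), and by construction $\hot$ preserves such coequalizers in its second variable while $N\hot FM\cong N\smp M$ via the natural isomorphism $j$ of \eqref{the j}. First I fix the unit $\bar R:=\bra R,\eps_R\ket$, which is a $T$-algebra because $\eps_R\ci\eta_R=R$ by \eqref{SMC5} and $\eps_R\ci T\eps_R=\eps_R\ci\mu_R$ follows from \eqref{SMC3} at $M=N=R$ together with naturality of $\eps$ (applied to the arrow $\eps_R:R\smp R\to R$).

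Next I define the three coherence morphisms by descent. For the left unitor I observe that the defining coequalizer of $\bar R\hot\alg{M}$ is precisely the $G$-image of the canonical presentation, i.e.\ the split coequalizer $T^2M\rightrightarrows TM\xrightarrow{\alpha}M$ presenting $\alg M$; hence $\alpha$ induces an isomorphism of $T$-algebras $\bar R\hot\alg{M}\iso\alg M$ (the algebra structure matching via \eqref{def of psi}), whose inverse I take to be $\bar\eta_{\alg M}$. Since the canonical monad of the skew monoidal $\M^T$ is $\bar T=\bar R\hot\under$ with unit $\bar\eta$, its invertibility makes $\bar T$ isomorphic to the identity monad, so the modules over $\M^T$ form a trivial category (isomorphic to $\M^T$), which is exactly (iv). For the right unitor I check that $\eps_M:M\smp R\to M$ coequalizes the pair $\mu_{M,R},\,M\smp\eps_R$ (again by \eqref{SMC3} and naturality of $\eps$), so it factors through $\pi_{\alg M,\bar R}$ to give $\bar\eps_{\alg M}:\alg M\hot\bar R\to\alg M$. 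For the associator I show that $\gamma_{L,M,N}$ is compatible with the nested coequalizers presenting the two sides of $\bar\gamma_{\alg L,\alg M,\alg N}:\alg L\hot(\alg M\hot\alg N)\to(\alg L\hot\alg M)\hot\alg N$, so it descends to a unique such morphism; $T$-linearity of every induced map follows from the uniqueness clause of the universal property.

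With the data in hand, (i) amounts to verifying the five axioms \eqref{SMC1}--\eqref{SMC5} for $\bar\gamma,\bar\eta,\bar\eps$. I would check each by precomposing with the jointly epic projections $\pi$ and, where a slot is a \emph{second} argument of $\hot$, further resolving it into free algebras via $j$; on frees $\hot$ reduces to the skew tensor ($\alg N\hot FM\cong\alg N\ract M$, the action of Example~\ref{exa: M^T as M-actegory}), and each axiom collapses to the corresponding identity of $\M$. Claim (ii) is the same descent with the first slot taken in $\M$: the identities \eqref{ract-1}--\eqref{ract-3} reduce, on free algebras, to \eqref{SMC1}, \eqref{SMC3} and \eqref{SMC4}. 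Claim (iii) for the $\M$-valued $\hot$ is the second-variable reflexive-coequalizer preservation built into the construction, and I lift it to the $\M^T$-valued $\hot$ exactly as in Lemma~\ref{lem: M^T is V-cat}, using that $G:\M^T\to\M$ creates and preserves reflexive coequalizers and that the algebra structures commute with the relevant functors.

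The main obstacle is the associator. Because the hypothesis supplies preservation of reflexive coequalizers by $\smp$ (hence by $\hot$) only in the \emph{second} variable, I cannot resolve the first or the middle slot of $\bar\gamma_{\alg L,\alg M,\alg N}$ into free algebras; those slots must be treated directly through the underlying-object maps and the naturality of $\gamma$ and $\pi$. Thus both the well-definedness of $\bar\gamma$ (that $\gamma$ descends simultaneously through the two nested coequalizers on each side) and the pentagon \eqref{SMC1} for $\bar\gamma$ require careful bookkeeping of which of the four slots is being coequalized at each stage, resolving only the genuine second-argument slots via $j$ while propagating the remaining slots by naturality. This is where essentially all of the real work is concentrated; once the pentagon is secured, the remaining axioms and claims (ii)--(iv) are comparatively routine.
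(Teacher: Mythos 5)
Your setup matches the paper's proof almost exactly: the unit $\alg{R}=\bra R,\eps_R\ket$, the right unitor obtained by factoring $\eps_M$ through $\pi_{\alg M,\alg R}$, the left unitor (the paper defines $\bar\eta_{\alg M}:=\pi_{R,\alg M}\ci\eta_M$ and proves invertibility by comparing the canonical presentation of $\alg M$ with the coequalizer defining $R\hot\alg M$ --- your $\bar\eta$ is literally the inverse of the same isomorphism $i$, since $i\ci\pi_{R,\alg M}\ci\eta_M=\alpha\ci\eta_M=\id_M$), and your treatment of (ii)--(iv) are all sound and follow the paper's route. The genuine gap is exactly where you say ``essentially all of the real work is concentrated'': the associator. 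Two concrete devices are missing, and without them the proposal does not prove (i). First, well-definedness of $\bar\gamma_{N,\alg M,\alg L}$ requires exhibiting the nested quotient $N\hot(\alg M\hot\alg L)$ as the coequalizer of a \emph{single} reflexive pair with vertex $N\smp(M\smp L)$; the paper gets this from the $3\times 3$ diagram (\ref{D1}), all of whose rows and columns are reflexive coequalizers, via the Diagonal Lemma of \cite{Johnstone-TT}. Your phrase ``compatible with the nested coequalizers presenting the two sides'' glosses over a crucial asymmetry: on the target side $(N\hot\alg M)\hot\alg L$ the analogous rows involve $\under\smp TL$, a \emph{first} argument of $\smp$, so they are merely commuting forks, not coequalizers (diagram (\ref{D2})); the construction succeeds only because one needs a coequalizer presentation of the source and just a compatible cone on the target, and this one-sidedness has to be recognized and exploited, not assumed away.

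Second, and more seriously, the pentagon. After precomposing with the available epimorphisms (and resolving the last slot into free algebras, which is the only slot where this is legitimate, as you correctly note), what remains to be verified is not an instance of the pentagon for $\gamma$ propagated by naturality: it is the pentagon for the deformed associator $\gamma^2_{L,M,N}=\gamma_{L,TM,TN}\ci(L\smp\gamma_{R,M,TN})$ of the auxiliary product $M\smp^2 N:=M\smp TN$ --- precisely the left vertical arrow of (\ref{def of bargamma}). The paper isolates this as Lemma \ref{lem: smp^2}: the data $\bra\M,\smp^2,R,\gamma^2,\eta^2,\eps^2\ket$ satisfy four of the five skew monoidal axioms (the $\eps\gamma\eta$-triangle fails), and in particular $\gamma^2$ satisfies the pentagon as an identity purely in $\M$; only then does epi-descent along the maps $\pi\ci(1\smp\pi)$ yield (\ref{hot pentagon}). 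Your plan of ``careful bookkeeping \dots{} propagating the remaining slots by naturality'' never identifies this target identity in $\M$, and identifying and proving it (or an equivalent of Lemma \ref{lem: smp^2}) is the missing mathematical step, not a routine verification.
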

\begin{proof}
The action $\hot:\M\x\M^T\to\M$ is defined by the coequalizer (\ref{def of hot'}). 
Its associativity constraint can be obtained as follows. Let $N$ be an object in $\M$ and let $\alg{M}=\bra M,\alpha\ket$ and $\alg{L}=\bra L,\beta\ket$ be objects of $\M^T$. We need presentations for both the source and target of an arrow $N\hot(\alg{M}\hot\alg{L})\to(N\hot\alg{M})\hot\alg{L}$.
Consider the following two 3x3 diagrams with all columns, and in the case of the first also the rows, being reflexive coequalizers:
\begin{equation}\label{D1}
\parbox{300pt}{
\begin{picture}(300,170)(0,12)
\put(0,165){$N\smp T(M\smp TL)$}
\put(80,166){\vector(1,0){40}} \put(80,155){$\sst N\smp T(M\smp\beta)$}
\put(80,168){\vector(1,0){40}} \put(83,175){$\sst N\smp T\mu_{M,L}$}
\put(130,165){$N\smp T(M\smp L)$}
\put(200,167){\vector(1,0){40}} \put(202,174){$\sst N\smp T\pi_{M,\alg{L}}$}
\put(250,165){$N\smp T(M\hot\alg{L})$}

\put(38,150){\vector(0,-1){40}} \put(28,130){$\sst\mu$}
\put(41,150){\vector(0,-1){40}} \put(45,130){$\sst 1\smp[(\alpha\smp 1)\ci\gamma]$}
\put(158,150){\vector(0,-1){40}} \put(148,130){$\sst\mu$}
\put(161,150){\vector(0,-1){40}} \put(165,130){$\sst 1\smp[(\alpha\smp 1)\ci\gamma]$}
\put(278,150){\vector(0,-1){40}} \put(268,130){$\sst\mu$}
\put(281,150){\vector(0,-1){40}} \put(285,130){$\sst N\smp \psi$}

\put(5,90){$N\smp(M\smp TL)$}
\put(80,91){\vector(1,0){40}} \put(81,80){$\sst N\smp (M\smp\beta)$}
\put(80,93){\vector(1,0){40}} \put(83,100){$\sst N\smp \mu_{M,L}$}
\put(130,90){$N\smp (M\smp L)$}
\put(200,92){\vector(1,0){40}} \put(202,99){$\sst N\smp \pi_{M,\alg{L}}$}
\put(250,90){$N\smp (M\hot\alg{L})$}

\put(39,73){\vector(0,-1){30}} \put(43,58){$\sst\pi$}
\put(159,73){\vector(0,-1){30}} \put(163,58){$\sst\pi$}
\put(279,73){\vector(0,-1){30}} \put(283,58){$\sst\pi$}

\put(0,22){$N\hot(\alg{M}\ract TL)$}
\put(80,23){\vector(1,0){40}} \put(81,12){$\sst N\hot (M\smp\beta)$}
\put(80,25){\vector(1,0){40}} \put(83,32){$\sst N\hot \mu_{M,L}$}
\put(130,22){$N\hot (\alg{M}\ract L)$}
\put(200,24){\vector(1,0){40}} \put(202,31){$\sst N\hot \pi_{M,\alg{L}}$}
\put(250,22){$N\hot (\alg{M}\hot\alg{L})$}

\end{picture}}
\end{equation}
and
\begin{equation} \label{D2}
\parbox{300pt}{
\begin{picture}(300,170)(0,12)
\put(0,165){$(N\smp TM)\smp TL$}
\put(80,166){\vector(1,0){40}} \put(80,155){$\sst (N\smp\alpha) \smp TL$}
\put(80,168){\vector(1,0){40}} \put(83,175){$\sst \mu_{N,M} \smp TL$}
\put(130,165){$(N\smp M)\smp TL$}
\put(200,167){\vector(1,0){40}} \put(202,174){$\sst \pi_{N,\alg{M}} \smp TL$}
\put(250,165){$(N\hot\alg{M}) \smp TL$}

\put(38,150){\vector(0,-1){40}} \put(28,130){$\sst\mu$}
\put(41,150){\vector(0,-1){40}} \put(45,130){$\sst 1\smp\beta$}
\put(158,150){\vector(0,-1){40}} \put(148,130){$\sst\mu$}
\put(161,150){\vector(0,-1){40}} \put(165,130){$\sst 1\smp\beta$}
\put(278,150){\vector(0,-1){40}} \put(268,130){$\sst \mu$}
\put(281,150){\vector(0,-1){40}} \put(285,130){$\sst 1\smp\beta$}

\put(5,90){$(N\smp TM)\smp L$}
\put(80,91){\vector(1,0){40}} \put(81,80){$\sst (N\smp \alpha)\smp L$}
\put(80,93){\vector(1,0){40}} \put(83,100){$\sst \mu_{N,M}\smp L$}
\put(130,90){$(N\smp M)\smp L$}
\put(200,92){\vector(1,0){40}} \put(202,99){$\sst \pi_{N,\alg{M}}\smp L$}
\put(250,90){$(N\hot\alg{M}) \smp L$}

\put(39,73){\vector(0,-1){30}} \put(43,58){$\sst\pi$}
\put(159,73){\vector(0,-1){30}} \put(163,58){$\sst\pi$}
\put(279,73){\vector(0,-1){30}} \put(283,58){$\sst\pi$}

\put(0,22){$(N\smp TM)\hot\alg{L}$}
\put(80,23){\vector(1,0){40}} \put(81,12){$\sst (N\smp\alpha)\hot \alg{L}$}
\put(80,25){\vector(1,0){40}} \put(83,32){$\sst \mu_{N,M}\hot\alg{L}$}
\put(130,22){$(N\smp M)\hot \alg{L}$}
\put(200,24){\vector(1,0){40}} \put(202,31){$\sst \pi_{N,\alg{M}}\hot\alg{L}$}
\put(250,22){$(N\hot \alg{M})\hot\alg{L}$}

\end{picture}}
\end{equation}
Using the Diagonal Lemma \cite[p.4]{Johnstone-TT}  the diagonal of (\ref{D1}) is a coequalizer. This is the first row of the next diagram.  The diagonal of (\ref{D2}) is just a "commuting" fork. It appears in the second row of
\begin{equation}\label{def of bargamma}
\parbox{300pt}{
\begin{picture}(300,120)
\put(0,95){$N\smp T(M\smp TL)$}
\put(80,97){\vector(1,0){50}} \put(82,85){$\sst 1\smp[(\alpha\smp\beta)\ci\gamma]$}
\put(80,99){\vector(1,0){50}} \put(85,105){$\sst\mu\ci(1\smp T\mu)$}
\put(140,95){$N\smp(M\smp L)$}
\put(210,98){\vector(1,0){50}} \put(220,105){$\sst \pi\ci(1\smp\pi)$}
\put(270,95){$N\hot(\alg{M}\hot\alg{L})$}

\put(20,80){\vector(0,-1){50}} \put(25,54){$\sst\gamma\ci(1\smp\gamma)$}
\put(160,80){\vector(0,-1){50}} \put(165,54){$\sst\gamma$} 
\dashline{3}(290,80)(290,32)\put(290,32){\vector(0,-1){0}} \put(295,54){$\sst\bar\gamma$} 

\put(0,10){$(N\smp TM)\smp TL$}
\put(80,12){\vector(1,0){50}} \put(90,0){$\sst (1\smp\alpha)\smp\beta$}
\put(80,14){\vector(1,0){50}} \put(90,20){$\sst\mu\ci(\mu\smp 1)$}
\put(140,10){$(N\smp M)\smp L$}
\put(210,13){\vector(1,0){50}} \put(220,20){$\sst \pi\ci(\pi\smp 1)$}
\put(270,10){$(N\hot\alg{M})\hot\alg{L}$}

\end{picture}}
\end{equation}
with the unique arrow on the right hand side defining the associator $\bar\gamma_{N,\alg{M},\alg{L}}$
we have been looking for. In order to facilitate the proof of the pentagon relation for $\bar\gamma$ let us observe that the
left vertical arrow in (\ref{def of bargamma}) looks like an associator for a new skew monoidal product. This is almost the case.
\begin{lem} \label{lem: smp^2}
If $\bra\M,\smp,R,\gamma,\eta,\eps\ket$ is a skew monoidal category then the data
\begin{align}
M\smp^2 N&:=M\smp TN\\
R^2&:=R\\
\gamma^2_{L,M,N}&:=\gamma_{L,TM,TN}\ci(L\smp\gamma_{R,M,TN})\\
\eta^2_M&:=\eta_{TM}\ci\eta_M\\
\eps^2_M&:=\eps_M\ci(M\smp\eps_R)
\end{align}
obey 4 of the 5 skew monoidal category axioms: The failing axiom is the $\eps\gamma\eta$-triangle (\ref{SMC4}). But if $\eta$ is epi then 
$\bra\M,\smp^2,R^2,\gamma^2,\eta^2,\eps^2\ket$ is a skew monoidal category.
\end{lem}
Continuing with the proof of the Theorem we need only one consequence of the Lemma, namely that $\gamma^2$ satisfies the pentagon equation. Since $\hot$ preserves reflexive coequalizers it is now easy to show that $\bar\gamma$, too, satisfies the pentagon equation
\begin{equation}\label{hot pentagon}
(\bar\gamma_{N,\alg{M},\alg{L}}\hot\alg{K})\ci\bar\gamma_{N,\alg{M}\hot\alg{L},\alg{K}}\ci
(N\hot\bar\gamma_{\alg{M},\alg{L},\alg{K}})=
\bar\gamma_{N\hot\alg{M},\alg{L},\alg{K}}\ci\bar\gamma_{N,\alg{M},\alg{L}\hot\alg{K}}
\end{equation}
where $\bar\gamma_{\alg{M},\alg{L},\alg{K}}:\alg{M}\hot(\alg{L}\hot\alg{K})\to(\alg{M}\hot\alg{L})\hot\alg{K}$
is the lifting of $\bar\gamma_{M,\alg{L},\alg{K}}$ to $\M^T$. Such a lifting exists since not only $\mu_{N,M}$, $N\smp\alpha$
and $\pi_{N,\alg{M}}$ can be lifted by (\ref{def of hot}) but also $\gamma$ and $\gamma^2$ by (\ref{lifted gamma}).
This lifted $\bar\gamma$ is nothing but the associator for $\hot$ as a skew monoidal product. 

The next task is to study the unitors and their coherence conditions.
As for the unit object for $\hot$ we define it to be the $T$-algebra $\alg{R}=\bra R,\eps_R\ket$. 

The right unitor $\bar\eps_N:N\hot\alg{R}\to N$ can be obtained from unique factorization of $\eps_N$ through the coequalizer $\pi_{N,\alg{R}}$
\begin{equation}\label{def of bareps}
\bar\eps_N\ci\pi_{N,\alg{R}}\ =\ \eps_N\,,\qquad N\in\ob\M
\end{equation}
since $\eps_N\ci\mu_{N,R}=\eps_N\ci(N\smp\eps_R)$ is an identity in any skew monoidal category.
For $\alg{N}=\bra N,\alpha\ket$ we can define $\bar\eps_{\alg{N}}:\alg{N}\hot\alg{R}\to \alg{N}$ simply as the lift to $\M^T$
of $\bar\eps_N$ since in the diagram
\[
\begin{CD}
R\smp(N\smp R)@>T\pi>>R\smp(N\hot \alg{R})@>T\bar\eps_N>>R\smp N\\
@V{(\alpha\smp 1)\ci\gamma}VV @VV{\psi}V @VV{\alpha}V\\
N\smp R@>\pi>>N\hot \alg{R}@>\bar\eps_N>> N
\end{CD}
\]
the left square is commutative because of (\ref{def of psi}), the outer rectangle is commutative because
$\eps_N\ci(\alpha\smp R)\ci\gamma_{R,N,R}=\alpha\ci\eps_{TN}\ci\gamma_{R,N,R}=\alpha\ci T\eps_N$ is an identity
and $R\smp\pi_{N,\alg{R}}$ is epimorphic; so commutativity of the right square follows.

In order to prove the triangle relation (\ref{SMC3}) for $\hot$ we substitute $\alg{L}=\alg{R}$ in (\ref{def of bargamma}) and
compose the second row with $\bar\eps$. The calculation
\begin{align*}
&\bar\eps_{N\hot\alg{M}}\ci\bar\gamma_{N,\alg{M},\alg{R}}\ci\pi_{N,\alg{M}\hot\alg{R}}\ci(N\smp\pi_{M,\alg{R}})=\\
&\bar\eps_{N\hot\alg{M}}\ci\pi_{N\hot\alg{M},\alg{R}}\ci(\pi_{N,\alg{M}}\smp R)\ci\gamma_{N,M,R}=\\
&\eps_{N\hot\alg{M}}\ci(\pi_{N,\alg{M}}\smp R)\ci\gamma_{N,M,R}=\\
&\pi_{N,\alg{M}}\ci\eps_{N\smp M}\ci\gamma_{N,M,R}=\\
&\pi_{N,\alg{M}}\ci(N\smp\eps_M)=\\
&\pi_{N,\alg{M}}\ci(N\smp\bar\eps_M)\ci(N\smp\pi_{M,\alg{R}})=\\
&(N\hot\bar\eps_{\alg{M}})\ci\pi_{N,\alg{M}\hot\alg{R}}\ci(N\smp\pi_{M,\alg{R}})
\end{align*}
proves the coherence condition
\begin{equation}\label{hot' SMC3}
\bar\eps_{N\hot\alg{M}}\ci\bar\gamma_{N,\alg{M},\alg{R}}\ =\ N\hot\bar\eps_{\alg{M}}\,.
\end{equation}
Lifting to $\M^T$ we obtain for all $T$-algebras $\alg{N}$ and $\alg{M}$ that
\begin{equation}\label{hot SMC3}
\bar\eps_{\alg{N}\hot\alg{M}}\ci\bar\gamma_{\alg{N},\alg{M},\alg{R}}\ =\ \alg{N}\hot\bar\eps_{\alg{M}}\,.
\end{equation}

The left unitor $\bar\eta_{\alg{M}}:\alg{M}\to \alg{R}\hot\alg{M}$ is defined simply by the equation
\begin{equation}\label{def of bareta}
\eta_{\alg{M}}\ :=\ \pi_{R,\alg{M}}\ci\eta_M
\end{equation}
which happens to be a $T$-algebra morphism in spite of that $\eta_M$ is not. The outer rectangle of 
\[
\begin{CD}
TM@>T\eta_M>>T^2M@>T\pi_{R,\alg{M}}>>T(R\hot \alg{M})\\
@V{\alpha}V{\qquad\ \bullet}V @VV{\mu_M}V @VV{\psi}V\\
M@>\eta_M>>TM@>\pi_{R,\alg{M}}>>R\hot\alg{M}
\end{CD}
\]
commutes although the left square does not. 

The proof of the $\bar\eta$-triangle
\begin{equation}\label{hot SMC2}
\bar\gamma_{\alg{R},\alg{M},\alg{N}}\ci\bar\eta_{\alg{M}\hot\alg{N}}\ =\ \bar\eta_{\alg{M}}\hot\alg{N}
\end{equation}
goes as follows:
\begin{align*}
&\bar\gamma_{\alg{R},\alg{M},\alg{N}}\ci\bar\eta_{\alg{M}\hot\alg{N}}\ci\pi_{M,\alg{N}}=\\
&\bar\gamma_{\alg{R},\alg{M},\alg{N}}\ci\pi_{R,\alg{M}\hot\alg{N}}\ci\eta_{M\hot\alg{N}}\ci\pi_{M,\alg{N}}=\\
&\bar\gamma_{\alg{R},\alg{M},\alg{N}}\ci\pi_{R,\alg{M}\hot\alg{N}}\ci(R\smp\pi_{M,\alg{N}})\ci\eta_{M\smp N}
\eqby{def of bargamma}\\
&\pi_{R\ot\alg{M},\alg{N}}\ci(\pi_{R,\alg{N}}\smp N)\ci\gamma_{R,M,N}\ci\eta_{M\smp N}=\\
&\pi_{R\ot\alg{M},\alg{N}}\ci(\pi_{R,\alg{N}}\smp N)\ci(\eta_M\smp N)=\\
&\pi_{R\ot\alg{M},\alg{N}}\ci(\bar\eta_{\alg{M}}\smp N)=\\
&(\bar\eta_{\alg{M}}\hot\alg{N})\ci\pi_{M,\alg{N}}\,.
\end{align*}

The mixed triangle for $\hot$, i.e., the coherence condition
\begin{equation}\label{hot' SMC4}
(\bar\eps_N\hot \alg{M})\ci\bar\gamma_{N,\alg{R},\alg{M}}\ci(N\hot\bar\eta_{\alg{M}})\ =\ N\hot\alg{M}
\end{equation}
and its lifted version
\begin{equation}\label{hot SMC4}
(\bar\eps_{\alg{N}}\hot \alg{M})\ci\bar\gamma_{\alg{N},\alg{R},\alg{M}}\ci(\alg{N}\hot\bar\eta_{\alg{M}})\ =\ \alg{N}\hot\alg{M}
\end{equation}
can be obtained as follows:
\begin{align*}
&(\bar\eps_N\hot \alg{M})\ci\bar\gamma_{N,\alg{R},\alg{M}}\ci(N\hot\bar\eta_{\alg{M}})\ci\pi_{N,\alg{M}}=\\
&(\bar\eps_N\hot \alg{M})\ci\bar\gamma_{N,\alg{R},\alg{M}}\ci\pi_{N,\alg{R}\hot\alg{M}}\ci(N\smp\pi_{R,\alg{M}})
\ci(N\smp\eta_M)=\\
&(\bar\eps_N\hot \alg{M})\ci
\underset{(\pi_{N,\alg{R}}\hot\alg{M})\ci\pi_{N\smp R,\alg{M}}}
{\underbrace{\pi_{N\hot\alg{R},\alg{M}}\ci(\pi_{N,\alg{R}}\smp M)}}
\ci\gamma_{N,R,M}\ci(N\smp\eta_M)=\\
&(\eps_N\hot \alg{M})\ci\pi_{N\smp R,\alg{M}}\ci\gamma_{N,R,M}\ci(N\smp\eta_M)=\\
&\pi_{N,\alg{M}}\ci(\eps_N\smp M)\ci\gamma_{N,R,M}\ci(N\smp\eta_M)=\\
&\pi_{N,\alg{M}}\,.
\end{align*}

Finally, the axiom (5) for $\hot$, i.e., the relation
\begin{equation}\label{hot SMC5}
\bar\eps_{\alg{R}}\ci\bar\eta_{\alg{R}}\ =\ \alg{R}
\end{equation}
follows immediately from (\ref{def of bareta}), (\ref{def of bareps}) and (\ref{SMC5}).
In this way the data $\bra\M^T,\hot,\alg{R},\bar\gamma,\bar\eta,\bar\eps\ket$ is a skew monoidal category by the lifted version of (\ref{hot pentagon}) and by (\ref{hot SMC2}), (\ref{hot SMC3}), (\ref{hot SMC4}) and (\ref{hot SMC5}). This proves 
(i). The three axioms of Definition \ref{def: skew-V-act} for $\M$ to be a right $\M^T$-actegory are provided by
(\ref{hot pentagon}), (\ref{hot' SMC3}) and (\ref{hot' SMC4}). This proves (ii).

It should be clear from (\ref{def of hot'}) and (\ref{def of hot}), that $\hot$, both as a tensor product and as an action, preserves
all type of colimits that are preserved by $\smp$. Under the present assumptions this means that $\hot$ preserves at least the reflexive coequalizers in the 2nd argument. This proves (iii).

For any $T$-algebra $\alg{M}=\bra M,\alpha\ket$ the canonical presentation
\[
T^2 M\pair{\mu_M}{T\alpha}TM\coequalizer{\alpha}M
\]
and the definition of the tensor $R\hot\alg{M}$
\[
R\smp TM\pair{\mu_{R,M}}{R\smp\alpha}R\smp M\coequalizer{\pi_{R,\alg{M}}}R\hot\alg{M}
\]
are coequalizers of the same reflexive pair. Therefore there exists an isomorphism $i$ such that $i\ci\pi_{R,\alg{M}}=\alpha$.
By the definition of $\bar\eta$ in (\ref{def of bareta})
\[
i\ci\bar\eta_{\alg{M}}=i\ci\pi_{R,\alg{M}}\ci\eta_M=\alpha\ci\eta_M=M
\]
hence $\bar\eta_{\alg{M}}$ is an isomorphism. Now an algebra for the monad $\alg{R}\hot\under$ with underlying object,
let us say, $\alg{M}$ has unique action $\bar\eta^{-1}_{\alg{M}}:\alg{R}\hot\alg{M}\to\alg{M}$. This means that the category of
modules over $\M^T$ is $\M^T$ itself. This proves (iv).
\end{proof}

\begin{pro} \label{pro: G}
Under the assumptions of Theorem \ref{thm: hot} the forgetful functor $G:\M^T\to\M$, $\bra M,\alpha\ket\to M$,
is a strict morphism of right $\M^T$-actegories and has the structure of a strictly normal skew monoidal functor.
\end{pro}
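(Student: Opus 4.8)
The plan is to read off both structures from the coequalizer $\pi$ and then observe that every coherence condition to be checked has already been proved, essentially verbatim, in the proof of Theorem~\ref{thm: hot}. First I would fix the data. On objects $G$ is literally ``forget the action'', so $G\alg{R}=R$ and $G\alg{M}\smp G\alg{N}=M\smp N$, while by the lifting relation $G\pi_{\alg{N},\alg{M}}=\pi_{G\alg{N},\alg{M}}$ recorded just after (\ref{def of hot}) the underlying object of the $\M^T$-tensor is $G(\alg{M}\hot\alg{N})=M\hot\alg{N}$. I therefore take as skew monoidal comparison the coequalizer projection
\[
G_2^{\alg{M},\alg{N}}:=\pi_{M,\alg{N}}:G\alg{M}\smp G\alg{N}\to G(\alg{M}\hot\alg{N}),
\]
and as unit comparison $G_0:=\id_R:R\to G\alg{R}$. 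Since $G_0$ is an identity, $G$ will be strictly normal as soon as it is shown to be skew monoidal; joint naturality of $G_2$ in $\alg{M}$ and $\alg{N}$ is exactly the naturality of $\pi$ established when it was promoted to a natural transformation and lifted to $\M^T$.

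Next I would match the three skew monoidal functor axioms to the equations defining $\bar\gamma$, $\bar\eta$ and $\bar\eps$. With $G_0=\id$ and using that $G$ of a morphism lifted to $\M^T$ is the corresponding $\M$-level morphism (so $G\bar\gamma_{\alg{M},\alg{N},\alg{L}}=\bar\gamma_{M,\alg{N},\alg{L}}$, $G\bar\eta_{\alg{M}}=\pi_{R,\alg{M}}\ci\eta_M$, $G\bar\eps_{\alg{M}}=\bar\eps_M$), the associativity axiom becomes
\[
\bar\gamma_{M,\alg{N},\alg{L}}\ci\pi_{M,\alg{N}\hot\alg{L}}\ci(M\smp\pi_{N,\alg{L}})
=\pi_{M\hot\alg{N},\alg{L}}\ci(\pi_{M,\alg{N}}\smp L)\ci\gamma_{M,N,L},
\]
which is precisely the relation defining $\bar\gamma$ displayed in (\ref{def of bargamma}); the left-unit axiom reduces to $G\bar\eta_{\alg{M}}=\pi_{R,\alg{M}}\ci\eta_M$, which is (\ref{def of bareta}); and the right-unit axiom reduces to $\bar\eps_M\ci\pi_{M,\alg{R}}=\eps_M$, which is (\ref{def of bareps}). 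Thus all three hold by construction, with no further calculation.

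Finally I would treat the actegory statement. Here $\M^T$ is the right actegory over itself and $\M$ is the right $\M^T$-actegory of part (ii) of Theorem~\ref{thm: hot}, whose associator and unitor are $\bar\gamma_{N,\alg{M},\alg{L}}$ and $\bar\eps_N$. The strength of $G$ as a morphism of right $\M^T$-actegories is a map $G\alg{M}\hot\alg{K}\to G(\alg{M}\hot\alg{K})$; but both sides are the single object $M\hot\alg{K}$ (the $\M$-action on the left, the underlying object of the $\M^T$-tensor on the right, equal by $G\pi=\pi$), so the strength is the identity and $G$ is strict. With identity strength the two coherence conditions for a morphism of right $\M^T$-actegories (the right-handed analogues of (\ref{strength-1}) and (\ref{strength-2})) collapse to $G\bar\gamma_{\alg{M},\alg{K},\alg{L}}=\bar\gamma_{M,\alg{K},\alg{L}}$ and $G\bar\eps_{\alg{M}}=\bar\eps_M$, which again hold because these $\M^T$-morphisms are by definition lifts of their $\M$-level counterparts.

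I expect no genuine obstacle here: the content is entirely \emph{bookkeeping}. The only points requiring care are getting the variance and the order of composition in the two unit axioms right, and confirming that $G$ commutes with the liftings of (\ref{def of hot}), (\ref{lifted gamma}) and (\ref{lifted eps}), so that each functor axiom is literally one of (\ref{def of bargamma}), (\ref{def of bareta}), (\ref{def of bareps}) rather than merely equivalent to it.
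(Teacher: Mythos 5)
Your proposal is correct and follows essentially the same route as the paper: the multiplicativity constraint is the coequalizer projection $\pi$, the unit constraint is the identity, the associativity hexagon is read off as the right square of (\ref{def of bargamma}), the unit axioms collapse to the defining equations (\ref{def of bareta}) and (\ref{def of bareps}), and strictness as a right $\M^T$-actegory morphism comes from the lifting (\ref{def of psi}) giving $G(\alg{N}\hot\alg{M})=G\alg{N}\hot\alg{M}$ on the nose.
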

\begin{proof}
The way we lifted $N\hot\alg{M}$ to $\M^T$ in (\ref{def of psi}) ensures that
\[
G(\alg{N}\hot\alg{M})\ =\ G\alg{N}\hot \alg{M}
\]
which is precisely the statement that the strength of $G$ as a right $\M^T$-actegory morphism is the identity.
But more interesting is that the coequalizer in (\ref{def of hot'}) can be read as a multiplicativity constraint for $G$,
\[
G_{\alg{N},\alg{M}}\ :=\ \pi_{G\alg{N},\alg{M}}\ :\ G\alg{N}\smp G\alg{M}\to G(\alg{N}\hot\alg{M})\,.
\]
The unit constraint is the identity $R=G\alg{R}$. The right square in (\ref{def of bargamma}) is just the hexagon expressing
associativity of $G_{\alg{N},\alg{M}}$ and the unitality tetragons reduce to triangles which are but the definitions 
(\ref{def of bareta}) and (\ref{def of bareps}) of $\bar\eta$ and $\bar\eps$, respectively.
\end{proof}

Combining the results of the above Theorem and Proposition with the $\V$-actegory structure we obtain the following.

\begin{thm} \label{thm: V-hot}
Let $\M$ be a r2-exact skew monoidal $\V$-category. Then $\M^T$ is also a r2-exact skew monoidal $\V$-category
and the forgetful functor $G:\M^T\to\M$ is a skew monoidal $\V$-functor which is strict as an actegory morphism.
\end{thm}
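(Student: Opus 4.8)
The plan is to assemble the ordinary (non-enriched) results already in hand and to supply the remaining $\V$-enrichment. Lemma~\ref{lem: M^T is V-cat} already gives conditions (i)--(iii) of Definition~\ref{def: rex smc} for $\M^T$: it is a tensored $\V$-category, it has reflexive coequalizers, and each $V\oV\under$ preserves them. Condition (v), preservation of reflexive coequalizers by $\alg{M}\hot\under$, is precisely Theorem~\ref{thm: hot}(iii), and the equational content of the skew monoidality axioms for $\bra\M^T,\hot,\alg{R},\bar\gamma,\bar\eta,\bar\eps\ket$ is Theorem~\ref{thm: hot}(i). Hence the genuinely new points are three: to promote $\hot$ to a $2$-variable morphism of $\V$-actegories by producing strengths $\bar\Gamma,\bar\Gamma'$; to verify that $\bar\gamma,\bar\eta,\bar\eps$ and the multiplicativity constraint $G_{\alg{N},\alg{M}}=\pi_{G\alg{N},\alg{M}}$ of Proposition~\ref{pro: G} are transformations of $\V$-actegory morphisms; and to establish condition (iv), invertibility of $\bar\Gamma'$.

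For the strengths I would argue straight from the defining coequalizer~(\ref{def of hot'}). Since $\M$ is r2-exact, each $V\oV\under$ preserves that reflexive coequalizer, so by \S\ref{ss: colim} it is a $\V$-colimit; consequently $V\oV(N\hot\alg{M})$ is the coequalizer of the $V\oV\under$-image pair, while $N\hot(V\oV\alg{M})$ and $(V\oV N)\hot\alg{M}$ are the corresponding coequalizers on the target side. The skew monoidal $\V$-actegory structure of $\M$ supplies $\V$-natural strengths $\Gamma_{V,N,\under}$ and $\Gamma'_{V,N,\under}$ of $\smp$, which by the coherence identities relating $\Gamma'$ with $\gamma$ and $\eta$ (namely (\ref{sma-13})--(\ref{sma-15})) intertwine the two parallel pairs; the universal property then induces $\bar\Gamma_{V,N,\alg{M}}$ and $\bar\Gamma'_{V,N,\alg{M}}$, which I would lift to $\M^T$ exactly as in Theorem~\ref{thm: hot}. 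Because the coequalizer~(\ref{def of hot'}) is a $\V$-colimit, its cocone $\pi$ is a transformation of actegory morphisms; therefore every arrow obtained by coequalizing $\V$-natural building blocks ($\gamma,\eta,\eps,\Gamma,\Gamma'$) through $\pi$---in particular $\bar\gamma,\bar\eta,\bar\eps$ and $G_{\alg{N},\alg{M}}$---is automatically $\V$-natural. Together with Lemma~\ref{lem: M^T is V-cat} (which makes $G$ a $\V$-functor strict as an actegory morphism) this upgrades Proposition~\ref{pro: G} to the assertion that $G$ is a skew monoidal $\V$-functor.

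The main obstacle is condition (iv). Here I would use that r2-exactness of $\M$ forces $\Gamma'_{V,N,\under}\colon V\oV(N\smp\under)\Rightarrow N\smp(V\oV\under)$ to be a \emph{natural isomorphism}. Since $V\oV\under$ preserves the source coequalizer~(\ref{def of hot'}) while $N\hot(V\oV\under)$ is the matching target coequalizer, the induced map $\bar\Gamma'_{V,N,\alg{M}}$ is nothing but the comparison between two reflexive-pair diagrams related by an isomorphism, and is therefore itself invertible. The delicate point---the real crux of (iv)---is to check that $\Gamma'$ genuinely intertwines the defining pairs, i.e.\ that the $T$-algebra action on $V\oV\alg{M}$, whose structure map is built from $\Gamma^{\prime -1}_{V,R,M}$, is carried by $\Gamma'$ to $\mu_{N,M}$ and $N\smp\alpha$; this is exactly where identities (\ref{sma-13}) and (\ref{sma-14}) must be used in full. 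Lifting to $\M^T$ and invoking creation of reflexive coequalizers by $G$ then yields invertibility of $\bar\Gamma'_{V,\alg{N},\alg{M}}$, so $\alg{M}\hot\under$ is strong and all five conditions of Definition~\ref{def: rex smc} hold for $\bra\M^T,\hot,\alg{R}\ket$, completing the proof.
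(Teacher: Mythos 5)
Your proposal is correct and follows essentially the same route as the paper: reduce via Lemma \ref{lem: M^T is V-cat}, Theorem \ref{thm: hot} and Proposition \ref{pro: G} to (a) constructing the strengths $\bar\Gamma,\bar\Gamma'$ by the universal property of the defining coequalizer (\ref{def of hot'}) with vertical comparison maps built from $\Gamma,\Gamma'$, (b) deducing invertibility of $\bar\Gamma'$ from invertibility of those comparison maps, and (c) obtaining $\V$-naturality of $\bar\gamma,\bar\eta,\bar\eps$ by pushing the unbarred coherence identities (\ref{sma-11})--(\ref{sma-15}) through the epimorphic coequalizer maps. Your more conceptual phrasing of step (c) (the coequalizer is a $\V$-colimit, so induced arrows are $\V$-natural) is just an abstract packaging of the paper's explicit computation, and your identification of the intertwining identities is accurate up to bookkeeping (the $\bar\Gamma$ diagram also uses (\ref{sma-11})).
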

\begin{proof}
By Lemma \ref{lem: M^T is V-cat}, Theorem \ref{thm: hot} and Proposition \ref{pro: G} we only have to show that
\begin{enumerate}
\item $\hot$ is a $\V$-functor, that is to say a 2-variable actegory morphism,
\item the partial functor $\alg{M}\hot\under$ is strong;
\item $\bar\gamma$, $\bar\eta$ and $\bar\eps$ are $\V$-natural, i.e., they are transformations of actegory morphisms.
\end{enumerate}
(i) The two strengths of $\hot$ can be introduced as the unique arrows making the diagrams
\begin{alignat}{2}
&V\oV(N\smp TM)\quad\longpair{V\oV\mu_{N,M}}{V\oV(N\smp \alpha)}
&V\oV(N\smp M) \longcoequalizer{V\oV\pi_{N,\alg{M}}}
&V\oV(N\hot\alg{M}) \notag\\ 
\label{def of barGamma}
&\qquad\parbox[c]{5pt}{\begin{picture}(5,40)\put(2,40){\vector(0,-1){40}}\end{picture}}\sst\Gamma_{V,N,TM}
&\ \parbox[c]{5pt}{\begin{picture}(5,40)\put(2,40){\vector(0,-1){40}}\end{picture}}\sst\Gamma_{V,N,M}\qquad\qquad\qquad
&\qquad\parbox[c]{5pt}{\begin{picture}(5,40)\dashline{3}(2,40)(2,2)\put(2,2){\vector(0,-1){0}}\end{picture}}
\sst\bar\Gamma_{V,N,\alg{M}}\\
&(V\oV N)\smp TM\quad\longpair{\mu_{V\oV N,M}}{(V\oV N)\smp \alpha}
&(V\oV N)\smp M \longcoequalizer{\pi_{V\oV N,\alg{M}}}
&(V\oV N)\hot\alg{M} \notag
\end{alignat}
and
\begin{alignat}{2}
&V\oV(N\smp TM)\quad\longpair{V\oV\mu_{N,M}}{V\oV(N\smp \alpha)}
&V\oV(N\smp M) \longcoequalizer{V\oV\pi_{N,\alg{M}}}
&V\oV(N\hot\alg{M}) \notag\\ 
\label{def of barGamma'}
&\qquad\parbox[c]{5pt}{\begin{picture}(5,40)\put(2,40){\vector(0,-1){40}}\end{picture}}
\sst(N\smp\Gamma'_{V,R,M})\ci\Gamma'_{V,N,TM}
&\ \parbox[c]{5pt}{\begin{picture}(5,40)\put(2,40){\vector(0,-1){40}}\end{picture}}\sst\Gamma'_{V,N,M}\qquad\qquad\qquad
&\qquad\parbox[c]{5pt}{\begin{picture}(5,40)\dashline{3}(2,40)(2,2)\put(2,2){\vector(0,-1){0}}\end{picture}}
\sst\bar\Gamma'_{V,N,\alg{M}}\\
&N\smp T(V\oV M)\quad\longpair{\mu_{N,V\oV M}}{N\smp \hat\alpha}
&N\smp (V\oV M) \longcoequalizer{\pi_{N,V\oV\alg{M}}}
&N\hot(V\oV\alg{M}) \notag
\end{alignat}
commutative where in the second we used the notation $\hat\alpha:=(V\oV\alpha)\ci\Gamma^{\prime -1}_{V,R,M}$.
Calling the first vertical arrows of these diagrams $\Gamma^2_{V,N,M}$ and $\Gamma^{\prime 2}_{V,N,M}$, respectively,
we find, as in Lemma \ref{lem: smp^2}, that $\Gamma^2$ and $\Gamma^{\prime 2}$ obey the 5 coherence conditions making them strengths for the $\V$-functor $\smp^2$. It follows now from the exactness conditions that $\bar\Gamma$ and $\bar\Gamma'$ satisfy the coherence conditions making them strengths for $\hot$.

(ii) Since the first 2 vertical arrows in (\ref{def of barGamma'}) are isomorphisms, so is the third.

(iii) $\V$-naturality of $\bar\gamma$ means proving equations (\ref{sma-11}), (\ref{sma-12}) and (\ref{sma-13}) with $\gamma$, $\Gamma$ and $\smp$ replaced by $\bar\gamma$, $\bar\Gamma$ and $\hot$, respectively.
After composing these equations with the epimporphism $(V\oV\pi_{L,\alg{M}\hot\alg{N}})\ci(V\oV(L\smp\pi_{\alg{M},\alg{N}}))$,
using the relations (\ref{def of bargamma}), (\ref{def of barGamma}) or (\ref{def of barGamma'}) and then (\ref{sma-11}) or (\ref{sma-12}) or (\ref{sma-13}), respectively, the barred versions of (\ref{sma-11}), (\ref{sma-12}) or (\ref{sma-13}) can be proven. 
One can similarly verify the barred versions of (\ref{sma-14}) or (\ref{sma-15}) expressing $\V$-naturality of $\bar\eta$ and $\bar\eps$.
\end{proof}

\section{A lifting theorem}

Let $\M$ and $\M'$ be skew monoidal categories and let $K:\M\to\M'$ be a skew monoidal functor.
We assume that both $\M$ and $\M'$ have reflexive coequalizers and that both skew monoidal products preserve reflexive coequalizers in the second argument. Thus both module categories $\M^T$ and $\M^{\prime T'}$ have a skew monoidal structure by Theorem \ref{thm: hot} and have  skew monoidal forgetful functors $G$ and $G'$ as in the diagram
\begin{equation} \label{lift of K}
\begin{CD}
\M^T@>\lf{K}>?>\M^{\prime T'}\\
@VV{G}V @VV{G'}V\\
\M@>K>>\M'
\end{CD}
\end{equation}
We are asking whether a skew monoidal lifting $\lf{K}$ of $K$ exists that makes the diagram (strictly) commutative in the 2-category of skew monoidal categories. This problem is not simply a lifting problem in the 2-category of (r2-exact) skew monoidal categories because the canonical monad is not skew monoidal. (In fact $T$ is closer to be skew opmonoidal than skew monoidal but it is neither of them.) But we do not claim the existence of $\lf{K}$ for general monads, either. 

\begin{thm} \label{thm: lift}
Let $\M$ and $\M'$ be as above. Then any skew monoidal functor $K:\M\to\M'$ has a skew monoidal lift $\lf{K}$ to the 
categories of modules.

If $K$ is a skew monoidal $\V$-functor with $\M$, $\M'$ being r2-exact skew monoidal $\V$-categories
then the lift $\lf{K}$ is also a skew monoidal $\V$-functor.
\end{thm}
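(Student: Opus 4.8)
The plan is to obtain $\lf{K}$ as the Eilenberg--Moore lift of $K$ along a canonical morphism between the two canonical monads, and then to transport the lax skew monoidal structure of $K$ through the coequalizer presentations (\ref{def of hot'}) of $\hot$ and ${\hot}'$.

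First I would construct the monad morphism. Writing $K_{M,N}:KM\smp'KN\to K(M\smp N)$ and $K_0:R'\to KR$ for the structure cells of the lax skew monoidal functor $K$, I set
\[
\lambda_M\ :=\ K_{R,M}\ci(K_0\smp'KM)\ :\ T'KM=R'\smp'KM\longrightarrow K(R\smp M)=KTM\,.
\]
I expect the two monad-morphism identities
\[
\lambda_M\ci\eta'_{KM}=K\eta_M,\qquad\lambda_M\ci\mu'_{KM}=K\mu_M\ci\lambda_{TM}\ci T'\lambda_M
\]
to fall out of the unit and associativity coherences of $K$ (recall $\mu_M=(\eps_R\smp M)\ci\gamma_{R,R,M}$). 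Given $\lambda$, every $T$-algebra $\alg{M}=\bra M,\alpha\ket$ acquires the $T'$-action $K\alpha\ci\lambda_M$, and this assignment is functorial and defines $\lf{K}:\M^T\to\M^{\prime T'}$ with $G'\lf{K}=KG$ strictly. The unit cell $\lf{K}_0:\alg{R'}\to\lf{K}\alg{R}$ is then just $K_0$, which is a $T'$-algebra map precisely by the $\eps$-coherence of $K$.

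Next I would produce the multiplicativity constraint. Since $G'\lf{K}=KG$, the target $\lf{K}(\alg{M}\hot\alg{N})$ has underlying object $K(M\hot\alg{N})$, so I seek
\[
\lf{K}_{\alg{M},\alg{N}}\ :\ \lf{K}\alg{M}{\hot}'\lf{K}\alg{N}\longrightarrow\lf{K}(\alg{M}\hot\alg{N})\,.
\]
Composing $K_{M,N}$ with $K\pi_{M,\alg{N}}$ gives a map $KM\smp'KN\to K(M\hot\alg{N})$, and I would check that it coequalizes the reflexive pair $\mu'_{KM,KN},\ KM\smp'\beta'$ that presents $\lf{K}\alg{M}{\hot}'\lf{K}\alg{N}$ as in (\ref{def of hot'}), where $\beta'=K\beta\ci\lambda_N$ is the lifted action on $\lf{K}\alg{N}$; naturality of $K_{M,N}$ together with the definitions of $\mu'$ and $\beta'$ reduces this to the $\gamma$- and $\eps$-coherences of $K$. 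The universal property of $\pi'$ then yields $\lf{K}_{\alg{M},\alg{N}}$ uniquely, characterized by $\lf{K}_{\alg{M},\alg{N}}\ci\pi'_{KM,\lf{K}\alg{N}}=K\pi_{M,\alg{N}}\ci K_{M,N}$.

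It then remains to verify the skew monoidal functor axioms for the pair $(\lf{K}_{\alg{M},\alg{N}},\lf{K}_0)$. Because $G'$ is faithful and each coequalizer projection $\pi'$ is epi, I would prove the associativity hexagon and the two unit triangles by evaluating both sides on the epimorphisms assembled from $\pi'$ and $K\pi$ and pushing the equations down into $\M'$, where they follow from the corresponding coherences of $K$ and from the defining relations (\ref{def of bargamma}), (\ref{def of bareta}) and (\ref{def of bareps}) of $\bar\gamma$, $\bar\eta$, $\bar\eps$. I expect the main obstacle to be the associativity hexagon: unwinding $\lf{K}$ applied to $\bar\gamma$ requires the same $3\times 3$ and diagonal-lemma bookkeeping used to construct $\bar\gamma$ in Theorem \ref{thm: hot}, matched against the pentagon-type coherence of $K_{M,N}$, and keeping the two reflexive-coequalizer presentations aligned is where the care lies. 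Finally, for the enriched statement I would rerun the argument of Theorem \ref{thm: V-hot}: under r2-exactness the strength of $\lf{K}$ is induced from that of $K$ by the universal property of the coequalizers (\ref{def of barGamma}), exactly as $\bar\Gamma$ was obtained from $\Gamma$, and checking that $\lf{K}_{\alg{M},\alg{N}}$ and $\lf{K}_0$ are $\V$-natural is again a reduction through the epis to the $\V$-naturality of $K_{M,N}$ and $K_0$; together these make $\lf{K}$ a skew monoidal $\V$-functor.
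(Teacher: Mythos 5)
Your construction coincides with the explicit one the paper gives: the same monad morphism $\kappa_M=K_{R,M}\ci(K_0\smp' KM)$ (for which the paper cites \cite[Lemma 2.7]{SMC} rather than re-deriving the two monad-morphism identities), the same unit cell $\lf{K}_0=K_0$, and the same multiplicativity cell $\lf{K}_{\alg{M},\alg{N}}$ obtained by factoring $K\pi_{\alg{M},\alg{N}}\ci K_{M,N}$ through the coequalizer presenting $\lf{K}\alg{M}\hot'\lf{K}\alg{N}$, exactly as in diagram (\ref{lift K2}). The genuine difference lies in how the skew monoidal functor axioms are discharged. The paper's actual proof of the unenriched statement is a two-line appeal to the universal property of $G'$ established in \cite[Theorem 8.1]{Lack-Street: On monads and warpings}: $KG$ is a skew monoidal functor whose domain is normal skew monoidal, hence it factors uniquely through $G'$ \emph{as skew monoidal functors}; existence and skew monoidality of $\lf{K}$ come for free, and the explicit $\kappa$, $\lf{K}_2$, $\lf{K}_0$ appear only as an instructive description of the resulting lift, with the axiom check explicitly left to the reader. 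Your route forgoes this abstract argument and commits to verifying the hexagon and the unit triangles directly; that is feasible by the epi/faithfulness reductions you describe (pushing everything down along $G'$ and the projections $\pi'$, $K\pi$), and it buys self-containedness at the price of the $3\times 3$ bookkeeping you correctly identify as the main labour.

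One step of your enriched argument is mis-stated. The strength $\lf{K}_{V,\alg{M}}:V\oV\lf{K}\alg{M}\to\lf{K}(V\oV\alg{M})$ cannot be ``induced by the universal property of the coequalizers (\ref{def of barGamma})'': those coequalizers present the tensor $N\hot\alg{M}$ and produce the strength $\bar\Gamma$ of the functor $\hot$, whereas the source $V\oV\lf{K}\alg{M}$ of the desired strength is not presented by any coequalizer — the $\V$-action on $\M^{\prime T'}$ is lifted directly, as in Lemma \ref{lem: M^T is V-cat}. The correct mechanism, and the paper's, is to set $G'\lf{K}_{V,\alg{M}}:=K_{V,G\alg{M}}$, i.e. to use the strength of $K$ itself, and then to check that this arrow is a morphism of $T'$-algebras between the lifted algebra structures; that check is precisely where the hypothesis that $K_2$ is $\V$-natural in its second argument, relation (\ref{V-nat of K_2}) specialized to $L=R$, enters. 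This is a repairable slip rather than a fatal gap — the remainder of your reduction strategy for the coherence and $\V$-naturality conditions is sound — but as written that step would not go through.
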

\begin{proof}
Merely as ordinary functors the existence of $\lf{K}$ follows from the universal property of the forgetful functor $G'$ as it was formulated in \cite[Theorem 8.1]{Lack-Street: On monads and warpings}. Indeed, $KG$ is a skew monoidal functor the domain of which is normal skew monoidal.
Hence it must factor uniquely through $G'$ as skew monoidal functors.
Nevertheless it is instructive to look at the form $\lf{K}$ takes due to the special form of $KG$.

For arbitrary monads $T$, $T'$ the possible lifts of $K$ are known to be in bijection with natural transformations $\kappa:T'K\to KT$ such that the
pair $\bra K,\kappa\ket$ forms a monad morphism \cite{Street: Mnd} $\bra\M,T\ket\to\bra\M',T'\ket$. Then the lift associated to $\kappa$ is given
by
\[
\lf{K}\ :\ \bra M,\alpha\ket\ \mapsto\ \bra KM,K\alpha\ci\kappa_M\ket\,.
\]
A candidate for a monad morphism is provided by the skew monoidal structure of $K$ \cite[Lemma 2.7]{SMC}, namely $\kappa_M=K_{R,M}\ci(K_0\smp KM)$. We claim that this choice does the job. 

The skew monoidal structure of $\lf{K}$ can be constructed as follows. The diagram valid for all $\alg{M_i}=\bra M_i,\alpha_i\ket\in\M^T$, $i=1,2$, 
\begin{alignat}{2} 
&KM_1\smp T'KM_2\ \ \longpair{\mu'_{KM_1,KM_2}}{KM_1\smp(K\alpha_2)\ci\kappa_{M_2}}
&KM_1\smp KM_2 \longcoequalizer{\pi'_{\lf{K}\alg{M_1},\lf{K}\alg{M_2}}}
&\lf{K}\alg{M_1}\hot \lf{K}\alg{M_2} \notag\\ 
\label{lift K2}
&\quad\parbox[c]{5pt}{\begin{picture}(5,40)\put(2,40){\vector(0,-1){40}}\end{picture}}\sst K_{M_1,TM_2}\ci(KM_1\smp\kappa_{M_2})
&\parbox[c]{5pt}{\begin{picture}(5,40)\put(2,40){\vector(0,-1){40}}\end{picture}}\sst K_{M_1,M_2}\qquad\qquad\quad
&\quad\quad\quad\parbox[c]{5pt}{\begin{picture}(5,40)\dashline{3}(2,40)(2,2)\put(2,2){\vector(0,-1){0}}\end{picture}}\sst \lf{K}_{\alg{M}_1,\alg{M}_2}\\
&K(M_1\smp TM_2)\quad\longpair{K\mu_{M_1,M_2}}{K(M_1\smp \alpha_2)}
&K(M_1\smp M_2)\quad \longrarr{K\pi_{\alg{M}_1,\alg{M}_2}}\quad
&\lf{K}(\alg{M}_1\hot\alg{M}_2) \notag
\end{alignat}
defines the components of $\lf{K}_2$.
The unit constraint for $\lf{K}$ is defined by
\begin{equation} \label{lift K0}
\lf{K}_0:=\left(\bra R',\eps'_{R'}\ket\rarr{K_0}\lf{K}\bra R,\eps_R\ket\right).
\end{equation}
We leave it to the reader to check that $\bra \lf{K},\lf{K}_2,\lf{K}_0\ket$ is a skew monoidal functor and makes (\ref{lift of K}) commutative. 

As far as the $\V$-structure is concerned, the assumption that $K$ is a skew monoidal $\V$-functor includes the assumption that the multiplicativity constraint $K_2$ is $\V$-natural in the second argument,
\begin{equation}\label{V-nat of K_2}
K_{L,V\oV M}\ci(KL\smp K_{V,M})\ci\Gamma'_{V,KL,KM}\ =\ K\Gamma'_{V,L,M}\ci K_{V,L\smp M}\ci (V\oV K_{L,M})\,.
\end{equation}
This relation for $L=R$ suffices to define the strength of $\lf{K}$. As a matter of fact,
\[
V\oV\lf{K}\alg{M}\longrarr{\lf{K}_{V,\alg{M}}}\lf{K}(V\oV\alg{M})
\]
can be introduced by 
\begin{equation} \label{def of strength of Kbar}
G'\lf{K}_{V,\alg{M}}:=K_{V,G\alg{M}}
\end{equation}
since $K_{V,M}$ is a well-defined arrow 
\[
\bra V\oV KM, (V\oV K\alpha)\ci(V\oV\kappa_M)\ci\Gamma^{\prime -1}_{V,R',KM}\ket\longrarr{K_{V,M}}
\bra K(V\oV M), K(V\oV\alpha)\ci K\Gamma^{\prime -1}_{V,R,M}\ci\kappa_{V\oV M}\ket
\]
in $\M^{\prime T'}$ due to commutativity of the diagram
\[
\begin{CD}
T'(V\oV KM)@<\Gamma'<\sim<V\oV T'KM@>V\oV \kappa_M>> V\oV KTM@>V\oV K\alpha>>V\oV KM\\
@V{T' K_{V,M}}VV @. @V{K_{V,TM}}VV @VV{K_{V,M}}V\\
T'K(V\oV M)@>\kappa_{V\oV M}>>KT(V\oV M)@<K\Gamma'<\sim<K(V\oV TM)@>K(V\oV\alpha)>>K(V\oV M)
\end{CD}
\]
which follows from (\ref{V-nat of K_2}). The coherence conditions for $\lf{K}$ to be a $\V$-functor, i.e.,
\begin{align}
\lf{K}_{V\oV W,\alg{M}}\ci\asso_{V,W,\lf{K}\alg{M}}&=\lf{K}\asso_{V,W,\alg{M}}\ci\lf{K}_{V,W\oV\alg{M}}\ci(V\oV\lf{K}_{W,\alg{M}})\\
\lf{K}_{I,\alg{M}}\ci\luni_{\lf{K}\alg{M}}&=\lf{K}\luni_{\alg{M}}
\end{align}
reduce to the analogous relations for $K$ if we apply $G'$ and use (\ref{def of strength of Kbar}) and strictness of the $\V$-functor $G$,
i.e., the identities $G\asso_{V,W,\alg{M}}=\asso_{V,W,G\alg{M}}$ and $G\luni_{\alg{M}}=\luni_{G\alg{M}}$.

It remains to show that the skew monoidal structure of $\lf{K}$ constructed in the first part consists of $\V$-natural transformations.
For $\lf{K_0}$ there is nothing to prove but for $\lf{K}_{\alg{M},\alg{N}}$ we have to prove $\V$-naturality in the first and the second argument,
\begin{align}
\lf{K}\lf{\Gamma}_{V,\alg{M},\alg{N}}\ci\lf{K}_{V,\alg{M}\hot\alg{N}}\ci(V\oV\lf{K}_{\alg{M},\alg{N}})&=
\lf{K}_{V\oV \alg{M},\alg{N}}\ci(\lf{K}_{V,\alg{M}}\hot\lf{K}\alg{N})\ci\lf{\Gamma}_{V,\lf{K}\alg{M},\lf{K}\alg{N}}\\
\lf{K}\lf{\Gamma'}_{V,\alg{M},\alg{N}}\ci\lf{K}_{V,\alg{M}\hot\alg{N}}\ci(V\oV\lf{K}_{\alg{M},\alg{N}})&=
\lf{K}_{\alg{M},V\oV\alg{N}}\ci(\lf{K}\alg{M}\hot\lf{K}_{V,\alg{N}})\ci\lf{\Gamma'}_{V,\lf{K}\alg{M},\lf{K}\alg{N}}\,.
\end{align}
Fortunately, these relations immediately reduce to the analogous relations for $K$, e.g. to relation (\ref{V-nat of K_2}), after
applying the forgetful strict $\V$-functor $G'$.
This finishes the construction of the lifting $\lf{K}$ of $K$ as a skew monoidal $\V$-functor.
\end{proof}

As an application of the above Lifting Theorem we can show that a mere category equivalence proven in \cite[Theorem 5.3]{SMC} 
is actually a skew monoidal equivalence. 
Let $E$ be the endomorphism monoid of the skew monoidal unit $R$, so $E$ is a monoid in $\V$.
In \cite{SMC} we have shown that any skew monoidal structure $\bra\M,\smp,R\ket$ on $\M$ induces a skew monoidal structure 
$\bra\,_E\M,\smpq,\Eob{R}\ket$ on the category of $E$-objects and there is a skew monoidal forgetful functor $\phi:\,_E\M\to\M$. 
Let $T^q$ denote the canonical monad of $\bra\,_E\M,\smpq,\Eob{R}\ket$.
\begin{thm} \label{thm: supplement of SMC}
For any r2-exact skew monoidal tensored $\V$-category $\bra\M,\smp,R\ket$ there is a strictly commutative diagram of skew monoidal $\V$-functors
\[
\begin{CD}
(\,_E\M)^{T_q}@>\bar\phi>\sim>\M^T\\
@V{G^q}VV @VV{G}V\\
_E\M@>\phi>>\M
\end{CD}
\]
in which $\bar\phi$ is an equivalence of skew monoidal $\V$-categories, in particular a strong skew monoidal functor.
\end{thm}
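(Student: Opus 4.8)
The plan is to obtain $\bar\phi$ as the module lift of $\phi$ furnished by the Lifting Theorem (Theorem~\ref{thm: lift}), then to identify its underlying functor with the equivalence of \cite[Theorem 5.3]{SMC} and to verify that the lifted structure is strong.

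First I would record that $\bra\,_E\M,\smpq,\Eob{R}\ket$ is itself an r2-exact skew monoidal $\V$-category. Since $_E\M\simeq[E,\M]$, the forgetful functor $\phi$ creates reflexive coequalizers, so $_E\M$ has them and they are computed on underlying objects; preservation by $\smpq$ in the second argument then follows from preservation by $\smp$, while condition (i) of Definition~\ref{def: rex smc} and invertibility of the corresponding $\Gamma'_q$ are inherited from $\M$ through the construction of $\smpq$ in \cite{SMC}. With both $_E\M$ and $\M$ r2-exact, Theorem~\ref{thm: lift} applies to the skew monoidal $\V$-functor $\phi$ and produces a skew monoidal $\V$-functor $\bar\phi\colon(\,_E\M)^{T_q}\to\M^T$ whose square with $G^q$, $\phi$, $G$ commutes strictly. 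Explicitly $\bar\phi\bra M,\alpha\ket=\bra\phi M,\phi\alpha\ci\kappa_M\ket$ with $\kappa_M=\phi_{\Eob{R},M}\ci(\phi_0\smp\phi M)$, and its constraints are the $\bar\phi_0$ and $\bar\phi_{\alg{M},\alg{N}}$ of (\ref{lift K0}) and (\ref{lift K2}).

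Next I would observe that this underlying ordinary functor is precisely the equivalence constructed in \cite[Theorem 5.3]{SMC}: both implement the same passage between the $T$-action and the $T_q$-action on a common underlying object. Hence $\bar\phi$ is an equivalence of categories, and it remains only to see that it is strong. The unit constraint is immediate, since $\phi$ is strictly normal ($\phi\Eob{R}=R$, $\phi_0=\id$), so $\bar\phi_0$ is invertible.

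The main obstacle is the multiplicativity constraint $\bar\phi_{\alg{M},\alg{N}}$. Here I would argue through the defining reflexive coequalizers (\ref{lift K2}): both rows are reflexive coequalizers, and the left and middle comparison arrows are assembled from the multiplicativity $\phi_2$ and the monad morphism $\kappa$. Because $G$ is monadic, hence conservative, it suffices to prove $G\bar\phi_{\alg{M},\alg{N}}$ invertible; by strict commutativity of the square and Proposition~\ref{pro: G} this reduces to comparing the action $\hot\colon\M\x\M^T\to\M$ of (\ref{def of hot'}) with the image under $\phi$ of the corresponding action on $_E\M$. Since $\phi$ creates the reflexive coequalizers defining both actions and $\hot$ preserves reflexive coequalizers (Theorem~\ref{thm: hot}(iii)), the induced map on colimits is an isomorphism exactly because the $E$-object structure carried by a $T_q$-algebra is the very structure recovered from a $T$-action; this is where the explicit description of $\smpq$ in \cite{SMC} is essential. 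Establishing this isomorphism is the technical heart, and once it is in place $\bar\phi$ is a strong skew monoidal $\V$-functor and an equivalence of underlying categories, hence an equivalence of skew monoidal $\V$-categories.
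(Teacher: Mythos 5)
Your outline coincides with the paper's own route: establish r2-exactness of $\bra\,_E\M,\smpq,\Eob{R}\ket$ (the paper devotes a separate Appendix proposition to this), obtain $\bar\phi$ and the strictly commuting square from the Lifting Theorem \ref{thm: lift} applied to $\phi$, quote \cite[Theorem 5.3 (ii)]{SMC} for the underlying category equivalence, observe $\bar\phi_0=\id$, and reduce the whole problem to invertibility of the unlifted multiplicativity constraint $\bar\phi_{M,\alg{N}^q}$ (legitimate, since $G$ reflects isomorphisms). Up to that point you are in step with the paper.

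At the decisive step, however, you stop: you declare that ``establishing this isomorphism is the technical heart'' and then assume it, so strong monoidality is never actually proved. Moreover, the reduction you offer in its place is not sound as stated: $\phi$ does not ``create both coequalizers,'' because the two rows of diagram (\ref{diag: phi2}) are coequalizers of \emph{different} parallel pairs over different objects --- the top row lives over $M\smp N$, the bottom row over its quotient $M\smpq\Eob{N}$ --- so there is no single diagram whose colimit is computed twice. What the paper does instead is construct the inverse by hand. The key identity, which is exactly the precise form of your intuition that ``the $E$-object structure carried by a $T^q$-algebra is recovered from the $T$-action,'' is
\[
\phi\beta\ci q_{\Eob{N}}\ci\sigma_N\ =\ \lambda_{\Eob{N}}\,,
\]
i.e.\ the $E$-action of $\Eob{N}$ is the restriction of the $T$-algebra structure $\phi\beta\ci q_{\Eob{N}}$ of $\alg{N}=\bar\phi\alg{N}^q$ along the monad morphism $\sigma$. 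Composing the top parallel pair of (\ref{diag: phi2}) with $(M\smp\sigma_N)\ci\Gamma'_{E,M,N}$ and using this identity gives $\pi_{M,\alg{N}}\ci\rho=\pi_{M,\alg{N}}\ci\lambda_2$, so $\pi_{M,\alg{N}}$ coequalizes the pair (\ref{q coeq}) defining $q_{M,\Eob{N}}$ and factors as $u\ci q_{M,\Eob{N}}$ for a unique $u:M\smpq\Eob{N}\to M\hot\alg{N}$; one then checks that $u$ coequalizes the bottom parallel pair, hence factors through $\pi^q_{M,\alg{N}^q}$, and the resulting arrow is inverse to $\bar\phi_{M,\alg{N}^q}$. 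Without writing down this identity and running the two factorizations, the claim that $\bar\phi$ is strong --- which is the entire content of the theorem beyond the citations --- remains unestablished.
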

Since this result somewhat deviates from the main topic of the paper, its proof is relegated to the Appendix.

\section{The underlying category} \label{sec: Eob}

The aim of this section is the construction of the analogue of the bimodule category $\Bimod{R}{R}$ underlying the module category of a bialgebroid over $R$. Since $\M$ plays the role of the right $R$-module category $\Mod{R}$ and $\Bimod{R}{R}$ is the Eilenberg-Moore category of the monad $R\oV\under$ on $\Mod{R}$,
the natural candidate is the category $_E\M$ of left $E$-objects in $\M$ where $E:=\M(R,R)$ is the endomorphism monoid of the object $R$. However the (skew) monoidal structure
of $_E\M$ should be independent of the given $\smp$-structure on $\M$, just like the monoidal category $\Bimod{R}{R}$ exists independently of what kind of $R$-bialgebroids we
want to consider. This implies e.g. that the skew monoidal product $\smpq$ of the Appendix has nothing to do with the skew monoidal product 
$\uot$ we are looking for. A skew monoidal structure $\bra\,_E\M,\uot,\Eob{R}\ket$ on the category of $E$-objects which is determined solely by 
the $\V$-category structure of $\M$ and by the position of the object $R$ in $\M$ is called the underlying category, provided an appropriate skew
monoidal forgetful functor $\bra\M^T,\hot,\alg{R}\ket\to \bra\,_E\M,\uot,\Eob{R}\ket$ exists.

We start with the following observation.
\begin{pro}\label{pro: dot}
For every object $R$ in a tensored $\V$-category $\M$ there is a skew monoidal structure on $\M$ with unit object $R$.
Using the functor $H:=\M(R,\under):\M\to \V$ the skew monoidal product is
\[
M\bullet N\ :=\ HM\oV N
\]
and the coherence morphisms are
\begin{align}
\label{dotgamma}
\dot\gamma_{L,M,N}&=(H_{HL,M}\oV N)\ci\asso_{HL,HM,N}\\
\dot\eta_M&=(i_R\oV M)\ci\luni_M\\
\label{doteps}
\dot\eps_M&=\ev_{R,M}\,.
\end{align}
Furthermore, the functor $H$, equipped with the natural transformation
\[
H_{HM,N}:HM\oV HN\to H(M\bullet N)
\]
and the arrow $I\rarr{i_R}HR$, is a skew monoidal functor $\bra\M,\bullet,R\ket\to \bra\V,\oV,I\ket$.

If $\M$ has and $V\oV\under$ preserves reflexive coequalizers for all $V\in\V$ then $\bra\M,\bullet,R\ket$ is r2-exact.
\end{pro}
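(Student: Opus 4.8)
The plan is to prove the three assertions in order: first that $\bra\M,\bullet,R,\dot\gamma,\dot\eta,\dot\eps\ket$ obeys the skew monoidal axioms (\ref{SMC1})--(\ref{SMC5}); then that $H$, with multiplication $H_2=\chi_{HM,R,N}$ and unit $H_0=i_R$, is skew monoidal; and finally, after supplying the $\V$-actegory strengths of $\bullet$, that the five conditions of Definition \ref{def: rex smc} hold. The uniform method for the first two parts is to unfold every composite into $\V$ and to reduce each axiom to four ingredients: the pentagon and triangles of the actegory $\M$; the strength coherence (\ref{strength-1})--(\ref{strength-2}) of $H=\M(R,\under)$, whose strength is $H_{V,N}=\chi_{V,R,N}$ by (\ref{def of chi}); the identities (\ref{V-nat of ev})--(\ref{V-nat of coev}) saying that $\ev$ and $\coev$ are transformations of actegory morphisms; and the triangle identities of the tensor adjunction $\under\oV R\dashv H$ together with the definition (\ref{c via ev and H}) of $i_R$.

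I would dispatch the unit axioms first. For instance (\ref{SMC5}), $\dot\eps_R\ci\dot\eta_R=\id_R$, unfolds to $\ev_{R,R}\ci(i_R\oV R)\ci\luni_R$; substituting $i_R=\M(R,\luni^{-1}_R)\ci\coev_{R,I}$, applying naturality of $\ev$ and the triangle identity $\ev_{R,I\oV R}\ci(\coev_{R,I}\oV R)=\id$, this collapses to $\luni^{-1}_R\ci\luni_R=\id_R$. The three triangles (\ref{SMC2})--(\ref{SMC4}) reduce analogously, each to a short diagram in $\asso$, $\luni$ and $\chi$ closed up by one triangle identity or by (\ref{V-nat of ev})/(\ref{V-nat of coev}). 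The genuine obstacle is the pentagon (\ref{SMC1}) for $\dot\gamma$: both sides are morphisms in $\V$ assembled from the actegory associator $\asso$ and the strength $\chi$, and equating them is the real combinatorial core. It follows by combining the actegory pentagon for $\asso$ with the mixed pentagon (\ref{strength-1}) relating $\asso$ and $\chi$, using naturality of $\asso$ and $\chi$ to align the two. I expect this to be the one lengthy diagram chase; the remaining axioms are comparatively formal.

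For the second assertion the skew monoidal functor axioms for $\bra H,H_2,H_0\ket$ are the defining relations of $\dot\gamma,\dot\eta,\dot\eps$ read in reverse: the associativity constraint is again (\ref{strength-1}) for $\chi$, while the two unit constraints are (\ref{strength-2}) together with the definition of $i_R$ and a triangle identity. Hence this part needs no new computation beyond what part one already provides.

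For r2-exactness I first present the two strengths that make $\bullet$ a $2$-variable $\V$-actegory morphism. Because $M\bullet N=HM\oV N$, the first-variable strength is $\Gamma_{V,M,N}=(H_{V,M}\oV N)\ci\asso_{V,HM,N}$, while the second-variable strength is the canonical isomorphism $\Gamma'_{V,M,N}\colon V\oV(HM\oV N)\to HM\oV(V\oV N)$ built from $\asso^{\pm1}$ and the symmetry $s$; their coherence (\ref{sma-6})--(\ref{sma-10}) and the transformation conditions (\ref{sma-11})--(\ref{sma-15}) for $\dot\gamma,\dot\eta,\dot\eps$ follow once more from actegory coherence and (\ref{strength-1})--(\ref{strength-2}). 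Now conditions (i)--(iii) of Definition \ref{def: rex smc} are precisely the standing hypothesis that $\M$ is tensored plus the assumed existence and preservation of reflexive coequalizers. Condition (iv) holds because $\Gamma'$ is by construction a composite of invertible actegory associators and the symmetry, hence invertible. Condition (v) holds because $M\bullet\under=HM\oV\under$ is the action functor $V\oV\under$ at $V=HM$, which preserves reflexive coequalizers by hypothesis. This completes the plan.
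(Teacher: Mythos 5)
Your proposal is correct and follows essentially the same route as the paper's (deliberately terse) proof: both reduce the skew monoidal axioms for $\bullet$ and the skew monoidal functor axioms for $H$ to the strength coherence (\ref{strength-1})--(\ref{strength-2}) of $H_{V,M}=\chi_{V,R,M}$ together with actegory coherence, and for r2-exactness both exhibit exactly the same strengths $\dot\Gamma_{V,M,N}=(H_{V,M}\oV N)\ci\asso_{V,HM,N}$ and $\dot\Gamma'_{V,M,N}=\asso^{-1}_{HM,V,N}\ci(s_{V,HM}\oV N)\ci\asso_{V,HM,N}$, observing that $\dot\Gamma'$ is invertible and that $M\bullet\under=HM\oV\under$ preserves reflexive coequalizers by hypothesis. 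The only cosmetic difference is that the paper packages the unit-axiom computations via the two relations $H\ev_{R,M}\ci H_{HM,R}=c_{R,R,M}$ and $c_{R,R,M}\ci(HM\oV i_R)=\runi_{HM}$, whereas you work directly from the adjunction triangle identities and (\ref{V-nat of ev})--(\ref{V-nat of coev}), from which those relations follow.
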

\begin{proof}
The proof is a straightforward application of the coherence conditions 
\begin{align}
H a_{V,W,M}\ci H_{V,W\oV M}\ci(V\oV H_{W,M}&=H_{V\oV W,M}\ci a_{V,W,HM}\\
H_{I,M}\ci l_{HM}&=H l_M
\end{align}
for the strength $H_{V,M}:=\chi_{V,R,M}$ introduced in (\ref{def of chi}) and in particular the relations 
\begin{align*}
H\ev_{R,M}\ci H_{HM,R}&=c_{R,R,M}\\
H\ev_{R,M}\ci H_{HM,R}\ci(HM\oV i_R)&=\runi_{HM}\,.
\end{align*}
The details are left to the reader. As far as the r2-exactness is concerned, if $V\oV\under$ preserves reflexive coequalizers
for all $V\in\V$ then clearly does $M\bullet\under$ the same for all $M\in\M$.
Thus, for r2-exactness of $\bra\M,\bullet,R\ket$ we need only that the dotted $\Gamma'$ be an isomorphism. 
Since $\bullet$ is obtained from the action, as a 2-variable strong morphism $\under\oV\under:\V\x\M\to\M$, by composing
with $H$ on the left leg, strongness on the right leg remains untouched. Indeed, the strengths of $\bullet$ are
\begin{alignat}{2}
\dot\Gamma_{V,M,N} &=(H_{V,M}\oV N)\ci\asso_{V,HM,N} 
&\quad : V\oV(M\bullet N) &\to (V\oV M)\bullet N\\
\dot\Gamma'_{V,M,N}&=\asso^{-1}_{HM,V,N}\ci(s_{V,HM}\oV N)\ci\asso_{V,HM,N} 
&\quad: V\oV(M\bullet N) &\to M\bullet (V\oV N)\,.
\end{alignat}
\end{proof}
Now we are ready to apply Theorem \ref{thm: hot} to $\bra\M,\bullet,R\ket$ and to its canonical monad $\dot T=R\bullet\under=E\oV\under$.
\begin{cor} \label{cor: uot}
Let $\M$ be a tensored $\V$-category with reflexive coequalizers and assume that $V\oV\under$ preserves reflexive coequalizers for all $V\in\V$.
Then the category $\M^{\dot T}$ of modules over the skew monoidal category $\bra\M,\bullet,R\ket$ is the category $_E\M$
of $E$-objects equipped with `horizontal' tensor product $\Eob{M}\uot\Eob{N}$ defined by the coequalizer
\begin{equation} \label{uot as hot}
\M(R,M)\oV (E\oV N)\longerpair{(c_{R,R,M}\oV N)\ci\asso_{HM,E,N}}{HM\,\oV\, \lambda }
\M(R,M)\oV N\coequalizer{\omega_{M,\Eob{N}}}M\uot\Eob{N}
\end{equation}
and with skew unit object $\Eob{R}=\bra R,\ev_{R,R}\ket$.
\end{cor}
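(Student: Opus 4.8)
The plan is to obtain the statement directly from Theorem \ref{thm: hot} applied to the skew monoidal category $\bra\M,\bullet,R\ket$ of Proposition \ref{pro: dot}; the content is then only to identify the three data — the base category of modules, the defining coequalizer, and the unit object — with those in the statement. First I would verify the hypotheses of Theorem \ref{thm: hot}. By Proposition \ref{pro: dot} the triple $\bra\M,\bullet,R\ket$ is skew monoidal, and $\M$ has reflexive coequalizers by assumption. Since $M\bullet\under=HM\oV\under=\M(R,M)\oV\under$, the hypothesis that $V\oV\under$ preserves reflexive coequalizers for every $V$, applied to $V=HM$, shows that $\bullet$ preserves them in the second argument. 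Hence Theorem \ref{thm: hot} endows $\M^{\dot T}$ with a skew monoidal structure.

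Next I would identify $\M^{\dot T}$ with $\,_E\M$. The underlying endofunctor is $\dot T=R\bullet\under=HR\oV\under=E\oV\under$, so it suffices to recognise the induced monad structure as that of the monoid $E$. Using $\dot\mu_M=\dot\mu_{R,M}=(\dot\eps_R\bullet M)\ci\dot\gamma_{R,R,M}$ together with (\ref{dotgamma}), (\ref{doteps}) and the identity $H\ev_{R,M}\ci H_{HM,R}=c_{R,R,M}$ recorded in the proof of Proposition \ref{pro: dot}, the multiplication reduces to $(c_{R,R,R}\oV M)\ci\asso_{E,E,M}$, i.e. the monoid product $c_{R,R,R}:E\oV E\to E$ acting on $M$; similarly $\dot\eta_M=(i_R\oV M)\ci\luni_M$ is the action of the unit $i_R:I\to E$. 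Thus $\dot T$ is the monad of the monoid $E$, whence $\M^{\dot T}=\,_E\M$ and a $\dot T$-algebra $\bra N,\lambda\ket$ is exactly an $E$-object.

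Finally I would specialise the coequalizer (\ref{def of hot'}) defining $\hot$ to $\smp=\bullet$, writing the plain $\M$-object as $M$ and the algebra as $\Eob{N}=\bra N,\lambda\ket$. Its target is $M\bullet N=\M(R,M)\oV N$ and its source is $M\bullet\dot TN=\M(R,M)\oV(E\oV N)$; the lower arrow $M\bullet\lambda$ equals $HM\oV\lambda$, and — by the same computation as for $\dot\mu_M$ above, now with $\dot\gamma_{M,R,N}=(H_{HM,R}\oV N)\ci\asso_{HM,E,N}$ and $\dot\eps_M\bullet N=H\ev_{R,M}\oV N$ — the upper arrow $\dot\mu_{M,N}=(\dot\eps_M\bullet N)\ci\dot\gamma_{M,R,N}$ becomes $(c_{R,R,M}\oV N)\ci\asso_{HM,E,N}$. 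This is verbatim the coequalizer (\ref{uot as hot}), whose colimit is the action $M\uot\Eob{N}$; lifting this coequalizer to $\M^{\dot T}$ as in Theorem \ref{thm: hot} gives the tensor $\Eob{M}\uot\Eob{N}$. The skew unit is $\alg{R}=\bra R,\dot\eps_R\ket=\bra R,\ev_{R,R}\ket=\Eob{R}$ by (\ref{doteps}).

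The only step beyond bookkeeping is the computation of the upper parallel arrow, which turns on recognising $H\ev_{R,M}\ci H_{HM,R}$ as the enriched composition $c_{R,R,M}$. Once this identity is in hand, both the monad multiplication of $\dot T$ and the top leg of the coequalizer (\ref{uot as hot}) fall out of it simultaneously, and no genuine obstacle remains, this being a direct corollary of the preceding theorem and proposition.
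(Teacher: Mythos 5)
Your proposal is correct and follows essentially the same route as the paper: apply Theorem \ref{thm: hot} to the skew monoidal structure $\bra\M,\bullet,R\ket$ of Proposition \ref{pro: dot}, reduce the extended multiplication $\dot\mu_{L,M}=(\dot\eps_L\bullet M)\ci\dot\gamma_{L,R,M}$ to $(c_{R,R,L}\oV M)\ci\asso_{HL,E,M}$ via the identity $H\ev_{R,L}\ci H_{HL,R}=c_{R,R,L}$ (the paper cites (\ref{c via ev and H}) for exactly this), and identify the unit $\bra R,\dot\eps_R\ket=\bra R,\ev_{R,R}\ket$. Your extra checks (hypotheses of Theorem \ref{thm: hot}, and that $\dot T$-algebras coincide with $E$-objects by matching the monad multiplication and unit to the monoid structure of $E$) are steps the paper leaves implicit, but they introduce no divergence in method.
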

\begin{proof}
The canonical monad of the dot-structure is $\dot T=E\oV\under$ and it has multiplication 
\begin{align*}
\dot\mu_{L,M}&\equiv (\dot\eps_L\bullet M)\ci\dot\gamma_{L,R,M}=\\
&=(H\ev_{R,L}\oV M)\ci (H_{HL,R}\oV M)\ci\asso_{HL,E,M}=\\
&\eqby{c via ev and H}(c_{R,R,L}\oV M)\ci\asso_{HL,E,M}
\end{align*}
which shows that (\ref{uot as hot}) is identical to the coequalizer (\ref{def of hot'}) defining the horizontal tensor product
for $E$-objects.

The skew monoidal structure of $_E\M$ obtained by applying Theorem \ref{thm: hot} to $\bra\M,\bullet,R\ket$ has
skew monoidal unit $\Eob{R}=\bra R, \ev_{R,R}\ket$ and its coherence morphisms $\Eob{\gamma}$, $\Eob{\eta}$ and $\Eob{\eps}$ are the unique arrows making the diagrams 
\[
\begin{CD}
L\bullet (M\bullet N)@>\omega_{\Eob{L},\Eob{M}\uot\Eob{N}}\ci(L\bullet \omega_{\Eob{M},\Eob{N}})>>
\Eob{L}\uot(\Eob{M}\uot\Eob{N})\\
@V{\dot\gamma_{L,M,N}}VV @VV{\Eob\gamma_{\Eob{L},\Eob{M},\Eob{N}}}V\\
(L\bullet M)\bullet N@>\omega_{\Eob{L}\uot\Eob{M},\Eob{N}}\ci(\omega_{\Eob{L},\Eob{N}}\bullet N)>>
(\Eob{L}\uot\Eob{M})\uot\Eob{N}
\end{CD}
\]
\vskip 0.5cm
\begin{equation} \label{diag: Eob eta eps}
\begin{CD}
M@=M\\
@V{\dot\eta_M}VV @VV{\Eob{\eta}_{\Eob{M}}}V\\
R\bullet M@>\omega_{\Eob{R},\Eob{M}}>> \Eob{R}\uot\Eob{M}
\end{CD}
\qquad\qquad
\begin{CD}
M\bullet R@>\omega_{\Eob{M},\Eob{R}}>> \Eob{M}\uot\Eob{R}\\
@V{\dot\eps_M}VV @VV{\Eob{\eps}_{\Eob{M}}}V\\
M@=M
\end{CD}
\end{equation}
commutative. 
\end{proof}

The skew monoidal structure of $_E\M$ as presented by the above Corollary does not allow to say too much about whether
the coherence morphisms are invertible, except for $\Eob{\eta}$, which is always invertible by Theorem \ref{thm: hot} (i). 
In the next Proposition we give an alternative description for which, however, we need some preparations.

Let $\bra\, _E\V_E,\oE,E\ket$ be the monoidal category of $E$-$E$-bimodules in $\V$. 
The coherence isomorphisms of $_E\V_E$ are denoted by $\Eob{\asso}$, $\Eob{\luni}$ and $\Eob{\runi}$, all oriented according to our skew monoidal convention.

The category of right $E$-modules $\V_E$ is a right $_E\V_E$-category in the usual way, with the action defined by a choice of
coequalizers
\[
V\oV(E\oV W)\pair{}{}V\oV W\coequalizer{}\Eob{V}\oE\Eob{W}
\]
of reflexive pairs in $\V$ for each $\Eob{V}\in\V_E$ and $\Eob{W}\in\,_E\V_E$. The tensor product $\Eob{U}\oE\Eob{W}$ of two $E$-$E$-bimodules can be chosen by lifting this action to $_E\V_E$ along the forgetful functor $\phi:\,_E\V_E\to\V_E$ in such a way that $\phi$ to become a strict morphism of right $_E\V_E$-actegories. 

By a similar choice of coequalizers in $\M$ we can define a functor $\V_E\x\,_E\M\to\M$, also denoted by $\oE$,
\[
V\oV(E\oV M)\longpair{(\rho\oV M)\ci\asso_{V,E,M}}{V\oV\lambda}
V\oV M\coequalizer{\tau_{\Eob{V},\Eob{M}}}\Eob{V}\oE\Eob{M}
\]
for each $\Eob{V}\in\V_E$ and $\Eob{M}\in\,_E\M$. By $\V$-naturality of the reflexive pair we can lift the result to $_E\M$
if $\Eob{V}$ is an $E$-$E$-bimodule. In this way $\oE$ becomes an action making $_E\M$ to a left $_E\V_E$-actegory.
The coherence isomorphisms are denoted by $\Eob{\asso}_{\Eob{U},\Eob{V},\Eob{M}}$ and $\Eob{\luni}_{\Eob{M}}$ and
satisfy the usual actegory axioms.
For $\Eob{V}\in\,_E\V_E$ and $\Eob{M}\in\,_E\M$ the coequalizers $\tau_{\Eob{V},\Eob{M}}$ become components of a strength
\begin{equation}
\dot G_{\Eob{V},\Eob{M}}:=\tau_{\Eob{V},\Eob{M}}\ :\ \phi\Eob{V}\oV \dot G\Eob{M}\ \to\ \dot G(\Eob{V}\oE\Eob{M})
\end{equation}
for $\dot G:\,_E\M\to\M$ relative to $\phi:\,_E\V_E\to\V$. That is to say, $_\phi\dot G$ is a morphism from the $_E\V_E$-actegory $_E\M$ to the $\V$-actegory $\M$ in the sense of Definition \ref{def: gen act mor}.
The corresponding coherence condition (\ref{gen-strength-1}) takes the form
\begin{equation}\label{phi Gdot}
\dot G_{\Eob{V}\oE\Eob{W},\Eob{M}}\ci(\phi_{\Eob{V},\Eob{W}}\oV \dot G\Eob{M})\ci
\asso_{\phi\Eob{V},\phi\Eob{W},\dot G\Eob{M}}=
\dot G\Eob{\asso}_{\Eob{V},\Eob{W},\Eob{M}}\ci\dot G_{\Eob{V},\Eob{W}\oE\Eob{M}}\ci
(\phi\Eob{V}\oV\dot G_{\Eob{W},\Eob{M}})
\end{equation}
which will be needed soon.
Disregarding from the fact that $_E\V_E$ is not symmetric, $_E\M$ is in fact a tensored $_E\V_E$-category with $\M(\Eob{M},\Eob{N})\ :\ E\oV E^\op\to\V$ playing the role of the $_E\V_E$-valued hom.

Let $J$ denote the $\V$-functor
\[
J:=\M(\Eob{R},\under):\M\to\V_E
\]
whose value on the object $M$ is the right $E$-module $HM$ with action $c_{R,R,M}:HM\oV E\to HM$.
Its strength is the lifting of $H_{V,M}$ to $\V_E$,
\begin{align*}
&J_{V,M}=\\
&\left( V\oV JM\equiv\bra V\oV HM,(V\oV c_{R,R,M})\ci\asso^{-1}_{V,HM,E}\ket\rarr{H_{V,M}}
\bra H(V\oV M),c_{R,R,V\oV M}\ket\equiv J(V\oV M)\right)\,.
\end{align*}
$J$ has a left adjoint $J^*:=\under\oE\Eob{R}:\V_E\to\M$. The notation $\Eob{\ev}$ and $\Eob{\coev}$ will be used for the counit and unit of $J^*\dashv J$.

By definition, the object $R$ is called a \textsl{dense generator} for the $\V$-category $\M$ if $J$ is fully faithful \cite[Vol. 1]{Borceux}. 

We introduce also the functor $\JJ:\,_E\M\to\,_E\V_E$ which associates to $\Eob{M}$ the composite $\V$-functor $E\rarr{\Eob{M}}\M\rarr{J}\V_E$.
So, it is legitimate to write $\JJ\Eob{M}=J\Eob{M}$. The purpose of using a new notation is that we want to make $\JJ$ a $_E\V_E$-functor.
This can be done by requiring strict commutativity of the square
\[
\begin{CD}
_{_E\V_E}(\,_E\M)@>_\phi{\dot G}>>_\V\M\\
@V{\JJ}VV @VV{H}V\\
_{_E\V_E}(\,_E\V_E)@>_\phi\phi>>_\V\V
\end{CD}
\]
in the 2-category $\ACT$. This means, beyond the equality $H\dot G=\phi \JJ$ of functors, that the strength of $\JJ$ is determined by 
\begin{equation} \label{J-H-square}
\begin{CD}
\phi\Eob{V}\oV \overset{\phi \JJ}{\overbrace{H\dot G}}\Eob{M}@>\phi_{\Eob{V},\JJ\Eob{M}}>>\phi(\Eob{V}\oE \JJ\Eob{M})\\
@V{H_{\phi\Eob{V},\dot G\Eob{M}}}VV @VV{\phi \JJ_{\Eob{V},\Eob{M}}}V\\
H(\phi\Eob{V}\oV\dot G\Eob{M})@>H\dot G_{\Eob{V},\Eob{M}}>>\underset{H\dot G}{\underbrace{\phi \JJ}}(\Eob{V}\oE\Eob{M})
\end{CD}
\end{equation}

We want to tie the definition of $\uot$ to the definition of $\oE$ by setting the coequalizer $\omega$ of (\ref{uot as hot})
to be 
\begin{equation} \label{omega via tau}
\omega_{L,\Eob{M}}\ :=\ \tau_{JL,\Eob{M}}\qquad L\in\M,\ \Eob{M}\in\,_E\M\,.
\end{equation}
This choice allows the following pleasant representation of the coherence morphisms of $\uot$.
\begin{pro} \label{pro: uot via J}
The skew monoidal structure $\bra\,_E\M,\uot,\Eob{R},\Eob{\gamma},\Eob{\eta},\Eob{\eps}\ket$ on the category of $E$-objects can be chosen in such a way that the skew monoidal product is 
\[
\Eob{L}\uot\Eob{M}\ :=\JJ\Eob{L}\oE \Eob{M}
\]
and the coherence morphisms are
\begin{align}
\label{formula ugamma}
\Eob{\gamma}_{\Eob{L},\Eob{M},\Eob{N}}&=(\JJ_{J\Eob{L},\Eob{M}}\oE\Eob{N})\ci\Eob{\asso}_{J\Eob{L},J\Eob{M},\Eob{N}}\\
\label{formula ueta}
\Eob{\eta}_{\Eob{M}}&=\Eob{\luni}_{\Eob{M}}\\
\label{formula ueps}
\Eob{\eps}_{\Eob{M}}&=\Eob{\ev}_{\Eob{R},\Eob{M}}\,.
\end{align}
These expressions imply the following invertibility properties:
\begin{enumerate}
\item $\Eob{\eta}$ is an isomorphism.
\item $\Eob{\eps}$ is an isomorphism if $R$ is a dense generator in $\M$ as a $\V$-category.
\item $\Eob{\gamma}$ is an isomorphism if and only if $\JJ_{J\Eob{L},\Eob{M}}\oE\Eob{N}$ is an isomorphism for all
$E$-objects $\Eob{L}$, $\Eob{M}$ and $\Eob{N}$.  
\end{enumerate}
\end{pro}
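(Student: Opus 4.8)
The plan is to make the specific choice of coequalisers $\omega_{L,\Eob{M}}:=\tau_{JL,\Eob{M}}$ prescribed in (\ref{omega via tau}) and to recognise the resulting skew monoidal structure as the ``$\bullet$''-construction of Proposition \ref{pro: dot}, transported from the base $\V$ with distinguished object $R$ to the (non-symmetric) base ${}_E\V_E$ with distinguished object $\Eob{R}$ and hom-functor $\JJ=\M(\Eob{R},\under)$. With this choice the coequaliser (\ref{uot as hot}) presenting $L\uot\Eob{M}$ becomes \emph{verbatim} the coequaliser defining $JL\oE\Eob{M}$: the right $E$-action $c_{R,R,M}$ appearing in the upper leg of (\ref{uot as hot}) is exactly the action of the right $E$-module $JL$, and the lower legs coincide. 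Lifting to ${}_E\M$ along the $E$-$E$-bimodule $\JJ\Eob{L}$ then gives $\Eob{L}\uot\Eob{M}=\JJ\Eob{L}\oE\Eob{M}$, the first asserted formula. Only the skew monoidal coherence cells are at issue here, and their formulas in Proposition \ref{pro: dot} use only left-actegory data and the strength of the hom-functor, so neither symmetry nor closedness of ${}_E\V_E$ will be needed.

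First I would verify (\ref{formula ueta}) and (\ref{formula ueps}), both of which are quick. For the unit, $\JJ\Eob{R}=\M(\Eob{R},\Eob{R})=E$ is the unit bimodule and its enriched identity is $i_R\colon I\to E$; by the left triangle of (\ref{diag: Eob eta eps}), $\Eob{\eta}_{\Eob{M}}=\tau_{E,\Eob{M}}\ci(i_R\oV M)\ci\luni_M$. Composing with the canonical isomorphism $E\oE\Eob{M}\iso\Eob{M}$ induced by the left $E$-action and using the unit law of the $E$-object $\Eob{M}$ identifies this composite as the inverse of that isomorphism, i.e.\ as the actegory unitor $\Eob{\luni}_{\Eob{M}}$. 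For the counit, the right triangle of (\ref{diag: Eob eta eps}) together with (\ref{doteps}) says $\Eob{\eps}_{\Eob{M}}$ is the unique factorisation of $\dot\eps_M=\ev_{R,M}$ through $\omega_{\Eob{M},\Eob{R}}=\tau_{J\Eob{M},\Eob{R}}$; but $\ev_{R,M}$ coequalises the pair presenting that $\tau$, and its factorisation is by construction of the adjunction $J^*=\under\oE\Eob{R}\dashv J$ the counit $\Eob{\ev}_{\Eob{R},\Eob{M}}$.

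The substantial point is (\ref{formula ugamma}). By Corollary \ref{cor: uot} the associator $\Eob{\gamma}$ is the unique arrow making the hexagon commute, and its top leg $\omega_{\Eob{L},\Eob{M}\uot\Eob{N}}\ci(L\bullet\omega_{\Eob{M},\Eob{N}})$ is epic, being a composite of reflexive coequalisers (here one uses that $V\oV\under$ preserves them, so $HL\oV\omega_{\Eob{M},\Eob{N}}$ is again a coequaliser). Hence it suffices to check that the proposed value $(\JJ_{J\Eob{L},\Eob{M}}\oE\Eob{N})\ci\Eob{\asso}_{J\Eob{L},J\Eob{M},\Eob{N}}$ equalises the two legs of the hexagon after precomposition with this epimorphism. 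I would substitute $\dot\gamma_{L,M,N}=(H_{HL,M}\oV N)\ci\asso_{HL,HM,N}$ from (\ref{dotgamma}), replace each $\omega$ by the corresponding $\tau$, and then interchange the $\oV$-coequalisers against the $\oE$-coequalisers using the square (\ref{J-H-square}), which relates the strengths of $\JJ$ and $H$, together with the generalised-morphism coherence (\ref{phi Gdot}) tying the strength $\tau$ of ${}_\phi\dot G$ to the actegory associator $\Eob{\asso}$. After these substitutions the identity reduces to the associator coherence of the ${}_E\V_E$-actegory ${}_E\M$ and the strength coherence of the ${}_E\V_E$-functor $\JJ$ --- the same computation that establishes (\ref{dotgamma}) in Proposition \ref{pro: dot}, now read over ${}_E\V_E$. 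The hard part will be exactly this bookkeeping of the two interlocking coequaliser systems; everything else is formal.

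Finally the invertibility statements drop out of the three formulas. Since ${}_E\M$ is a genuine (non-skew) ${}_E\V_E$-actegory, its unitor $\Eob{\luni}$ and associator $\Eob{\asso}$ are isomorphisms; thus $\Eob{\eta}=\Eob{\luni}$ is invertible, giving (i) (which also recovers the general invertibility of the left unitor from Theorem \ref{thm: hot}). For (ii), $\Eob{\eps}_{\Eob{M}}=\Eob{\ev}_{\Eob{R},\Eob{M}}$ is the counit of $J^*\dashv J$, and a counit is a natural isomorphism precisely when the right adjoint $J$ is fully faithful, i.e.\ when $R$ is a dense generator. For (iii), in $(\JJ_{J\Eob{L},\Eob{M}}\oE\Eob{N})\ci\Eob{\asso}_{J\Eob{L},J\Eob{M},\Eob{N}}$ the factor $\Eob{\asso}$ is invertible, so $\Eob{\gamma}$ is invertible if and only if $\JJ_{J\Eob{L},\Eob{M}}\oE\Eob{N}$ is, for all $\Eob{L},\Eob{M},\Eob{N}$.
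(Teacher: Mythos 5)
Your proposal is correct and follows essentially the same route as the paper: the same choice $\omega_{L,\Eob{M}}=\tau_{JL,\Eob{M}}$ identifying $\uot$ with $\JJ(\under)\oE\under$, the same reading of the triangles (\ref{diag: Eob eta eps}) to get $\Eob{\eta}=\Eob{\luni}$ and $\Eob{\eps}=\Eob{\ev}_{\Eob{R},\under}$, and the same verification that (\ref{formula ugamma}) satisfies the defining equation (\ref{def of ugamma}) via (\ref{dotgamma}), (\ref{phi Gdot}) and (\ref{J-H-square}), with the invertibility statements read off exactly as in the paper. Your framing of the result as the $\bullet$-construction of Proposition \ref{pro: dot} transported to the base $_E\V_E$ is a pleasant conceptual gloss, but the underlying computations coincide with the paper's.
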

\begin{proof}
The choice (\ref{omega via tau}) clearly entails the equality $L\uot\Eob{M}\ :=\ JL\oE \Eob{M}$.

If $\Eob{M}=\bra M,\lambda\ket$ then by the equality $\omega_{R,\Eob{M}}=\tau_{E,\Eob{M}}$ the unique arrow $u$ such that $u\ci\omega_{R,\Eob{M}}=\lambda$ is at the same time $\Eob{\eta}^{-1}_{\Eob{M}}$, just as in the proof of Theorem \ref{thm: hot} (i), and 
$\Eob{\luni}^{-1}_{\Eob{M}}$, by definition of the coherence isomorphism $\Eob{\luni}$ of the $_E\V_E$-actegory $_E\M$. This proves
(\ref{formula ueta}) from which (i) is obvious.

The counit $\Eob{\ev}_{\Eob{R},\Eob{M}}$ of $\JJ^*\dashv \JJ$ is related to the counit of $\under\oV R\dashv H$ by
\[
\dot G\Eob{\ev}_{\Eob{R},\Eob{M}}\ci\tau_{JM,\Eob{R}}\ =\ \ev_{R,M}\,.
\]
By (\ref{omega via tau}) and by the 2nd diagram of (\ref{diag: Eob eta eps}) the same equation defines $\Eob{\eps}_{\Eob{M}}$. This proves (\ref{formula ueps}) from which (ii) immediately follows.

In order to prove (\ref{formula ugamma}) we recall the proof of Corollary \ref{cor: uot} that $\Eob{\gamma}$ is defined by
the equation
\begin{equation} \label{def of ugamma}
\dot G\Eob{\gamma}_{\Eob{L},\Eob{M},\Eob{N}}\ci\omega_{\Eob{L},\Eob{M}\uot\Eob{N}}\ci
(\dot G\Eob{L}\bullet \omega_{\Eob{M},\Eob{N}})
=\omega_{\Eob{L}\uot\Eob{M},\Eob{N}}\ci(\omega_{\Eob{L},\Eob{N}}\bullet \dot G\Eob{N})
\ci\dot\gamma_{\dot G\Eob{L},\dot G\Eob{M},\dot G\Eob{N}}\,.
\end{equation}
It suffices to show that the expression (\ref{formula ugamma}) satisfies this equation.
Substituting $\omega_{\Eob{M},\Eob{N}}=\tau_{J\Eob{M},\Eob{N}}=\dot G_{J\Eob{M},\Eob{N}}$ and the expression (\ref{dotgamma}) into the LHS of (\ref{def of ugamma}) the claim follows from the following computation:
\begin{align*}
&\dot G(\JJ_{J\Eob{L},\Eob{M}}\oE\Eob{N}) \ci \dot G\Eob{\asso}_{J\Eob{L},J\Eob{M},\Eob{N}}\ci 
\dot G_{J\Eob{L},J\Eob{M}\oE \Eob{N}}\ci(\phi J\Eob{L}\oV\dot G_{J\Eob{M},\Eob{N}})\\
&\eqby{phi Gdot}\dot G(\JJ_{J\Eob{L},\Eob{M}}\oE\Eob{N}) \ci \dot G_{J\Eob{L}\oE J\Eob{M},\Eob{N}}\ci
(\phi_{J\Eob{L},J\Eob{M}}\oV \dot G\Eob{N})\ci
\asso_{\phi J\Eob{L},\phi J\Eob{M},\dot G\Eob{N}} \\
&=\dot G_{J(J\Eob{L}\oE \Eob{M}),\Eob{N}}\ci(\phi \JJ_{J\Eob{L},\Eob{M}}\oV\dot G\Eob{N})\ci 
(\phi_{J\Eob{L},J\Eob{M}}\oV \dot G\Eob{N})\ci\asso_{H\dot G\Eob{L}, H\dot G\Eob{M},\dot G\Eob{N}} \\
&\eqby{J-H-square}\dot G_{J(J\Eob{L}\oE \Eob{M}),\Eob{N}}\ci (H\dot G_{J\Eob{L},\Eob{M}}\oV\dot G\Eob{N})
\ci(H_{H\dot G\Eob{L},\dot G\Eob{M}}\oV \dot G\Eob{N})\ci\asso_{H\dot G\Eob{L}, H\dot G\Eob{M},\dot G\Eob{N}}
\end{align*}
which is the RHS of (\ref{def of ugamma}) by (\ref{dotgamma}). This finishes the proof of (\ref{formula ugamma}) from which (iii) is obvious.
\end{proof}

\begin{rmk}
The coherence conditions for the strength $\JJ_{\Eob{V},\Eob{M}}:\Eob{V}\oE J\Eob{M}\to J(\Eob{V}\oE\Eob{M})$ are
\begin{align}
\JJ_{\Eob{V}\oE\Eob{W},\Eob{M}}\ci\Eob{\asso}_{\Eob{V},\Eob{W},J\Eob{M}}
&=J\Eob{\asso}_{\Eob{V},\Eob{W},\Eob{M}}\ci \JJ_{\Eob{V},\Eob{W}\oE\Eob{M}}\ci(\Eob{V}\oE \JJ_{\Eob{W},\Eob{M}})\\
\JJ_{\Eob{E},\Eob{M}}\ci\Eob{\luni}_{J\Eob{M}}&=J\Eob{\luni}_{\Eob{M}}
\end{align}
where $\Eob{V},\Eob{W}\in\,_E\V_E$ and $\Eob{M}\in\,_E\M$. The second relation immediately implies that $\JJ_{\Eob{E},\Eob{M}}$ is an isomorphism 
and so is $\Eob{\gamma}_{\Eob{R},\Eob{M},\Eob{N}}$ for all $\Eob{M},\Eob{N}\in\,_E\M$. This is in complete agreement with invertibility of $\Eob{\eta}$
since the latter implies invertibility of $\Eob{\gamma}_{\Eob{R},\Eob{M},\Eob{N}}$ by the skew monoidal triangle axiom (\ref{SMC2}).

Similarly, if we assume invertibility of $\Eob{\ev}_{\Eob{R},\under}$ the $_E\V_E$-naturality relation (\ref{V-nat of ev}) implies
that $\JJ^*\JJ_{\Eob{V},\Eob{M}}$ is invertible for all $\Eob{V}\in\,_E\V_E$ and $\Eob{M}\in\,_E\M$ and therefore
$\Eob{\gamma}_{\Eob{L},\Eob{M},\Eob{R}}$ is an isomorphism for all $\Eob{L},\Eob{M}\in\,_E\M$.
This is again something that follows directly from invertibility of $\Eob{\eps}$ and the skew monoidal triangle (\ref{SMC3}).

Still assuming invertibility of $\Eob{\ev}_{\Eob{R},\under}$ the $\Eob{\coev}_{\Eob{R},J\Eob{M}}$ is an isomorphism by the adjunction relation. Hence the $_E\V_E$-naturality condition (\ref{V-nat of coev}) implies that 
\begin{equation}\label{monoidal sheaves}
\Eob{\coev}_{\Eob{R},J\Eob{L}\oE J\Eob{M}}\ \text{isomorphism}\quad\Leftrightarrow\quad
\JJ_{J\Eob{L},\Eob{M}}\ \text{isomorphism}
\end{equation}
for any pair of objects $\Eob{L},\Eob{M}\in\,_E\M$.

The latter equivalence together with the observation that the skew monoidal structure of Proposition \ref{pro: uot via J} is a special case of the Altenkirch-Chapman-Uustalu construction
\cite{Altenkirch-Chapman-Uustalu} will be the clue to characterize in Section \ref{monoidality} the situation of the underlying category being monoidal and 
$\JJ:\,_E\M\to \,_E\V_E$ being a strong monoidal functor.
\end{rmk}

\section{The forgetful functor}

This section is about the forgetful functor that, for a (right) $R$-bialgebroid $B$, maps a right $B$-module to the underlying $R$-$R$-bimodule. This functor is strong monoidal and one of the main questions of this paper is whether similar forgetful functors exist on module categories $\M^T$ of r2-exact skew monoidal $\V$-categories $\M$.

The target of the forgetful functor has been constructed in the previous section. Therefore the functor $\Forg$ we are looking for is of the type $\M^T\to\,_E\M$. Since both the target and source are Eilenberg-Moore categories of canonical monads, it seems natural to define $\Forg$ by lifting 
a very elementary functor $\M\to\M$ which merely connects $\smp$ with $\bullet$. 
\begin{lem} \label{lem: sigma}
Let $\bra\M,\smp,R\ket$ be a skew monoidal tensored $\V$-category and let $\bra\M,\bullet,R\ket$ be the skew monoidal $\V$-category structure associated to the tensored $\V$-category $\M$ and to the object $R$ by Proposition \ref{pro: dot}.
Then the identity functor $\M\to\M$ together with the $\V$-natural transformation
\[
\sigma_{M,N}:=\left(
\begin{CD}
M\bullet N@>HM\oV\eta_N>>HM\oV TN@>\Gamma_{HM,R,N}>>(HM\oV R)\smp N@>\ev_{R,M}\smp N>>M\smp N
\end{CD}
\right)
\]
and the identity arrow $R\to R$ is a skew monoidal $\V$-functor $\bra\M,\smp,R\ket\longrarr{\Sigma}\bra\M,\bullet,R\ket$.
\end{lem}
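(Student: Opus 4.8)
The plan is to verify directly the coherence axioms of a lax skew monoidal functor for the data $\bra\Sigma=\id,\sigma,\sigma_0=\id_R\ket$, together with the two strength–compatibility conditions that make $\sigma$ $\V$-natural. Writing $\smp$ for the source and $\bullet$ for the target product, and recalling from Proposition \ref{pro: dot} that $\dot\eta_M=(i_R\oV M)\ci\luni_M$, $\dot\eps_M=\ev_{R,M}$ and $\dot\gamma_{L,M,N}=(H_{HL,M}\oV N)\ci\asso_{HL,HM,N}$ with $H_{HL,M}=\chi_{HL,R,M}$, the three axioms to check are the associativity hexagon
\[
\gamma_{L,M,N}\ci\sigma_{L,M\smp N}\ci(HL\oV\sigma_{M,N})=\sigma_{L\smp M,N}\ci(H\sigma_{L,M}\oV N)\ci\dot\gamma_{L,M,N}
\]
and the two unit triangles $\sigma_{R,M}\ci\dot\eta_M=\eta_M$ and $\eps_M\ci\sigma_{M,R}=\dot\eps_M$, the comparison with $\sigma_0=\id_R$ having already been absorbed. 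Both sides of each equation have already-fixed source and target, so this is a matter of unwinding the three-step definition of $\sigma$ and applying the available coherence laws.

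I would dispatch the two unit triangles first, as they are short. For the right unit, expand $\sigma_{M,R}=(\ev_{R,M}\smp R)\ci\Gamma_{HM,R,R}\ci(HM\oV\eta_R)$ and slide $\ev_{R,M}$ past $\eps$ using naturality of $\eps$; the inner composite $\eps_{HM\oV R}\ci\Gamma_{HM,R,R}$ collapses to $HM\oV\eps_R$ by (\ref{sma-15}), and then $(HM\oV\eps_R)\ci(HM\oV\eta_R)=HM\oV(\eps_R\ci\eta_R)=\id$ by (\ref{SMC5}), leaving precisely $\ev_{R,M}=\dot\eps_M$. For the left unit, expand $\dot\eta_M$ and $\sigma_{R,M}$, bring $\eta_M$ to the front by naturality of $\luni$, push $i_R$ through $\Gamma$ by naturality of $\Gamma$ in its first variable, and collapse the resulting $\Gamma_{I,R,M}\ci\luni$ via (\ref{sma-7}); what survives is the scalar $\ev_{R,R}\ci(i_R\oV R)\ci\luni_R$ in the left leg, which equals $\id_R$ by the definition of the unit $i_R$ together with a triangle identity of the adjunction $\under\oV R\adj H$. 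Hence $\sigma_{R,M}\ci\dot\eta_M=\eta_M$.

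The associativity hexagon is the substantial step and the one I expect to be the main obstacle. Here I would substitute the three-step form of $\sigma$ into both sides and reconcile the two different places at which the unit $R$ is inserted. The right-hand side carries the strength $H_{HL,M}=\chi_{HL,R,M}$ of $H$ coming from $\dot\gamma$, whereas the left-hand side contains only $\gamma$, the strengths $\Gamma$, and evaluation maps; the bridge between them is the identity $c_{R,R,M}=H\ev_{R,M}\ci H_{HM,R}$ (a special case of (\ref{c via ev and H})) together with the $\V$-naturality (\ref{V-nat of ev}) of $\ev$ (and, if needed, (\ref{V-nat of coev}) for $\coev$), which let one trade $H$'s strength for evaluation maps. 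After this trade the comparison should reduce to the pentagon (\ref{SMC1}) for $\gamma$ and the strength–coherence (\ref{sma-11}) relating $\gamma$ and $\Gamma$, with the associativity (\ref{sma-6}) of $\Gamma$ used to normalise the occurrences of $\asso$. Matching the single $H$-strength on the right against the two evaluations produced by the two copies of $\sigma$ on the left is the delicate point, and I expect the remainder to be a lengthy but routine diagram chase once these ingredients are aligned.

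Finally, $\V$-naturality of $\sigma$ I would obtain by inheritance from its constituents. As a composite of $HM\oV\eta_N$, $\Gamma_{HM,R,N}$ and $\ev_{R,M}\smp N$, the transformation $\sigma$ is built from a strength and from transformations of actegory morphisms: $\eta$ and $\ev$ satisfy (\ref{sma-14}) and (\ref{V-nat of ev}) respectively, and $\Gamma$ is a strength. Since the action $\oV$ is functorial and strengths compose, the two required compatibilities of $\sigma$ with $\bra\Gamma,\dot\Gamma\ket$ and $\bra\Gamma',\dot\Gamma'\ket$ follow by assembling the corresponding laws for the pieces, with no difficulty beyond bookkeeping. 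Together with the three coherence axioms this shows that $\Sigma$ is a skew monoidal $\V$-functor.
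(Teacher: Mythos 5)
Your proposal is correct and follows the same route as the paper, whose entire proof consists of displaying the three functor axioms and asserting that they can be verified by a straightforward calculation; your verifications of the two unit triangles are exactly the intended ones. One correction to your forecast for the hexagon: the pentagon (\ref{SMC1}) never enters. Since the left-hand side carries $\eta_{M\smp N}$ (from $\sigma_{L,M\smp N}$) while the right-hand side carries $\eta_M$ (hidden inside $H\sigma_{L,M}$), the skew monoidal axiom actually needed is the $\eta$-triangle (\ref{SMC2}), which turns $\gamma_{R,M,N}\ci\eta_{M\smp N}$ into $\eta_M\smp N$; combined with (\ref{sma-11}), (\ref{sma-6}), (\ref{V-nat of ev}) and naturality, both sides then collapse to the common composite
\[
\bigl(\,(\sigma_{L,M}\ci(HL\oV\ev_{R,M})\ci\asso^{-1}_{HL,HM,R})\smp N\,\bigr)\ci\Gamma_{HL\oV HM,R,N}\ci\bigl((HL\oV HM)\oV\eta_N\bigr)\ci\asso_{HL,HM,N}\,,
\]
so the chase is indeed routine once this substitution is made.
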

\begin{proof}
The skew monoidal functor axioms
\begin{align}
\gamma_{L,M,N}\ci\sigma_{L,M\smp N}\ci(L\bullet\sigma_{M,N})
&=(\sigma_{L,M}\smp N)\ci\sigma_{L\bullet M,N}\ci\dot\gamma_{L,M,N}\\
\sigma_{R,M}\ci\dot\eta_M&=\eta_M\\
\eps_M\ci\sigma_{M,R}&=\dot\eps_M
\end{align}
can be verified by a straightforward calculation.
\end{proof}
By \cite[Lemma 2.7]{SMC} the above Lemma implies that
\[
\sigma_M:=\sigma_{R,M}\ :\ R\bullet M\to R\smp M
\]
is a monad morphism from $E\oV\under$ to $T$.
Therefore we define the forgetful functor $\Forg$ as the functor induced by $\sigma_M$ on the Eilenberg-Moore categories,
\begin{equation} \label{def of forg}
\Forg:\M^T\to\,_E\M,\quad \bra M,\alpha\ket\mapsto\bra M, \alpha\ci\sigma_M\ket\,.
\end{equation}
In other words, $\Forg$ is the lifting, in the sense of Theorem \ref{thm: lift}, of the functor defined in Lemma \ref{lem: sigma}.

\begin{thm} \label{thm: G}
Let $\M$ be an r2-exact skew monoidal tensored $\V$-category. Then the forgetful functor $\Forg$ defined in
(\ref {def of forg}) is a skew monoidal $\V$-functor making the diagram
\[
\begin{CD}
\M^T@>\Forg>>_E\M\\
@V{G}VV @VV{\dot G}V\\
\M@>\Sigma>>\M
\end{CD}
\]
strictly commutative as skew monoidal $\V$-functors. 
Furthermore, $\Forg$ is monadic as a $\V$-functor giving rise to a $\V$-monad (neither skew monoidal nor skew opmonoidal in general)  $\Eob{T}$ on $_E\M$ such that 
$\M^T\cong (\,_E\M)^{\Eob{T}}$.
\end{thm}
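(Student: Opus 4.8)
The statement has two halves: that $\Forg$ is a skew monoidal $\V$-functor fitting strictly into the square, and that $\Forg$ is $\V$-monadic with comparison monad $\Eob{T}$. The plan for the first half is to read $\Forg$ off the Lifting Theorem \ref{thm: lift}. By Lemma \ref{lem: sigma} the identity-on-objects functor $\Sigma:\bra\M,\smp,R\ket\to\bra\M,\bullet,R\ket$ is a skew monoidal $\V$-functor, and by Proposition \ref{pro: dot} its codomain $\bra\M,\bullet,R\ket$ is r2-exact: the hypotheses there, existence of reflexive coequalizers and their preservation by every $V\oV\under$, are exactly conditions (ii) and (iii) of Definition \ref{def: rex smc} for the r2-exact $\M$. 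As recorded just before the theorem, the monad morphism attached to $\Sigma$ is $\sigma_M:\dot T M\to TM$ and $\Forg$ is precisely its lift $\lf{\Sigma}$ of (\ref{def of forg}). Hence Theorem \ref{thm: lift} yields at one stroke that $\Forg$ is a skew monoidal $\V$-functor and that $\dot G\ci\Forg=\Sigma\ci G$ holds strictly, with $\dot G$ in the role of the target forgetful functor $G'$.

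For the second half I would recognize $\Forg$ as the algebraic functor $\sigma^*:\M^T\to\M^{\dot T}={}_E\M$ of the monad morphism $\sigma:\dot T\to T$ and verify the three hypotheses of the crude monadicity theorem \cite{Borceux}. \emph{(a) Left adjoint.} $\sigma_!$ sends an $E$-object $\bra M,\lambda\ket$ to the coequalizer in $\M^T$ of the reflexive pair of free $T$-algebras $F(E\oV M)\rightrightarrows FM$ with underlying arrows $R\smp\lambda$ and $\mu_M\ci(R\smp\sigma_M)$; it exists since $\M^T$ has reflexive coequalizers (Theorem \ref{thm: hot}), a common section being $F\dot\eta_M$. \emph{(b) Conservativity.} Since $\dot G\ci\Forg=G$ and $G$ is conservative, so is $\Forg$. \emph{(c) Preservation of reflexive coequalizers.} By r2-exactness $T=R\smp\under$ preserves reflexive coequalizers, so $G$ does (Lemma \ref{lem: M^T is V-cat}); the same Lemma applied to the r2-exact $\bra\M,\bullet,R\ket$ shows that $\dot G$ creates them. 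As $\dot G$ creates and $G=\dot G\ci\Forg$ preserves these coequalizers, a routine cancellation forces $\Forg$ to preserve them. Beck's theorem then makes $\Forg$ monadic, and the induced monad $\Eob{T}:=\Forg\,\sigma_!$ on ${}_E\M$ satisfies $\M^T\cong({}_E\M)^{\Eob{T}}$.

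It remains to upgrade this to the $\V$-enriched level. Here I would observe that every coequalizer occurring above is a $\V$-colimit: by Lemma \ref{lem: M^T is V-cat} (applied to $\smp$ on $\M^T$ and to $\bullet$ on ${}_E\M$) the tensors $V\oV\under$ preserve reflexive coequalizers, so $\sigma_!$ is a $\V$-functor, $\sigma_!\dashv\Forg$ is a $\V$-adjunction, $\Eob{T}$ is a $\V$-monad, and the comparison $\M^T\to({}_E\M)^{\Eob{T}}$ is an equivalence of $\V$-categories \cite{Kelly}. I expect this enriched bookkeeping to be the only real obstacle: one must ensure at each step that the left adjoint and the adjunction live in the $\V$-world, so that the monadicity theorem applies in its enriched form and the final equivalence is one of $\V$-categories rather than of underlying ordinary categories. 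Everything turns on the preservation of reflexive coequalizers by the tensors $V\oV\under$, which r2-exactness supplies through Lemma \ref{lem: M^T is V-cat}.
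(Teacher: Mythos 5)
Your proposal is correct. The first half coincides with the paper's own argument: $\Forg$ is recognized as the lift, in the sense of Theorem \ref{thm: lift}, of the skew monoidal $\V$-functor $\Sigma$ of Lemma \ref{lem: sigma} (with $\bra\M,\bullet,R\ket$ r2-exact by Proposition \ref{pro: dot}), which gives skew monoidality of $\Forg$ and strict commutativity of the square in one stroke. For the monadicity half your route differs in a mild but genuine way. The paper argues: since $\dot G\,\Forg=G$ with $G$ and $\dot G$ monadic and all categories having reflexive coequalizers, standard Beck-type arguments show $\Forg$ is monadic iff it has a left adjoint; existence of the left adjoint is then drawn from the adjoint lifting theorem \cite[Theorem 3.7.3]{TTT}, and the paper records the explicit formula $\F\Eob{M}=\bra R\smpq\Eob{M},\nabla_{\F\Eob{M}}\ket$ in terms of the quotient product (\ref{q coeq}) of the Appendix, enriching $\F$ by the invertible strength $\Gamma^{\prime q}_{V,R,\Eob{M}}$. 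You instead verify crude monadicity directly (left adjoint, conservativity via $\dot G\,\Forg=G$, preservation of reflexive coequalizers via creation by $\dot G$), constructing the left adjoint $\sigma_!$ by the standard coequalizer presentation for restriction along the monad morphism $\sigma:\dot T\to T$. The two left adjoints coincide: since $\Gamma'_{E,R,M}$ is invertible by r2-exactness, your reflexive pair $\bigl(R\smp\lambda,\ \mu_M\ci(R\smp\sigma_M)\bigr)$ on $R\smp(E\oV M)$ is the pair $(\lambda_2,\rho)$ defining $R\smpq\Eob{M}$ in (\ref{q coeq}) precomposed with that isomorphism (exactly the ``harmless composition with $\Gamma'$'' the paper notes under (\ref{def of nabla F})), so the coequalizers agree. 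What your route buys is self-containedness: no appeal to the Appendix or to an external adjoint lifting theorem, and the ``standard arguments'' the paper leaves implicit are spelled out. What the paper's route buys is the explicit link to the $\smpq$-structure and a slightly crisper enrichment step: the strength $\Gamma^{\prime q}$ of $\F$ is exhibited and seen to be invertible, so doctrinal adjunction in $\V$-$\ACT$ upgrades the ordinary adjunction to an enriched one, whereas your closing argument (the defining coequalizers are $\V$-colimits because the tensors $V\oV\under$ preserve them, hence everything enriches) is the same idea stated more informally; both treatments are at a comparable level of rigor on this point.
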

\begin{proof}
$\Forg$ is the lift in the sense of Theorem \ref{thm: lift} of the functor $\Sigma=\bra \M,\sigma,R\ket$ constructed in Lemma \ref{lem: sigma}. 
Therefore it is a skew monoidal $\V$-functor such that $\dot G\,\Forg=\Sigma G$. Merely as functors, however, we have $\dot G\,\Forg=G$ with both $G$ and $\dot G$ monadic.
Since all categories have reflexive coequalizers, it follows by standard arguments relying on Beck's Theorem that $\Forg$ is monadic precisely if it has a left adjoint. 

As for the existence of the left adjoint $\F$ we can rely on classical adjoint lifting theorems such as \cite[Theorem 3.7.3]{TTT}. It is interesting that
a left adjoint can be explicitely given in terms of the quotient skew monoidal product (\ref{q coeq}) as 
\begin{equation} \label{coeq rho lambda2}
\F:\,_E\M\to\M^T,\quad \F\Eob{M}=\bra R\smpq\Eob{M},\nabla_{\F\Eob{M}}\ket
\end{equation}
where the $T$-algebra structure is given by unique factorization in the diagram
\begin{equation}
\label{def of nabla F}
\parbox{300pt}{
\begin{picture}(300,120)
\put(-20,95){$E\oV(R\smp(R\smp M))$}
\put(70,96){\vector(1,0){60}} \put(95,84){$\lambda_3$}
\put(70,100){\vector(1,0){60}} \put(95,106){$\rho_2$}
\put(140,95){$ R\smp(R\smp M)$}
\put(200,98){\vector(1,0){60}} \put(215,105){$Tq_{R,\Eob{M}}$}
\put(270,95){$T(R\smpq\Eob{M})$}

\put(20,80){\vector(0,-1){50}} \put(25,54){$E\oV\mu_{R,M}$}
\put(160,80){\vector(0,-1){50}} \put(165,54){$\mu_{R,M}$}
\dashline{3}(290,80)(290,32)\put(290,32){\vector(0,-1){0}} \put(295,54){$\nabla_{\F\Eob{M}}$} 

\put(-5,10){$ E\oV (R\smp M)$}
\put(70,11){\vector(1,0){60}} \put(95,0){$\lambda_2$}
\put(70,15){\vector(1,0){60}} \put(95,21){$\rho$}
\put(147,10){$ R\smp M$}
\put(200,13){\vector(1,0){60}} \put(215,20){$ q_{R,\Eob{M}}$}
\put(275,10){$ R\smpq\Eob{M}$}
\end{picture}}
\end{equation}
where the notation $\rho_i$, $\lambda_i$ is that of Appendix \ref{sec: smpq}.
Notice that the first row is not exactly the $T$ of the second, it is modified by a harmless composition with $\Gamma'$ (which is an isomorphism due to one of the r2-exactness conditions). 

In order to make $\F$ a $\V$-functor it suffices to supply it with a strength $\F_{V,\Eob{M}}:V\oV\F\Eob{M}\to\F(V\oV\Eob{M})$. 
This can be done in the more general case of diagram (\ref{Gamma'^q}), so $\F_{V,\Eob{M}}=\Gamma^{\prime q}_{V,R,\Eob{M}}$.
This strength is in fact invertible, because $\Gamma'$ is, in complete agreement with the fact that a left adjoint actegory morphism must always be strong. The ordinary adjunction $\F\dashv\G$ follows from \cite[Theorem 3.7.3]{TTT} with the remark that the existence of reflexive coequalizers
suffices.
\end{proof}

In the remaining part of this section we study the question of when $\Forg$ is strong skew monoidal. Since $\Forg_0:\Eob{R}\to\Forg\alg{R}$ is the identity by (\ref{lift K0}) and by $\Sigma_0$ being the identity, the question reduces to studying $\Forg_{\alg{M},\alg{N}}$ the definition of which, according to (\ref{lift K2}), can be read from the diagram
\begin{equation} \label{diag: G2}
\parbox{300pt}{
\begin{picture}(300,120)
\put(0,95){$M\bullet(R\bullet N)$}
\put(70,96){\vector(1,0){60}} \put(80,84){$M\bullet\lambda_{\Forg\alg{N}}$}
\put(70,100){\vector(1,0){60}} \put(85,106){$\dot\mu_{M,N}$}
\put(143,95){$ M\bullet N$}
\put(190,98){\vector(1,0){60}} \put(205,105){$\omega_{M,\Forg\alg{N}}$}
\put(260,95){$M\uot\Forg\alg{N}$}

\put(10,80){\vector(0,-1){50}} \put(15,54){$\sigma_{M,TN}\ci(M\bullet\sigma_N)$}
\put(160,80){\vector(0,-1){50}} \put(165,54){$\sigma_{M,N}$}
\dashline{3}(280,80)(280,32)\put(280,32){\vector(0,-1){0}} \put(285,54){$\Forg_{M,\alg{N}}$} 

\put(-5,10){$M\smp(R\smp N)$}
\put(70,11){\vector(1,0){60}} \put(80,0){$M\smp\nabla_{\alg{N}}$}
\put(70,15){\vector(1,0){60}} \put(85,21){$\mu_{M,N}$}
\put(147,10){$ M\smp N$}
\put(190,13){\vector(1,0){60}} \put(205,20){$\pi_{M,\alg{N}}$}
\put(265,10){$ M\hot\alg{N}$}
\end{picture}}
\end{equation}
by lifting  $\Forg_{M,\alg{N}}\in\M$ to $\Forg_{\alg{M},\alg{N}}\in\,_E\M$.
The unlifted $\Forg_{M,\alg{N}}$-s are the components of a strength for the identity functor $\id_\M$ that forms, together with the skew monoidal $\G$, an actegory morphism from the right $\M^T$-actegory $\M$ to the right $_E\M$-actegory $\M$ (a right actegory version of Definition 
\ref{def: gen act mor}).
Let us introduce the notation
\[
\lambda_{M,N}:=(\ev_{R,M}\smp N)\ci\Gamma_{HM,R,N}\ :\ M\bullet (R\smp N)\to M\smp N
\]
so that $\sigma_{M,N}=\lambda_{M,N}\ci (M\bullet \eta_N)$. The special case $\lambda_{R,N}:E\oV TN\to TN$ returns the $E$-object obtained from the free $T$-algebra 
$FN=\bra TN,\mu_N\ket$ by applying $\G$. Indeed, $\mu_N\ci\sigma_{TN}=\lambda_{R,N}$. In fact more is true:
\begin{align}
\notag
\mu_{M,N}\ci\sigma_{M,TN}
&=(\eps_M\smp N)\ci\gamma_{M,R,N}\ci(\ev_{R,M}\smp TN)\ci\Gamma_{HM,R,TN}\ci(HM\oV\eta_{TN})=\\
\notag
&=(\eps_M\smp N)\ci((\ev_{R,M}\smp R)\smp N)\ci\gamma_{HM\oV R,R,N}\ci\Gamma_{HM,R,R\smp N}\ci (HM\oV\eta_{TN})=\\
\notag
&\eqby{sma-11}(\ev_{R,M}\smp N)\ci(\eps_{HM\oV R}\smp N)\ci(\Gamma_{HM,R,R}\smp N)\ci\Gamma_{HM,R\smp R,N}\ci\\
\notag
&\ci (HM\oV\gamma_{R,R,N})\ci (HM\oV\eta_{TN})=\\
\notag
&\stackrel{(\ref{SMC2})+(\ref{sma-15})} {=}
(\ev_{R,M}\smp N)\ci((HM\oV\eps_R)\smp N)\ci\Gamma_{HM,R\smp R,N}\ci(HM\oV(\eta_R\smp N))=\\
\notag
&=(\ev_{R,M}\smp N)\ci\Gamma_{HM,R,N}=\\
\label{lambda-mu-sigma}
&=\lambda_{M,N}
\end{align}
for all objects $M$, $N$ of $\M$.

\begin{pro} \label{pro: G strong = lambda coeq}
For an r2-exact skew monoidal tensored $\V$-category $\M$ the
strength $\Forg_{M,\alg{N}}:M\uot\Forg\alg{N}\to M\hot\alg{N}$ is invertible
if and only if $\lambda_{M,N}$ is a coequalizer of the reflexive pair
\[
M\bullet(R\bullet TN)\longpair{\dot\mu_{M,TN}}{M\bullet\lambda_{R,N}}M\bullet TN
\]
for all $M,N\in\M$.
\end{pro}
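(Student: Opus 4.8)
The plan is to reduce the claim to the case of free $T$-algebras and there identify $M\uot\Forg(\under)$ with the coequalizer displayed in the statement. First I would fix $M\in\M$ and regard $\Forg_{M,\under}$ as a $\V$-natural transformation between the two functors $M\uot\Forg(\under)$ and $M\hot\under$ from $\M^T$ to $\M$; it is exactly the second-variable strength of the actegory morphism built from $\id_\M$ and $\Forg$. Both functors preserve reflexive coequalizers: $M\hot\under$ by Theorem \ref{thm: hot}(iii), and $M\uot\Forg(\under)$ because $M\uot\under$ does (Theorem \ref{thm: hot}(iii) applied to $\bra\M,\bullet,R\ket$) and $\Forg$ does, the latter since $\dot G\,\Forg=G$ with $G$ and $\dot G$ both creating reflexive coequalizers (Theorem \ref{thm: G}). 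As every $\alg{N}=\bra N,\alpha\ket$ is the reflexive coequalizer of a pair $F(TN)\rightrightarrows FN$ of free algebras, a natural transformation between two such functors is invertible at $\alg{N}$ as soon as it is invertible at $FN$ and $F(TN)$. Hence it suffices to prove that $\Forg_{M,FN}$ is invertible for all $M,N$ if and only if $\lambda_{M,N}$ is the asserted coequalizer for all $M,N$.

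For the free case I would specialise diagram (\ref{diag: G2}) to $\alg{N}=FN=\bra TN,\mu_N\ket$. Since $\mu_N\ci\sigma_{TN}=\lambda_{R,N}$, one has $\Forg FN=\bra TN,\lambda_{R,N}\ket$, so the top row of (\ref{diag: G2}) becomes precisely the coequalizer of
\[
M\bullet(R\bullet TN)\longpair{\dot\mu_{M,TN}}{M\bullet\lambda_{R,N}}M\bullet TN
\]
with coequalizing arrow $\omega_{M,\Forg FN}$ onto $M\uot\Forg FN$, i.e. the reflexive pair of the statement. On the target side $M\hot FN\cong M\smp N$ through the isomorphism $j_{M,N}$ of (\ref{the j}). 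The commutativity built into (\ref{diag: G2}) gives $\Forg_{M,FN}\ci\omega_{M,\Forg FN}=\pi_{M,FN}\ci\sigma_{M,TN}$, and composing on the left with $j_{M,N}$ and using $j_{M,N}\ci\pi_{M,TN}=\mu_{M,N}$ together with the identity (\ref{lambda-mu-sigma}) I obtain
\[
(j_{M,N}\ci\Forg_{M,FN})\ci\omega_{M,\Forg FN}=\lambda_{M,N}.
\]

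Finally I would apply the elementary observation that if $e$ is a coequalizer of a parallel pair and $f=k\ci e$, then $f$ is again a coequalizer of that pair if and only if $k$ is invertible. Taking $e=\omega_{M,\Forg FN}$, $f=\lambda_{M,N}$ and $k=j_{M,N}\ci\Forg_{M,FN}$, and recalling that $j_{M,N}$ is invertible, this shows that $\Forg_{M,FN}$ is invertible exactly when $\lambda_{M,N}$ is a coequalizer of the displayed pair; combined with the reduction of the first paragraph this proves the Proposition. I expect the only delicate point to be the verification in the first paragraph that $\Forg$ preserves reflexive coequalizers, so that the passage from free algebras to arbitrary $\alg{N}$ is legitimate; once (\ref{diag: G2}) and (\ref{lambda-mu-sigma}) are available, the free-algebra computation is essentially forced.
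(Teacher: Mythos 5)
Your proposal is correct and follows essentially the same route as the paper: the identical free-algebra computation combining (\ref{diag: G2}), (\ref{the j}) and (\ref{lambda-mu-sigma}) to obtain $j_{M,N}\ci\Forg_{M,FN}\ci\omega_{M,\Forg FN}=\lambda_{M,N}$, and the same reduction from arbitrary $T$-algebras to free ones via the canonical presentation and preservation of reflexive coequalizers by the functors involved. The only (immaterial) difference is that the paper establishes the needed coequalizer on the $\uot$-side by noting that the canonical presentation is $G$-contractible and invoking monadicity of $\dot G$, whereas you prove the marginally stronger claim that $\Forg$ preserves all reflexive coequalizers; both arguments rest on $\dot G\,\Forg=G$ together with the creation properties of $G$ and $\dot G$, so this is a variant of detail, not of method.
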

\begin{proof}
Inserting a free $T$-algebra $FN=\bra TN,\mu_N\ket$ for $\alg{N}$ in the definition (\ref{diag: G2}) of $\Forg_{M,\alg{N}}$ and composing it with 
the isomorphism (\ref{the j}) we obtain by (\ref{lambda-mu-sigma}) that
\[
j_{M,N}\ci\Forg_{M,FN}\ci\omega_{M,\Forg FN}=j_{M,N}\ci\pi_{M,FN}\ci\sigma_{M,TN}=\mu_{M,N}\ci\sigma_{M,TN}=\lambda_{M,N}\,.
\]
Therefore $\Forg_{M,FN}$ is invertible precisely if $\lambda_{M,N}$ is a coequalizer of the above pair. 
In this way we reduced the problem to prove invertibility of $\Forg_{M,\alg{N}}$ for all $\alg{N}\in\M^T$ whenever invertibility of $\Forg_{M,FN}$ is known for all $N\in\M$. 
Writing any $T$-algebra $\alg{N}$ as a reflexive coequalizer of free $T$-algebras and using naturality of $\Forg_{M,\alg{N}}$ we have a commutative diagram
\begin{equation}
\parbox{300pt}{
\begin{picture}(300,120)
\put(5,95){$M\uot\Forg FTN$}
\put(70,96){\vector(1,0){60}} \put(85,84){$\sst M\uot \Forg T\nabla_{\alg{N}}$}
\put(70,100){\vector(1,0){60}} \put(85,106){$\sst M\uot\Forg\mu_{N}$}
\put(138,95){$ M\uot \Forg FN$}
\put(190,98){\vector(1,0){60}} \put(205,105){$\sst M\uot\Forg\nabla_{\alg{N}}$}
\put(260,95){$M\uot\Forg\alg{N}$}

\put(20,80){\vector(0,-1){50}} \put(25,54){$\sst\Forg_{M,FTN}$}
\put(160,80){\vector(0,-1){50}} \put(165,54){$\sst\Forg_{M,FN}$}
\put(275,80){\vector(0,-1){50}} \put(280,54){$\sst\Forg_{M,\alg{N}}$} 

\curvedashes[1mm]{0,1,2}
\put(290,25){\curve(0,0, 30,20, 30,40, 0,60)}
\curvedashes{} 
\put(290,85){\vector(-2,1){0}}

\put(5,10){$M\hot FTN$}
\put(70,11){\vector(1,0){60}} \put(85,0){$\sst M\hot T\nabla_{\alg{N}}$}
\put(70,15){\vector(1,0){60}} \put(85,21){$\sst M\hot\mu_{N}$}
\put(142,10){$ M\hot FN$}
\put(190,13){\vector(1,0){60}} \put(205,20){$\sst M\hot\nabla_{\alg{N}}$}
\put(265,10){$ M\hot\alg{N}$}
\end{picture}}
\end{equation}
The second row is a coequalizer since $\hot$ is r2-exact and this leads to a unique (dashed) arrow that makes the diagram, with $\Forg_{M,\alg{N}}$ removed, commutative.
The first row is the result of applying $M\uot\Forg\under$ to a reflexive coequalizer which is in fact $G$-contractible. Using the relation $\dot G\Forg=G$ we see that $\Forg$ maps
$G$-contractible pairs to $\dot G$-contractible ones, so by monadicity of $\dot G$ the arrow $\Forg\nabla_{\alg{N}}$ is the coequalizer of a reflexive pair in $_E\M$.
By r2-exactness of $\uot$ we conclude that the first row is also a coequalizer, hence the dashed arrow is necessarily the inverse of $\Forg_{M,\alg{N}}$.
\end{proof}

\begin{cor} \label{cor: G strong}
Let $\bra\M,\smp,R\ket$ be an r-exact skew monoidal $\V$-category in which $R$ is a dense generator for $\M$. 
Then $\Forg:\M^T\to\,_E\M$ is a strong skew monoidal functor.
\end{cor}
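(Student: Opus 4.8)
The plan is to invoke Proposition~\ref{pro: G strong = lambda coeq} to convert strongness of $\Forg$ into a coequalizer statement, and then to verify that statement from density of $R$ together with r1-exactness. First note that the unit constraint $\Forg_0$ is the identity by Theorem~\ref{thm: G}, and that the multiplicativity constraint $\Forg_{\alg M,\alg N}$ is, by construction, the lift of the strength $\Forg_{M,\alg N}$ along the conservative (monadic) forgetful functor $\dot G\colon{}_E\M\to\M$. Hence $\Forg$ is strong skew monoidal precisely when $\Forg_{M,\alg N}$ is invertible for all $M\in\M$ and all $\alg N\in\M^T$, and by Proposition~\ref{pro: G strong = lambda coeq} this is equivalent to $\lambda_{M,N}$ being a coequalizer of the reflexive pair $\dot\mu_{M,TN},\,HM\oV\lambda_{R,N}\colon HM\oV(E\oV TN)\rightrightarrows HM\oV TN$ for all $M,N$. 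So it remains only to prove the latter.

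The first step is to exhibit the evaluation $\ev_{R,M}\colon HM\oV R\to M$ as a coequalizer. The maps $(c_{R,R,M}\oV R)\ci\asso_{HM,E,R}$ and $HM\oV\ev_{R,R}$ out of $HM\oV(E\oV R)$ form the reflexive pair whose coequalizer computes $JM\oE\Eob R=HM\oE R$, and $\ev_{R,M}$ coequalizes it by the compatibility of $\ev$ with the composition $c$ that follows from (\ref{c via ev and H}). The induced factorization $HM\oE R\to M$ is the counit of the adjunction $J^*\dashv J$, which is invertible exactly because $R$ is a dense generator. Therefore
\[
HM\oV(E\oV R)\rightrightarrows HM\oV R\xrightarrow{\ \ev_{R,M}\ }M
\]
is a coequalizer, with the two left-hand arrows as above.

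The second step applies $\under\smp N$, which preserves reflexive coequalizers by r1-exactness; thus $\ev_{R,M}\smp N$ is a coequalizer of the pair $((c_{R,R,M}\oV R)\ci\asso_{HM,E,R})\smp N$ and $(HM\oV\ev_{R,R})\smp N$. Finally I would transport this coequalizer along the structure isomorphisms $\Gamma$, invertible by r1-exactness. Writing $\Phi:=\Gamma_{HM,E\oV R,N}\ci(HM\oV\Gamma_{E,R,N})$ and using $\lambda_{M,N}=(\ev_{R,M}\smp N)\ci\Gamma_{HM,R,N}$ together with $\dot\mu_{M,TN}=(c_{R,R,M}\oV TN)\ci\asso_{HM,E,TN}$ (computed as in Corollary~\ref{cor: uot} from (\ref{dotgamma})--(\ref{doteps})), the two identities
\[
\Gamma_{HM,R,N}\ci\dot\mu_{M,TN}=\bigl(((c_{R,R,M}\oV R)\ci\asso_{HM,E,R})\smp N\bigr)\ci\Phi
\]
\[
\Gamma_{HM,R,N}\ci(HM\oV\lambda_{R,N})=\bigl((HM\oV\ev_{R,R})\smp N\bigr)\ci\Phi
\]
hold: the first reduces to naturality of $\Gamma$ in its first argument followed by the pentagon (\ref{sma-6}), the second to naturality of $\Gamma$ in its middle argument. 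Since $\Gamma_{HM,R,N}$ and $\Phi$ are invertible, these identities exhibit the pair $\dot\mu_{M,TN},\,HM\oV\lambda_{R,N}$ with cofork $\lambda_{M,N}$ as isomorphic to the coequalizing data of the previous step; hence $\lambda_{M,N}$ is the required coequalizer.

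The main obstacle is this last step: threading $\dot\mu_{M,TN}$ and $HM\oV\lambda_{R,N}$ through the chain of $\Gamma$-isomorphisms and recognizing them as the image under $\under\smp N$ of the two structure maps of the $\oE$-coequalizer, which is exactly where the pentagon coherence (\ref{sma-6}) and the explicit form of $\dot\mu$ enter. By contrast, density is used at a single point only, to turn $\ev_{R,M}$ into an honest coequalizer, while every invertibility employed in the transport is supplied by r1-exactness.
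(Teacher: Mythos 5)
Your proof is correct and follows essentially the same route as the paper: both reduce strongness to the coequalizer criterion of Proposition~\ref{pro: G strong = lambda coeq}, exhibit $\ev_{R,M}$ as a reflexive coequalizer via density (invertibility of the counit of $J^*\dashv J$), apply $\under\smp N$ using r1-exactness, and transport along the invertible $\Gamma$'s. The paper's proof is just the commutative diagram (\ref{lambda as coeq}) stated tersely; your explicit verification of its left square via naturality of $\Gamma$ and the pentagon (\ref{sma-6}) is exactly what the paper leaves implicit.
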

\begin{proof}
The statement follows by considering the commutative diagram
\begin{alignat}{2}
&HL\oV(E\oV TM)\ \ \longerpair{(c_{R,R,L}\oV TM)\ci\asso_{HL,E,TM}}{HL\oV\lambda_{R,M}}
&HL\oV TM \qquad\longrarr{\lambda_{L,M}}\quad
& L\smp M \notag\\ 
\label{lambda as coeq}
&\qquad\quad\parbox[c]{5pt}{\begin{picture}(5,40)\put(2,40){\vector(0,-1){40}}\end{picture}}\sst \Gamma\ci(1\oV \Gamma)
&\parbox[c]{5pt}{\begin{picture}(5,40)\put(2,40){\vector(0,-1){40}}\end{picture}}
\sst \Gamma\qquad\qquad\qquad\qquad\quad
&\quad\parbox[c]{5pt}{\begin{picture}(5,40)\put(0,40){\line(0,-1){40}}\put(3,40){\line(0,-1){40}}\end{picture}}\\
&(HL\oV(E\oV R))\smp M\, \longerpair{[(c_{R,R,L}\oV R)\ci\asso]\smp M}{(HL\oV\ev_{R,R})\smp M}
&(HL\oV R)\smp M\quad \longrarr{\ev_{R,L}\smp M}\quad
&L\smp M \notag
\end{alignat}
in which the 2nd row is obtained by applying $\under\smp M$ to a reflexive coequalizer. The vertical arrows are all isomorphisms
by the assumption that $\smp$ is strong in the 1st argument. Thus the first row is a coequalizer, too, and Proposition \ref{pro: G strong = lambda coeq} applies.
\end{proof}

The following Theorem just summarizes the content of this Section.
\begin{thm} \label{thm: G strong}
Let $\bra\M,\smp,R\ket$ be an r-exact skew monoidal tensored $\V$-category and assume that $R$ is a dense generator for $\M$. 
Then the forgetful functor $\Forg:\M^T\to\,_E\M$ is strong skew monoidal and monadic and $\Eob{T}=\Forg\F$ is a skew opmonoidal $\V$-monad on $_E\M$.
\end{thm}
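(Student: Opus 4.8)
The plan is to read off the three assertions from results already in place, the only genuinely new point being the skew opmonoidal structure of the monad $\Eob{T}$. The strong skew monoidal and monadic claims are immediate: Corollary \ref{cor: G strong} yields that $\Forg$ is strong skew monoidal under the present hypotheses (r-exactness together with $R$ a dense generator), while Theorem \ref{thm: G} already supplies monadicity of $\Forg$ as a $\V$-functor, its left adjoint $\F$ from (\ref{coeq rho lambda2}), and the $\V$-monad $\Eob{T}=\Forg\F$ with $\M^T\cong(\,_E\M)^{\Eob{T}}$. Thus it remains only to equip $\Eob{T}$ with a skew opmonoidal structure compatible with its monad structure, which is precisely the feature unavailable in the general case of Theorem \ref{thm: G} and made available here by the passage from skew monoidal to \emph{strong} skew monoidal.

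For this I would invoke doctrinal adjunction \cite{Kelly-doc-adj} for $\F\dashv\Forg$. Because $\Forg$ is strong skew monoidal, its constraints $\Forg_2$ and $\Forg_0$ are invertible, and inverting them also exhibits $\Forg$ as a skew opmonoidal $\V$-functor. Taking mates along $\F\dashv\Forg$ then endows $\F$ with a skew opmonoidal structure, with constraints $\F_2:\F(\Eob{M}\uot\Eob{N})\to\F\Eob{M}\hot\F\Eob{N}$ and $\F_0:\F\Eob{R}\to\alg{R}$ arising as the mates of $\Forg_2$ and $\Forg_0$; these are $\V$-natural since the whole adjunction is a $\V$-adjunction. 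The composite $\Eob{T}=\Forg\F$ of the two skew opmonoidal $\V$-functors is therefore a skew opmonoidal $\V$-functor, with opmonoidal constraint $\Eob{T}_2=\Forg_2^{-1}\ci\Forg\F_2:\Eob{T}(\Eob{M}\uot\Eob{N})\to\Eob{T}\Eob{M}\uot\Eob{T}\Eob{N}$ and counit $\Eob{T}_0=\Forg\F_0:\Forg\F\Eob{R}\to\Forg\alg{R}=\Eob{R}$, the last equality holding because $\Forg_0$ is the identity.

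The remaining and only substantial step is to check that the monad unit and multiplication of $\Eob{T}$ are opmonoidal transformations, so that $\bra\Eob{T},\Eob{T}_2,\Eob{T}_0\ket$ together with its monad data forms a genuine skew opmonoidal $\V$-monad rather than a skew opmonoidal functor that merely happens to carry a monad; I expect this compatibility to be the main obstacle, everything else being citation. It follows from the coherence of the mate correspondence: once $\Forg$ is strong, the unit and counit of $\F\dashv\Forg$ are opmonoidal transformations, and the monad unit and multiplication of $\Eob{T}=\Forg\F$ are assembled from them, whence the opmonoidal monad axioms hold. Equivalently, one may argue by reconstruction: under $\M^T\cong(\,_E\M)^{\Eob{T}}$ the product $\hot$ of Theorem \ref{thm: hot} is transported to the canonical skew monoidal structure on the Eilenberg--Moore category of the opmonoidal monad $\Eob{T}$, and $\Eob{T}_2$, $\Eob{T}_0$ are the unique constraints making the strong monoidal $\Forg$ the corresponding forgetful functor.
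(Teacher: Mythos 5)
Your proposal is correct and matches the paper's intended argument: the paper gives no separate proof, remarking only that the theorem ``just summarizes the content of this Section,'' i.e., strong skew monoidality comes from Corollary \ref{cor: G strong}, monadicity and the left adjoint $\F$ from Theorem \ref{thm: G}, and the skew opmonoidal structure on $\Eob{T}=\Forg\F$ is the standard doctrinal-adjunction consequence of $\Forg$ being strong (mates turn the inverted constraints of $\Forg$ into a skew opmonoidal structure on $\F$, and the unit and counit of $\F\dashv\Forg$ become opmonoidal transformations, so the monad is opmonoidal). Your write-up simply supplies the details the paper leaves implicit, using the same ingredients and the same decomposition.
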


\section{Self-cocompleteness and monoidality} \label{monoidality}

Assuming the forgetful functor $\Forg:\M^T\to\,_E\M$ is strong skew monoidal the question of whether $\hot$ on $\M^T$ is a monoidal product 
can be answered affirmatively provided $\uot$ on $_E\M$ is monoidal. The latter being independent of the original skew monoidal product $\smp$ on $\M$ we see that it is only the object $R$ in the bare $\V$-category $\M$ which we should study.

We consider a somewhat more general problem by replacing $E$ with a small $\V$-category $\C$. In this way we will be able to
comment also about the Altenkirch-Chapman-Uustalu skew monoidal structure on the functor category $[\C,\M]$. 
Also we relax, or just forget about, the existence of (arbitrary) tensors in $\M$. But we will need certain weighted colimits to exist in $\M$. In the absence of tensors the colimit $U\star F$ of $F:\C\to\M$ weighted by $U:\C^\op\to\V$ cannot be represented by a coend $\int^CUC\oV FC$. Instead it is defined \cite{Kelly} as the representing object in
\[
\M(U\star F,M)\cong[\C^\op,\V](U,\M(F\under,M)),\quad M\in\M.
\]

\begin{defi}
Let $\C$ be a small $\V$-category. A full replete subcategory $\W\subseteq[\C^\op,\V]$ is called self-cocomplete if
\begin{description}
\item[(sc-1)] $\W$ contains all the representable functors $\C(\under,C)$, $C\in\C$ and
\item[(sc-2)] $\W$ is closed under $\W$-weighted colimits (of $\V$-functors $\C\to\W$).
\end{description}
\end{defi}
\begin{rmk} \label{rmk: sc}
Since $\W$ is full, condition (sc-1) is equivalent to that the Yoneda embedding $C\mapsto YC=\C(\under,C)$ factors through the inclusion $\W\subseteq[\C^\op,\V]$ via a unique (fully faithful)
$Y_\W:\C\to\W$.
Condition (sc-2), in turn, says that $U\star F$, which always exists in $[\C^\op,\V]$ and can be represented there as a pointwise coend $D\mapsto\int^CUC\oV FCD$, belongs to $\W$ 
for all $U\in\W$ and $F\in[\C,\W]$. 
Using the fact that fully faithful functors reflect colimits the weighted colimit $U\star F$ is not only a colimit in $[\C^\op,\V]$ but also in $\W$, provided $U\in\W$, $F\in[\C,\W]$.
Thus (sc-2) can also be read as that $\W$ has $\W$-weighted colimits and the inclusion $\W\subseteq[\C^\op,\V]$ preserves them. Even if colimits, other than $\W$-weighted ones, 
happen to exist in $\W$ these need not be preserved by the inclusion.
\end{rmk}
\begin{lem} \label{lem: Eob(W)}
For $\W\subseteq[\C^\op,\V]$ a self-cocomplete full replete subcategory and for a $\V$-functor $F:\C\to[\C^\op,\V]$ the following conditions are equivalent:
\begin{enumerate}
\item $U\star F\in\W$ for all $U\in\W$.
\item $FC\in\W$ for all $C\in\C$.
\end{enumerate}
\end{lem}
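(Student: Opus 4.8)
The plan is to prove the two implications separately. The implication (ii)$\Rightarrow$(i) is essentially a restatement of the closure axiom (sc-2), whereas (i)$\Rightarrow$(ii) hinges on the density (co-Yoneda) formula, which identifies each value $FC$ with a weighted colimit of $F$ whose weight is representable, hence in $\W$ by (sc-1).

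First I would treat (ii)$\Rightarrow$(i). Assuming $FC\in\W$ for every $C\in\C$, fullness of $\W$ lets $F$ corestrict to a $\V$-functor $F':\C\to\W$ with $F=\iota F'$, where $\iota:\W\into[\C^\op,\V]$ is the inclusion. For any $U\in\W$ the weighted colimit $U\star F$, computed pointwise in $[\C^\op,\V]$, is then precisely the colimit of a $\V$-functor $\C\to\W$ weighted by an object of $\W$; by (sc-2) it lies in $\W$. This direction is immediate and needs no further argument.

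For (i)$\Rightarrow$(ii) the key observation is that for each fixed $C\in\C$ the density formula supplies a canonical isomorphism
\[
\C(\under,C)\star F\ \cong\ FC
\]
in $[\C^\op,\V]$. I would justify it from the defining universal property of the weighted colimit together with the enriched Yoneda lemma: for every $M\in[\C^\op,\V]$,
\[
[\C^\op,\V]\bigl(\C(\under,C)\star F,\,M\bigr)\ \cong\ [\C^\op,\V]\bigl(\C(\under,C),\,[\C^\op,\V](F\under,M)\bigr)\ \cong\ [\C^\op,\V](FC,M),
\]
the last step being Yoneda applied to the presheaf $C'\mapsto[\C^\op,\V](FC',M)$; naturality in $M$ then yields the displayed isomorphism. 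Since $\C(\under,C)\in\W$ by (sc-1), hypothesis (i) applied to the weight $U=\C(\under,C)$ gives $\C(\under,C)\star F\in\W$, and repleteness of $\W$ transports this along the isomorphism to conclude $FC\in\W$.

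There is no serious obstacle here, as both directions are formal. The only point deserving care is the density identification $\C(\under,C)\star F\cong FC$ together with the bookkeeping of variances: the representable weight $\C(\under,C):\C^\op\to\V$ is exactly the weight that selects evaluation at $C$, so that (sc-1) furnishes precisely the family of weights needed to probe $F$ objectwise. Once this is in place, the equivalence is simply the statement that testing the membership $U\star F\in\W$ against all weights $U\in\W$ is the same as testing it against the representable weights alone.
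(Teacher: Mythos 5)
Your proposal is correct and follows essentially the same route as the paper: the implication (ii)$\Rightarrow$(i) is obtained by corestricting $F$ through the full inclusion and invoking (sc-2), while (i)$\Rightarrow$(ii) sets $U$ to be the representable $\C(\under,C)$ and uses the isomorphism $\C(\under,C)\star F\cong FC$ together with (sc-1) and repleteness. Your write-up merely spells out the Yoneda justification of that isomorphism, which the paper leaves implicit.
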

\begin{proof}
Since $\W$ is full, (ii) implies that $F$ factors through the inclusion $\W\subseteq[\C^\op,\V]$ therefore (i) follows from assumption (sc-2) on $\W$.
Vice versa, if (i) holds then setting $U$ to be representable $FC\cong YC\star F\in\W$ for all $C$.
\end{proof}

\begin{cor} \label{cor: Eob(W)}
For $\W\subseteq[\C^\op,\V]$ a full, replete and self-cocomplete subcategory define $\Eob{\W}:=[\C,\W]$. Then $\Eob{\W}$ is a monoidal subcategory of the bimodule category
$[\C\oV\C^\op,\V]$ via the embedding
\[
[\C,\W] \subseteq [\C,[\C^\op,\V]]\cong [\C\oV\C^\op,\V]
\]
and $\W$ is a right $\Eob{\W}$-actegory via the weighted colimit operation $\star$.
\end{cor}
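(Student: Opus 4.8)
The plan is to exhibit both the monoidal structure on $\Eob{\W}$ and the action on $\W$ as the restrictions to $\W$ and $\Eob{\W}=[\C,\W]$ of the standard composition of $\V$-profunctors on $[\C\oV\C^\op,\V]\cong[\C,[\C^\op,\V]]$ and of its action on the presheaf category $[\C^\op,\V]$. Self-cocompleteness is then exactly the hypothesis that keeps these operations inside $\W$ and $\Eob{\W}$.

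First I would record the formulas. Under the identification, a bimodule $F$ is the $\V$-functor $C\mapsto FC\in[\C^\op,\V]$, the monoidal unit is the restricted Yoneda embedding $Y\colon C\mapsto\C(\under,C)$, the composition product is given pointwise by the weighted colimit
\[
(F\oE G)C\ :=\ FC\star G\ =\ \int^{D}(FC)(D)\oV GD\,,
\]
and the right action of a bimodule $F$ on a presheaf $U$ is $U\star F=\int^{C}UC\oV FC$. That these agree with composition and action in the bicategory of $\V$-profunctors is a routine coend verification.

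Next I would prove closure. By Lemma \ref{lem: Eob(W)} applied to $G\in[\C,\W]$ (so that $GD\in\W$ for all $D$) one gets $U\star G\in\W$ for every $U\in\W$; taking $U=FC$, which lies in $\W$ since $F\in[\C,\W]$, gives $(F\oE G)C=FC\star G\in\W$, and functoriality in $C$ yields $F\oE G\in[\C,\W]=\Eob{\W}$. The unit $Y$ lies in $\Eob{\W}$ by (sc-1). Because $\W$ is full and replete in $[\C^\op,\V]$, the subcategory $[\C,\W]$ is full and replete in $[\C,[\C^\op,\V]]$, so the coherence isomorphisms of $[\C\oV\C^\op,\V]$ restrict and make $\Eob{\W}$ a monoidal subcategory. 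The same self-cocompleteness (sc-2) gives $U\star F\in\W$ for all $U\in\W$, $F\in\Eob{\W}$, so the action does not leave $\W$.

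Finally I would produce the actegory constraints. The right unitor $U\star Y\cong U$ is the co-Yoneda (density) isomorphism, and the associator is Fubini for coends combined with cocontinuity of $\oV$:
\begin{align*}
(U\star F)\star G
&=\int^{D}\Bigl(\int^{C}UC\oV(FC)(D)\Bigr)\oV GD\\
&\cong\int^{C}UC\oV\Bigl(\int^{D}(FC)(D)\oV GD\Bigr)
=U\star(F\oE G)\,.
\end{align*}
The pentagon and triangle axioms hold because these isomorphisms are the restrictions of the coherence data of the profunctor bicategory. The step to watch is precisely this restriction: one must check that the ambient associator and unitor descend to $\W$ and $\Eob{\W}$, which holds because, by Remark \ref{rmk: sc}, the inclusion $\W\subseteq[\C^\op,\V]$ preserves the $\W$-weighted colimits appearing above, so the colimits computed in $[\C^\op,\V]$ already live in, and are reflected by, $\W$.
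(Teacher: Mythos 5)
Your proposal is correct and follows essentially the same route as the paper: both identify $\Eob{\W}=[\C,\W]$ with the bimodules whose right modules lie in $\W$, invoke Lemma \ref{lem: Eob(W)} (with $U=FC$) to get closure of the pointwise composition product $(F\ot G)(B,D)=\int^C F(B,C)\oV G(C,D)$ and of the action $U\star F$, and take the unit to be the restricted Yoneda bimodule. The paper leaves the coherence data implicit ("it is clear"), whereas you spell out the co-Yoneda unitor, the Fubini/cocontinuity associator, and the role of Remark \ref{rmk: sc} in ensuring the ambient isomorphisms restrict to $\W$ — a useful elaboration, not a different method.
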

\begin{proof}
$\Eob{\W}$ consists precisely of the functors $F:C\mapsto \{D\mapsto F(C,D)\in\V\}$ satisfying condition (ii) of Lemma \ref{lem: Eob(W)}. By condition (i) of the same Lemma 
it is clear that $\Eob{\W}$ is monoidal with monoidal product
\[
(F\ot G )(B,D)=\int^CF(B,C)\oV G(C,D)
\]
and with monoidal unit $Y_\W(C,D)=\C(D,C)$.
\end{proof}

\begin{exa}
For $\V$-functors $U:\C^\op\to\V$ and $X:\D\to\V$ with $\C$ and $\D$ small the colimits $U\star\under$ weighted by $U$ exist both in $\V$ and $[\D,\V]$ and the limits 
$\{X,\under\}$ weighted by $X$ exist both in $\V$ and $[\C^\op,\V]$. If for all $\V$-functors $G:\D\oV\C\to\V$ there is an isomorphism
\[
U\star\{X,G\}\ \cong\ \{X,U\star G\}
\]
then one says that $U$-weighted colimits commute with $X$-weighted limits in $\V$. This is equivalent to saying that the functor $U\star\under:[\D,\V]\to\V$ preserves $X$-weighted
limits and also to that the functor $\{X,\under\}:[\C^\op,\V]\to\V$ preserves the $U$-weighted colimits.

For a fixed $X$ let $\W_X\subseteq[\C^\op,\V]$ be the full subcategory of weights $U$ such that $U\star\under$ preserves $X$-weighted limits.  
Then it is evident that $\W_X$ is a self-cocomplete replete full subcategory of $[\C^\op,\V]$.
\end{exa}

\begin{exa} \label{exa: flat}
The category $\Flat(\C^\op,\V)$ consists of functors $F$ such that the $F$-weighted colimits commute with all finite limits.
Just like in the previous example it is easy to see that $\Flat(\C^\op,\V)\subseteq[\C^\op,\V]$ is a self-cocomplete replete full subcategory.
Similar can be said about the category $\alpha$-$\Flat(\C^\op,\V)$ of $\alpha$-flat functors (a.k.a. $\alpha$-filtered weights) where $\alpha$ is a regular cardinal. For the precise
definitions of these notions we refer to \cite{Borceux-Quinteiro-Rosicky} which reveals also that in order for these examples to work we have to assume that $\V$ is a locally $\alpha_0$-presentable base and $\alpha\geq \alpha_0$. Many of the usual base categories are in fact locally finitely presentable bases.
\end{exa}

\begin{rmk}
$\V$-categories equivalent to self-cocomplete subcategories of presheaves have a certain resemblance to accessible categories \cite{Adamek-Rosicky,Borceux-Quinteiro-Rosicky}
after a little reformulation. 
Let $\C\into\M$ be a full small subcategory. Let $\W'\subseteq[\C^\op,\V]$ be any full replete subcategory such that
\begin{itemize}
\item $\M$ has $\W'$-weighted colimits,
\item each $C\in \C$ is "$\W'$-presentable", i.e., $\M(C,\under):\M\to\V$ preserves $\W'$-weighted colimits,
\item every $M\in\M$ is the $\W'$-weighted colimit of the inclusion functor $\C\into\M$.
\end{itemize}
Then there is a full replete self-cocomplete $\W\subseteq[\C^\op,\V]$ (in fact $\W\subseteq\W'$) such that $\M\simeq\W$ as extensions of $\C$.  
\end{rmk}

In the next two examples the small category $\C$ is a one-object $\V$-category, i.e., a monoid $E$ in $\V$.
\begin{exa} \label{exa: fgp}
Assume $\V$ is preadditive, i.e., an $\Ab$-category. 
Let $\W_{\fgp}\subseteq\V_E$ be the full subcategory of finitely generated projective right $E$-modules. Then the right regular $E_E$ belongs to $\W_\fgp$. Let $\phi:\,_E\V_E\to\V_E$ 
denote  the forgetful functor. If $U\in\W_\fgp$ and $V\in\,_E\V_E$ is such that $\phi V\in\W_\fgp$ then there exist dual bases, i.e., finite diagrams
\[
U\rarr{f_i}E_E\rarr{u_i}U\quad\text{such that}\quad\sum_{i=1}^nu_i\ci f_i=U
\]
and
\[
\phi V\rarr{g_j}E_E\rarr{v_j}\phi V\quad\text{such that}\quad\sum_{j=1}^mv_j\ci g_j=\phi V\,.
\]
Therefore
\[
U\oE V\longrarr{f_i\oE V}E\oE V\iso\phi V\rarr{g_j}E_E\rarr{v_j}\phi V\iso E\oE V\longrarr{u_i\oE V}U\oE V
\]
is also a dual basis proving that $\W_\fgp$ is self-cocomplete.
\end{exa}

\begin{exa} \label{exa: rank 1}
Let $\W_1\subseteq\V_E$ be the full subcategory of rank 1 free right $E$-modules. If $U\in\W_1$ and $V\in\,_E\V_E$ is such that $\phi V\in\W_1$ then
$U\oE V\cong E\oE V\cong\phi V\cong E_E$. Therefore $\W_1$ is self-cocomplete. 
\end{exa}

For $\V=\Ab$ and $E$ a ring, many constructions of ring theory lead also to self-cocomplete subcategories of $\V_E$ such as finitely generated modules,
projective modules and finitely presented modules. 

\begin{exa}
This is an example of a $\W\subseteq\Ab_E$ which is not self-cocomplete. Let $E$ be a field and let $\W$ be the full subcategory of $E$-vector spaces of dimension less than $n$ where $n>2$. Let $E^2$ be the 2-dimensional vector space and define the functor $F:E\to\V_E$, i.e., the bimodule $F$, as $E^2$ with left $E$-action equal to the right $E$-action.
Then the $W$-weighted colimit $W\star F$ is just the tensor product $W\ot E^2$ of vector spaces and has dimension $2\dim W$. Hence $\W$ satisfies (sc-1) but not (sc-2).
\end{exa}

Now let $\M$ be an arbitrary $\V$-category and $\C$ be a small $\V$-category. For a $\V$-functor $J:\C\to\M$ we define $J_*:\M\to[\C^\op,\V]$ as the $\V$-functor sending $M$ to the
presheaf $\M(J\under,M)$. Let $\W\subseteq[\C^\op,\V]$ denote the full replete subcategory of objects $J_*M$, $M\in\M$. 
Assume that
\begin{description}
\item[(wb-0)] the $\W$-weighted colimits of any functor $F:\C\to\M$ exist in $\M$.
\end{description}
Then we can introduce (the enriched version of) the skew monoidal structure that Altenkirch, Chapman and Uustalu construct \cite{Altenkirch-Chapman-Uustalu} 
on the functor category $[\C,\M]$. (For the precise connection we note that the left Kan extension of $G$ along $J$ becomes $\Lan_JG=G^*J'_*$ in our notation.)

The skew monoidal product of $F$ and $G\in[\C,\M]$ is a (chosen) $\W$-weighted pointwise colimit $(F\ot G)C := J_*(FC)\star G$, i.e., it is the composite functor
\[
F\ot G\ :=\ \left(\C\rarr{F}
\M\rarr{J'_*}\W\rarr{G^*}\M\right)
\]
where we introduced $G^*:=\under\star G$, the weighted colimit of $G$ as a functor of the weight, and $J'_*$ denotes $J_*$ with codomain restricted to $\W$.
Notice that then $J^*$ is left adjoint to $J'_*$. Still, when no confusion arises, we write $J_*$ in place of $J'_*$.

The monoidal unit is the object $J\in[\C,\M]$.

The skew monoidal comparison morphisms are 
\begin{align*}
\eps_F&:F\ot J\to F, \quad (\eps_F)_C=\ev_{FC}:J^*J_*FC\to FC\\
\eta_G&:G\to J\ot G,\quad (\eta_G)_C=\left(GC\rarr{\sim}G^*YC\rarr{G^*J}G^*J_*JC\right)\\
\gamma_{F,G,H}&:F\ot(G\ot H)\to(F\ot G)\ot H,\\
&=\left((H^*J_*G)^*J_*F\rarr{\sim}H^*(J_*G)^*J_*F\longrarr{H^*J_{J_*F,G}}H^*J_*G^*J_*F\right)
\end{align*}
where $\ev_M:\M(J\under,M)\star J\to M$ is the counit of the adjunction $J^*\dashv J'_*$, the isomorphism in $\gamma_{F,G,H}$ comes from interchangeability of colimits and 
$J_{U,G}:U\star J_*G\to J_*(U\star G)$ is the $\V$-natural transformation
\begin{align*}
I&\longrarr{i_{U\star G}}\M(U\star G,U\star G)\iso[\C^\op,\V](U,\M(G\under,U\star G))\to\\
&\longrarr{(J_*)\under}[\C^\op,\V](U,[\C^\op,\V](J_*G\under,J_*(U\star G)))\iso[\C^\op,\V](U\star J_*G,J_*(U\star G))
\end{align*}
Invertibility of $J_{U,G}$ means precisely that $J_*$ preserves the $U$-weighted colimit of $G$.

The authors of  \cite{Altenkirch-Chapman-Uustalu} call $J:\C\to\M$ \textsl{well-behaved} if, beyond the condition (wb-0) above, the following three conditions hold.
\begin{description}
\item[(wb-1)] $J:\C\to\M$ is fully faithful.
\item[(wb-2)] $J_*:\M\to[\C^\op,\V]$ is fully faithful.
\item[(wb-3)] $J_*$ preserves the $\W$-weighted colimits of all $G:\C\to \M$.
\end{description}
It is then proved in \cite[Theorem 4.4]{Altenkirch-Chapman-Uustalu} that if $J$ is well-behaved then the skew monoidal category $\bra[\C,\M],J,\ot,\gamma,\eta,\eps\ket$ defined above is a monoidal category. In the above presentation this is rather clear: (wb-1) implies that $\eta$ is invertible, (wb-2) implies that $\eps$ is invertible and (wb-3) implies that $\gamma$ is invertible.

\begin{thm} \label{thm: wb-sc}
Let $\M$ be a $\V$-category and let $\C$ be a small $\V$-category. Then there exists a well-behaved $J:\C\to\M$ precisely when $\M$ is equivalent to a self-cocomplete full replete subcategory $\W\subseteq[\C^\op,\V]$.
In this case the monoidal category $\bra[\C,\M],J,\ot,\gamma,\eta,\eps\ket$ is equivalent to the monoidal category $\Eob{\W}$ of those bimodules $F:\C\oV\C^\op\to\V$ which
as right $\C$-modules belong to $\W$.
\end{thm}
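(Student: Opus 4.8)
The plan is to prove the two assertions in turn: first the biconditional characterising the existence of a well-behaved $J$, then the monoidal equivalence, the latter resting on the former. For the forward direction, suppose $J\colon\C\to\M$ is well-behaved. By (wb-2) the functor $J_*\colon\M\to[\C^\op,\V]$ is fully faithful, so it corestricts to an equivalence between $\M$ and its essential image, which is by definition $\W$; it remains to check this $\W$ is self-cocomplete. Condition (sc-1) follows from (wb-1), since full faithfulness of $J$ gives $J_*(JC)=\M(J\under,JC)\cong\C(\under,C)=YC$, so every representable lies in $\W$. For (sc-2) take $U\in\W$ and $F\colon\C\to\W$. As $J_*$ is fully faithful with essential image $\W$, a quasi-inverse lets me lift $F$ to some $G\colon\C\to\M$ with $J_*G\cong F$. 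By (wb-0) the colimit $U\star G$ exists in $\M$, and by (wb-3) it is preserved by $J_*$, so $U\star F\cong U\star J_*G\cong J_*(U\star G)\in\W$. Thus $\W$ is self-cocomplete.

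For the converse, assume $\M\simeq\W$ with $\W\subseteq[\C^\op,\V]$ full, replete and self-cocomplete. Since (wb-0)--(wb-3) are invariant under replacing $\M$ by an equivalent $\V$-category, I may take $\M=\W$ and set $J:=Y_\W\colon\C\to\W$, the corestriction of the Yoneda embedding supplied by (sc-1). Then (wb-1) is full faithfulness of Yoneda restricted to the full subcategory $\W$, and (wb-0) is exactly the content of (sc-2) read through Remark \ref{rmk: sc}. For (wb-2) and (wb-3) the key point is that $J_*$ is naturally isomorphic to the inclusion $\W\into[\C^\op,\V]$: for $W\in\W$ the enriched Yoneda lemma gives $(J_*W)(C)=\W(\C(\under,C),W)=[\C^\op,\V](YC,W)\cong WC$. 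Hence $J_*$ is fully faithful (wb-2), its essential image is again $\W$, and it preserves $\W$-weighted colimits by the last part of Remark \ref{rmk: sc} (wb-3). This proves the biconditional.

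For the monoidal statement I assume $J$ well-behaved and let $\Phi\colon[\C,\M]\to[\C,\W]=\Eob{\W}$ be post-composition with the corestriction $J'_*\colon\M\iso\W$; by the first part this is an equivalence of categories. Writing $\widetilde F:=J_*F$, so that as a bimodule $\widetilde F(C,D)=\M(JD,FC)$, I must promote $\Phi$ to a monoidal equivalence onto $\Eob{\W}$. On units this is immediate: $\widetilde J(C,D)=\M(JD,JC)\cong\C(D,C)=Y_\W(C,D)$ by (wb-1), matching the unit of $\Eob{\W}$. On tensor products the Altenkirch--Chapman--Uustalu product gives $(F\ot G)C=J_*(FC)\star G$; applying $J_*$ and invoking (wb-3) to pass the presheaf-valued hom through the weighted colimit yields, pointwise,
\begin{align*}
\widetilde{F\ot G}(C,D)
&\cong\bigl(J_*(FC)\star J_*G\bigr)(D)\\
&=\int^{C'}\M(JC',FC)\oV\M(JD,GC')
=\int^{C'}\widetilde F(C,C')\oV\widetilde G(C',D),
\end{align*}
which is precisely $(\widetilde F\ot\widetilde G)(C,D)$, the bimodule product of Corollary \ref{cor: Eob(W)}. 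Thus $\Phi$ carries $\ot$ and the unit of the ACU structure to those of $\Eob{\W}$.

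It remains to verify that the coherence data are transported correctly, and this is where the real work lies. By (wb-1), (wb-2), (wb-3) the maps $\eta$, $\eps$, $\gamma$ of the ACU structure are already isomorphisms, so $[\C,\M]$ is genuinely monoidal, as in \cite[Theorem 4.4]{Altenkirch-Chapman-Uustalu}; I would then check that under $\Phi$ the associator $\gamma_{F,G,H}$---built from the interchange-of-colimits isomorphism and from $J_{U,G}$---goes to the associator of $\Eob{\W}$, namely the Fubini isomorphism for the iterated coend $\int^{C',C''}\widetilde F(C,C')\oV\widetilde G(C',C'')\oV\widetilde H(C'',D)$, while the translations of $\eta$ and $\eps$ into the unitors of $\Eob{\W}$ are forced by the Yoneda reduction of a coend against a representable. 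The expected obstacle is bookkeeping rather than conceptual: matching the canonical comparison $J_{U,G}$, whose invertibility is (wb-3), with the coend-associativity isomorphism, since both encode the single statement that $J_*$ commutes with the relevant weighted colimits. Once these identifications are in place, $\Phi$ is a strong monoidal equivalence, completing the proof.
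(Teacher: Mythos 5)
Your proof is correct and follows essentially the same route as the paper's: the forward direction extracts $\W$ as the replete image of $J_*$ and checks (sc-1), (sc-2) from (wb-1), (wb-0), (wb-3); the converse builds $J$ from the Yoneda embedding $Y_\W$ (your WLOG reduction $\M=\W$ is just a repackaging of the paper's explicit use of the adjoint equivalence $\Psi^*\dashv\Psi$, with the same Yoneda identification $J_*\cong{}$inclusion); and the monoidal equivalence is the same postcomposition functor $F\mapsto J'_*F$ with the same unit isomorphism $J'_*J\cong Y_\W$ and the same coend computation via (wb-3). Your closing remark about transporting the coherence data is work the paper also leaves implicit, so nothing is missing relative to its standard of rigor.
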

\begin{proof}
If a well-behaved $J:\C\to\M$ exists then the fully faithful $J_*$, with codomain restricted to its replete image $\W$, defines an equivalence $\M\simeq\W$. By (wb-1) $Y\cong J_*J$ therefore
$\W$ satisfies axiom (sc-1) of self-cocompleteness. Since $J_*$ preserves the $\W$-weighted colimits by (wb-3), all of which exists in $\M$ by (wb-0), the $\W$ satisfies (sc-2). Therefore
$\W$ is self-cocomplete.

Assume that $\Psi:\M\iso\W$ is an equivalence with a self-cocomplete full replete $\W\subseteq[\C^\op,\V]$. Then $\Psi$ is part of an adjoint equivalence $\Psi^*\dashv\Psi$.
Define $J:=\Psi^*Y_\W:\C\to\M$ which is fully faithful obviously, so (wb-1) holds. Then the associated $J_*=\M(J,\under)$ satisfies
\begin{align*}
J_*M&=\M(\Psi^*Y_\W\under,M)\cong\W(Y_\W\under,\Psi M)=\\
&=[\C^\op,\V](Y_,\Psi M)\cong\Psi M
\end{align*}
by Yoneda Lemma therefore $J_*$ has replete image precisely $\W$. Moreover the corestriction $J'_*:\M\to\W$ is isomorphic to $\Psi$ implying that $J_*$ is fully faithful, so (wb-2) holds 
for $J$. The colimit of $F:\C\to\M$ weighted by $U\in\W$ can now be constructed from its existence in $\W$,
\[
U\star F:=\Psi^*(U\star \Psi F)\,.
\]
Indeed,
\begin{align*}
\M(U\star F,M)&\cong\W(U\star\Psi F,\Psi M)\cong [\C^\op,\V](U,\W((\Psi F\under,\Psi M))\cong\\
&\cong[\C^\op,\V](U,\M(F\under,M))\,.
\end{align*}
This proves (wb-0). It remains to show (wb-3). First notice that the inclusion $\W\subseteq[\C^\op,\V]$ both preserves and reflects $\W$-weighted colimits.
Therefore $J_*$ preserves the $\W$-weighted colimits iff $J'_*$ does. But $J'_*$ is isomorphic to the equivalence $\Psi:\M\iso\W$ so it preserves every colimit that exist in $\M$.
This proves (wb-3) hence also the first statement. As for the second, let $J:\C\to\M$ be well-behaved and let $\W$ be the self-cocomplete replete image of $J_*$.
Then composing functors with $J_*$ defines a functor 
\[
[\C,\M]\to [\C,\W]\equiv\Eob{\W},\quad F\mapsto J'_*F
\]
which is an equivalence since $J'_*$ is. But it is also a strong monoidal functor because $J'_*J\cong Y_\W$ and 
\begin{equation*}
J_*F\ot J_*G=\int^C\M(JC,F\under)\oV\M(J\under,GC)\cong\M(J\under,\int^C\M(JC,F\under)\oV GC)=J_*(F\ot G)
\end{equation*}
due to property (wb-3).
\end{proof}

The above Theorem is worth a comparison with the general theory of free cocompletions \cite{Kelly,Albert-Kelly} which offers an elegant explanation for the monoidal structure on $[\C,\M]$. Let $\Phi=\{\phi_i:\J_i\to\V\,|\,i\in\I\}$ be a family of small weights. Then for a small category
$\C$ there is the closure $\Phi(\C)\subseteq[\C^\op,\V]$ of $\C$ under $\Phi$-colimits. Let $\Phi$-$\mathsf{Cocts}$ denote the 2-category of 
$\Phi$-cocomplete $\V$-categories, $\Phi$-cocontinuous functors and all natural transformations between them. Then composition with the
inclusion functor $J:\C\to\Phi(\C)$ defines an equivalence \cite[Prop. 4.1.]{Albert-Kelly}
\[
\Phi\text{-}\mathsf{Cocts}(\Phi(\C),\N)\ \iso\ [\C,\N]
\]
for any $\Phi$-cocomplete $\N$. Inserting $\N=\Phi(\C)$ and using the fact that the endohom category of any 2-category is a monoidal category
the above equivalence induces a monoidal structure on $[\C,\Phi(\C)]$. By \cite[Prop. 4.1.]{Albert-Kelly} or \cite[Theorem 5.35]{Kelly} we know
that the inverse equivalence is given by left Kan extension along $J$. Therefore it is not difficult to see that the induced monoidal structure of 
$[\C,\Phi(\C)]$ is precisely the skew monoidal structure of \cite{Altenkirch-Chapman-Uustalu} with a well-behaved $J$.

Since $\Lan_JF$ of a functor $F\in[\C,\Phi(\C)]$ is given pointwise by a colimit 
\[
\Lan_JF(M)\ =\ J_*M\star F,\qquad M\in\Phi(\C)
\]
the weight $J_*M$ of which belongs to $\Phi(\C)$, the very existence of $\Lan_JF$ shows that $\Phi(\C)$ is self-cocomplete. Vice versa, if $\W\subseteq[\C^\op,\V]$ is 
self-cocomplete then, obviously, $\W$ is the $\Phi$-cocompletion of $\C$ with $\Phi=\W$. Thus being self-cocomplete is a property
that detects whether a full replete subcategory of $[\C^\op,\V]$ is the $\Phi$-cocompletion of $\C$ w.r.t. some family $\Phi$ of weights.

Applying the above notions and results to our problem of monoidality of the module category of a skew monoidal tensored $\V$-category we 
meet only very special self-cocomplete subcategories $\W$ of $[E^\op,\V]=\V_E$, namely reflective ones. But even for a cocomplete $\W$
the condition of self-cocompleteness is non-trivial since the inclusion $\W\into\V_E$ need not preserve all (small) colimits.

\begin{pro} \label{pro: main}
Let $\M$ be a tensored $\V$-category with reflexive coequalizers and with a dense generator $R$.
Let $E=\M(R,R)$ and let the category $\V_E$ be endowed with the skew monoidal product $V\oV' W:=\phi'V\oV W$, where $\phi':\,_E\V_E\to\V_E$
is the forgetful functor.
\begin{enumerate}
\item $\M$ is a reflective subcategory of the presheaf category $\V_E$, hence complete and cocomplete.
\item The inclusion functor $J:\M\into\V_E$, $M\mapsto \M(\Eob{R},M)$, exhibits $\bra\M,\bullet,R\ket$ as the skew monoidal reflection of
$\bra\V_E,\oV',E'\ket$ in the sense of \cite[Theorem 2.1]{Lack-Street: lift}.
\item The lift of $J:\M\into\V_E$ to the Eilenberg-Moore categories is a functor $\JJ:\,_E\M\to\,_E\V_E$ which exhibits $\bra\,_E\M,\uot,\Eob{R}\ket$
as the skew monoidal reflection of the ordinary monoidal $\V$-category $\bra\,_E\V_E,\oE,E\ket$.
\item If $\JJ:\,_E\M\to\,_E\V_E$ is strong monoidal then $\bra\,_E\M,\uot,\Eob{R}\ket$ is a closed monoidal reflective subcategory of the bimodule category $\bra\,_E\V_E,\oE,E\ket$ and $\JJ$ is strong closed.
\item If the replete image of $J:\M\into\V_E$ is a self-cocomplete subcategory of $\V_E$ then the functor $\JJ:\,_E\M\to\,_E\V_E$ is strong monoidal.
\end{enumerate}
\end{pro}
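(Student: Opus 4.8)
The plan is to take the five parts in turn, doing the genuine work only in (v). For (i) I would note that $J=\M(\Eob{R},\under):\M\to\V_E$ is fully faithful exactly because $R$ is a dense generator, and that it carries the left adjoint $J^*=\under\oE\Eob{R}$ recorded in Section~\ref{sec: Eob}; hence $\M$ is a reflective subcategory of the presheaf category $\V_E$. As $\V$ is complete and cocomplete, so is $\V_E$, and a reflective subcategory of a (co)complete category is again (co)complete, proving (i). For (ii) I would invoke the skew-monoidal reflection theorem \cite[Theorem 2.1]{Lack-Street: lift}: the reflection $J^*\dashv J$ transports the skew monoidal product $\oV'$ on $\V_E$ to one on $\M$, and, comparing with the formulae of Proposition~\ref{pro: dot}, this reflected structure is precisely $\bra\M,\bullet,R\ket$; the comparison of coherence morphisms is routine.

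For (iii) I would observe that the canonical monad of $\bra\V_E,\oV',E\ket$ is $E\oV\under$, with Eilenberg--Moore category $\,_E\V_E$, while the canonical monad of $\bra\M,\bullet,R\ket$ is $\dot T=E\oV\under$, with Eilenberg--Moore category $\,_E\M$; lifting the reflection along the two forgetful functors, which create the relevant reflexive coequalizers, yields $\JJ:\,_E\M\to\,_E\V_E$ as the skew-monoidal reflection of $\bra\,_E\V_E,\oE,E\ket$. For (iv), if $\JJ$ is moreover strong monoidal, then the standard transfer of closed structure along a strong-monoidal reflection onto a full reflective subcategory of a closed monoidal category endows $\bra\,_E\M,\uot,\Eob{R}\ket$ with a closed monoidal structure and makes $\JJ$ strong closed, using that $\,_E\V_E$ is closed and $\JJ$ is fully faithful with a left adjoint.

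The heart is (v), and here I would reduce everything to condition \textbf{(wb-3)}. Applying Theorem~\ref{thm: wb-sc} with $\C=E$, the functor $\M(R,\under)$ of the statement is the $J_*$ of Section~\ref{monoidality}, associated to the evident $\V$-functor $E\to\M$ selecting $R$, and its replete image is the given $\W$; by that Theorem, self-cocompleteness of $\W$ is equivalent to this $E\to\M$ being well-behaved. I would check directly that it is: \textbf{(wb-1)} holds since $E\to\M$ is fully faithful by construction, \textbf{(wb-2)} is the dense-generator hypothesis, and \textbf{(wb-0)} holds because $\M$ is cocomplete by part (i). The one substantial clause, \textbf{(wb-3)} — that $\M(R,\under)$ preserves $\W$-weighted colimits — is exactly what self-cocompleteness supplies. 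Now $\JJ\Eob{L}\oE\Eob{M}$ is a $\W$-weighted colimit whose weight is $\M(R,L)\in\W$ (with $L$ the underlying object of $\Eob{L}$), and the comparison measuring its preservation by $\M(R,\under)$ is precisely the strength $\JJ_{J\Eob{L},\Eob{M}}$; hence \textbf{(wb-3)} forces $\JJ_{J\Eob{L},\Eob{M}}$ to be invertible for all $\Eob{L},\Eob{M}$ (equivalently, by (\ref{monoidal sheaves}), $\Eob{\coev}_{\Eob{R},J\Eob{L}\oE J\Eob{M}}$ is invertible). Feeding this into Proposition~\ref{pro: uot via J}(iii) makes $\Eob{\gamma}$ invertible, and with parts (i)--(ii) there ($\Eob{\eta}$ always invertible, $\Eob{\eps}$ invertible since $R$ is dense) this shows $\bra\,_E\M,\uot,\Eob{R}\ket$ is genuinely monoidal. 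Finally, the multiplicativity constraint of $\JJ$ is exactly this strength $\JJ_{J\Eob{L},\Eob{M}}$ and its unit constraint is the identity on $\Eob{R}$, so $\JJ$ is strong monoidal.

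The hard part will be the bookkeeping in (v): matching self-cocompleteness of $\W$ with \textbf{(wb-3)} for the \emph{particular} functor $E\to\M$ picking out $R$ (rather than the abstract one produced by Theorem~\ref{thm: wb-sc}), and recognizing the colimit-preservation comparison as literally the strength $\JJ_{J\Eob{L},\Eob{M}}$, so that Proposition~\ref{pro: uot via J}(iii) applies verbatim. Once this identification is secured, everything downstream is a direct reading of the formulae already established for $\uot$.
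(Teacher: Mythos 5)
Your overall route is the same as the paper's: (i)--(iii) via the reflection $J^*\dashv J$ and the Lack--Street skew-monoidal reflection theorem, (iv) via Day-style transfer of closed structure along the reflection, and (v) by showing that self-cocompleteness forces $\M(R,\under)$ to preserve the relevant $\W$-weighted colimits, so that the strengths $\JJ_{J\Eob{L},\Eob{M}}$ and hence, by (\ref{monoidal sheaves}), the arrows $\Eob{\coev}_{\Eob{R},J\Eob{L}\oE J\Eob{M}}$ are invertible. Your detour in (v) through Theorem \ref{thm: wb-sc} and well-behavedness is a little longer than the paper's direct appeal to colimit preservation (the paper simply notes that $J$ preserves $JM$-weighted colimits and concludes), and your extra step through Proposition \ref{pro: uot via J}(iii) to get monoidality of $\uot$ is not needed for (v) itself, but both are sound; in particular your observation that the well-behaved functor produced by Theorem \ref{thm: wb-sc} is isomorphic to the functor picking out $R$ is exactly the identification that makes the reduction legitimate.

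The one genuine omission is in (ii) and (iii): \cite[Theorem 2.1]{Lack-Street: lift} is not unconditional, and verifying its hypothesis is the main content of the paper's proof of these two parts. Concretely, for the reflection $J^*\dashv J$ to induce a skew monoidal structure one must check invertibility of the arrows $J^*(JM\oV'\coev_{\Eob{R},V})$ for all $M\in\M$, $V\in\V_E$, and in (iii) of the lifted analogue $\JJ^*(\JJ\Eob{M}\oE\Eob{\coev}_{\Eob{R},\Eob{V}})$. Your phrase ``the reflection transports the skew monoidal product'' silently assumes exactly this. The check is short but not vacuous: since the $E$-action on $\phi'JM\oV V$ sits on the $V$-factor and $\phi'JM\oV\under$ preserves the coequalizer defining $\oE$, the arrow in question is isomorphic to $\phi'JM\oV(\coev_{\Eob{R},V}\oE\Eob{R})$, and $\coev_{\Eob{R},V}\oE\Eob{R}=J^*\coev_{\Eob{R},V}$ is invertible by the triangle identity together with invertibility of the counit of $J^*\dashv J$, the latter being equivalent to full faithfulness of $J$, i.e.\ to density of $R$. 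A smaller point in (v): the multiplicativity constraint of $\JJ$ produced by the reflection is $\Eob{\coev}_{\Eob{R},J\Eob{M}\oE J\Eob{N}}$, not literally the strength $\JJ_{J\Eob{M},\Eob{N}}$; since you invoke (\ref{monoidal sheaves}) to pass between the two invertibilities, your conclusion stands, but the assertion ``the constraint is exactly the strength'' should either be proved or replaced by that equivalence.
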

\begin{proof}
(i) Existence of tensors and reflexive coequalizers in $\M$ imply the existence of the colimits $V\oE\Eob{R}$ for all $V\in\V_E$ and this defines
a left adjoint $J^*$ of $J$.

(ii) The condition in \cite[Theorem 2.1]{Lack-Street: lift} for $J^*\dashv J$ to be a (right) skew monoidal reflection is invertibility of the arrows
$J^*(JM\oV'\coev_{\Eob{R},V})$ for $M\in\M$, $V\in\V_E$, which holds automatically due to invertibility of $\coev_{\Eob{R},V}\oE\Eob{R}$.
The skew monoidal product induced by the reflection is isomorphic to $\bullet$ since 
\[
M\bullet N=HM\oV N\cong HM\oV(JN\oE\Eob{R})\cong J^*(\phi'JM\oV JN)=J^*(JM\oV' JN)\,.
\]

(iii) It is easy to verify that the Eilenberg-Moore category of the canonical monad for $\bra\V_E,\oV',E'\ket$ is precisely $\bra\,_E\V_E,\oE,E\ket$.
The lift of $J:\M\into\V_E$ is the functor $\JJ:\,_E\M\into\,_E\V_E$ of composition with $J$  
and it has a left adjoint $\JJ^*\Eob{V}=\Eob{V}\oE\Eob{R}$. The condition for $\bra\,_E\M,\uot,\Eob{R}\ket$ to arise from $\bra\,_E\V_E,\oE,E\ket$
as a skew reflection is invertibility of $\JJ^*(\JJ\Eob{M}\oE\Eob{\coev}_{\Eob{R},\Eob{V}})$ which holds automatically as in (ii). Then the 
isomorphism $\Eob{M}\uot\Eob{N}\cong \JJ^*(\JJ\Eob{M}\oE\JJ\Eob{N})$ can be verified easily.

(iv) Conservativity of the fully faithful $\JJ$ implies that $\bra\,_E\M,\uot,\Eob{R}\ket$ is a monoidal category. 
Since $\bra\,_E\V_E,\oE,E\ket$ is closed, we are in the situation of a closed reflection \cite{Day} and the adjunction $\JJ^*\dashv\JJ$ induces 
left and right internal homs on $\bra\,_E\M,\uot,\Eob{R}\ket$ and $\JJ$ preserves them.

(v) The skew monoidal structure of $\JJ$ defined by the reflection consists of the identity $E=\JJ\Eob{R}$ and of the arrows 
$\JJ_{M,N}:=\Eob{\coev}_{\Eob{R},{J}\Eob{M}\oE {J}\Eob{N}}$. By assumption the $J$ preserves the $JM$-weighted colimits and this implies in particular that the strengths $\JJ_{J\Eob{M},\Eob{N}}$ are invertible. Using (\ref{monoidal sheaves}) this means that $\JJ$ is strong monoidal.
\end{proof}

In the Theorem below we say that an object $R\in\M$ \textsl{well-generates} $\M$ if the corresponding $\V$-functor $E\to\M$ is well-behaved.
\begin{thm} \label{thm: main}
Let $\bra\M,\smp,R\ket$ be an r-exact skew monoidal tensored $\V$-category  in which $\M$ is
well-generated by $R$. Let $E$ be the endomorphism monoid of $R$. Then 
\begin{enumerate}
\item the $\V$-category $\M$ is complete and cocomplete and is equivalent to a self-cocomplete full replete and reflective subcategory
$\W\subseteq\V_E$,
\item $\bra\,_E\M,\uot,\Eob{R}\ket$ is a closed monoidal $\V$-category monoidally equivalent to the subcategory $\Eob{\W}\subseteq\,_E\V_E$ of those bimodules which, as right $E$-modules, belong to $\W$.
\item the skew monoidal category $\bra\M^T,\hot,\alg{R}\ket$ of modules over $\M$ is a monoidal category
and the forgetful functor $\G:\M^T\to\,_E\M$ is strong monoidal and monadic,
\item the monoidal category $\bra\M^T,\hot,\alg{R}\ket$ is equivalent to the Eilenberg-Moore category 
of an opmonoidal monad defined on $\Eob{\W}$.
\end{enumerate}
\end{thm}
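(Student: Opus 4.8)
The plan is to assemble the conditional results proved above, the hypothesis of well-generation serving precisely to discharge their antecedents. Recall that $R$ well-generating $\M$ means that the one-object $\V$-functor $j\colon E\to\M$ picking out $R$ is well-behaved; in particular condition (wb-2) says that $j_*=\M(\Eob R,\under)=J$ is fully faithful, i.e.\ $R$ is a dense generator. For (i) I would apply Theorem \ref{thm: wb-sc} with $\C=E$: the fully faithful $J$ with replete image $\W\subseteq\V_E$ yields an equivalence $\M\simeq\W$ with $\W$ self-cocomplete, while Proposition \ref{pro: main}(i) gives that $\M$ is reflective in $\V_E$ and hence complete and cocomplete, repleteness and reflectivity of $\W$ coming from the left adjoint $J^*$ of $J$.

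For (ii) I would first recall, as in the Remark following Proposition \ref{pro: uot via J}, that under the identification $\,_E\M\equiv[E,\M]$ the product $\uot$ is exactly the Altenkirch--Chapman--Uustalu product attached to $j$. Since $j$ is well-behaved, the second assertion of Theorem \ref{thm: wb-sc} makes $\bra\,_E\M,\uot,\Eob{R}\ket$ a monoidal category monoidally equivalent to the bimodule subcategory $\Eob\W$ of Corollary \ref{cor: Eob(W)}. Feeding the self-cocompleteness of $\W$ from (i) into Proposition \ref{pro: main}(v) gives that $\JJ$ is strong monoidal, and Proposition \ref{pro: main}(iv) then upgrades $\,_E\M$ to a closed monoidal reflective subcategory of $\bra\,_E\V_E,\oE,E\ket$ with $\JJ$ strong closed.

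For (iii), Theorem \ref{thm: G strong} already supplies that $\Forg\colon\M^T\to\,_E\M$ is strong skew monoidal and monadic, so the only task is to transfer monoidality across $\Forg$. Because $\uot$ is monoidal by (ii), the cells $\Eob\gamma$ and $\Eob\eps$ are invertible; and because $\Forg$ is \emph{strong}, the skew monoidal functor axioms solve for $\Forg\bar\gamma$ and $\Forg\bar\eps$ as composites of $\Eob\gamma$, $\Eob\eps$ with the invertible comparison cells $\Forg_0$ and $\Forg_{\alg M,\alg N}$, so these are invertible too. A monadic functor reflects isomorphisms, whence $\bar\gamma$ and $\bar\eps$ are invertible; $\bar\eta$ is invertible by Theorem \ref{thm: hot}(iv). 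Thus $\bra\M^T,\hot,\alg{R}\ket$ is monoidal, and $\Forg$, being strong skew monoidal between monoidal categories, is strong monoidal and monadic. The hard part here, before invoking conservativity, is to genuinely verify that the associativity hexagon and the unit axioms for the strong functor $\Forg$ can be solved so as to exhibit $\Forg\bar\gamma$ and $\Forg\bar\eps$ as isomorphisms in terms of the invertible data $\Eob\gamma,\Eob\eps,\Forg_0,\Forg_{\alg M,\alg N}$.

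Finally for (iv) I would observe that, $\,_E\M$ being now genuinely monoidal, the skew opmonoidal $\V$-monad $\Eob T=\Forg\F$ of Theorem \ref{thm: G strong} is an ordinary opmonoidal monad. Its Eilenberg--Moore category is monoidal with strong monoidal forgetful functor, and by the monadicity of Theorem \ref{thm: G} it is identified with $\bra\M^T,\hot,\alg{R}\ket$, the induced product agreeing with $\hot$ since the strong monoidal monadic $\Forg$ reconstructs the monoidal structure on $\M^T$ from $\Eob T$. Transporting $\Eob T$ along the monoidal equivalence $\,_E\M\simeq\Eob\W$ of (ii) produces an opmonoidal monad $\Eob T'$ on $\Eob\W$, and the equivalence lifts to a monoidal equivalence $\bra\M^T,\hot,\alg{R}\ket\simeq\Eob\W^{\Eob T'}$. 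All of (iv), and indeed (i), (ii) and the structural half of (iii), are then matters of correctly matching up the constructions already in hand.
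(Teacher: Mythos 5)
Your proposal is correct and follows essentially the same route as the paper: parts (i) and (ii) assemble Theorem \ref{thm: wb-sc}, Proposition \ref{pro: main} and Corollary \ref{cor: Eob(W)} exactly as the paper does, and parts (iii) and (iv) combine Theorem \ref{thm: G strong} (via well-generators being dense generators) with conservativity of the monadic $\Forg$ to transfer monoidality, which is precisely the paper's argument. Your explicit unpacking of how the strong skew monoidal functor axioms express $\Forg\bar\gamma$ and $\Forg\bar\eps$ as composites of invertible cells is just a more detailed rendering of the paper's terse "the skew monoidal functor relations imply" step, not a different method.
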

\begin{proof}
(i) follows from Proposition \ref{pro: main} (i) and Theorem \ref{thm: wb-sc}. 

(ii) By Proposition \ref{pro: main} (v) and (iv) $\bra\,_E\M,\uot,\Eob{R}\ket$ is closed monoidal and using (i) $_E\M=[E,\M]$ is equivalent to the
monoidal category $[E,\W]=\Eob{\W}$ by Corollary \ref{cor: Eob(W)}.

(iii) Using that well-generators are dense generators we obtain from Theorem \ref{thm: G strong} that $\Forg$ is strong skew monoidal and monadic. 
Since the codomain of $\G$ is monoidal and $\G$ reflects isomorphisms, the skew monoidal functor relations imply that the 
domain of $\G$ is also a monoidal category.

(iv) The equivalence of $\M^T$ with $\Eob{\W}^{\Eob{T}}$ for an opmonoidal monad follows also from Theorem \ref{thm: G strong} because $_E\M$ is equivalent to $\Eob{\W}$.
\end{proof}

\appendix

\section{Enriched $E$-objects}  \label{sec: smpq}

In \cite{SMC} we have shown for $\V=\Set$ that any skew monoidal structure $\bra\M,\smp,R\ket$ on $\M$ induces a skew monoidal structure 
$\bra\,_E\M,\smpq,\Eob{R}\ket$ on the category of $E$-objects.
This Appendix serves as a supplement to \cite[Theorem 5.3]{SMC} in which the equivalence of $\M^T$ with $(_E\M)^{T_q}$ is proven merely as a category equivalence. 
Here we show, using the Lifting Theorem, that this is actually a skew monoidal equivalence and this holds for general $\V$. 
Also we need to reformulate some Lemmas of \cite{SMC} in order to adapt them to the enriched setting.

Let $\bra\bra\M,c,i\ket, \bra \oV,a,l\ket, \bra\smp,\Gamma,\Gamma'\ket, R,\gamma,\eta,\eps\ket$ be a skew monoidal tensored $\V$-category.

Let $E:=\bra\M(R,R),c_{R,R,R},i_R\ket$ be the endomorphism monoid of the skew unit object $R$. The algebras for the monad $E\oV\under$ will be called $E$-objects. So, the
$E$-objects are pairs $\bra M,\lambda\ket$ where $M$ is an object of $\M$ and $\lambda:E\oV M\to M$ is an arrow satisfying
\begin{align}
\label{E-ob-1}
\lambda\ci(E\oV\lambda)&=\lambda\ci(c_{R,R,R}\oV M)\ci a_{E,E,M}\\
\label{E-ob-2}
\lambda\ci(i_R\oV M)\ci l_M&=M\,.
\end{align}
A morphism $\bra M,\lambda\ket\to\bra M',\lambda'\ket$ of $E$-objects is an arrow $t:M\to M'$ satisfying $\lambda'\ci(E\oV t)=t\ci\lambda$. 

The category of $E$-objects $_E\M$ can alternatively be defined as the category $[E,\M]$ of $\V$-functors from the 1-object $\V$-category $E$ to $\M$. The equivalence of the two definitions
is provided by the adjunction relations 
\begin{align}
\varphi&=\M(M,\lambda)\ci\coev_{M,E}\\
\lambda&=\ev_{M,M}\ci(\varphi\oV M)
\end{align}
between the $\V$-functor data $\varphi:E\to\M(M,M)$ and the action data $\lambda:E\oV M\to M$.

Using the symmetry $s$ in $\V$ we can define the monoid $E^\op:=\bra\M(R,R),c^\op_{R,R,R},i_R\ket$, with opposite multiplication $c^\op_{R,R,R}=c_{R,R,R}\ci s_{E,E}$,
and $E^\op$-objects as pairs $\bra M,\rho\ket$ where $\rho:E\oV M\to M$ is the same type of arrow as in an $E$-object but satisfies different axioms, namely (\ref{E-ob-1}) and
(\ref{E-ob-2}) with $c$ replaced by $c^\op$. In other words, $E^\op$-objects are $\V$-functors $E^\op\to\M$.

$U$-objects in $\M$ for an arbitrary monoid $\bra U,u_2,u_0\ket$ in $\V$ can be defined similarly. 
If $U$ and $V$ are monoids in $\V$ then $W=U\oV V$ is also a monoid with multiplication $w_2$ which is, up to invertible associators, the composite $(u_2\oV v_2)\ci(U\oV s_{V,U}\oV V)$.
Given a $U$-object $\bra M,\kappa\ket$ and a $V$-object $\bra M,\lambda\ket$ on the same underlying $M$ the necessary and sufficient condition for the existence of a $U\oV V$-object  $(U\oV V)\oV M\to M$ extending both $\kappa$ and $\lambda$ is that "$\kappa$ and $\lambda$ commute" in the sense of obeying 
\[
\kappa\ci(U\oV \lambda)=\lambda\ci(V\oV\kappa)\ci a^{-1}_{V,U,M}\ci(s_{U,V}\oV M)\ci a_{U,V,M}\,.
\]
The following Lemma is the enriched version of \cite[Lemma 4.1]{SMC} in which we use the definitions
\begin{align*}
\mu_{M,N}&:=(\eps_M\smp N)\ci\gamma_{M,R,N}\\
\sigma_N&:=(\ev_{R,R}\smp N)\ci\Gamma_{E,R,N}\ci(E\oV\eta_N)
\end{align*}
and remark that $\sigma: E\oV\under\to R\smp\under$ is a monad morphism \cite{Street: Mnd},
\begin{align}
\label{sigma 1}
\mu_N\ci\sigma_{R\smp N}\ci(E\oV\sigma_N)&=\sigma_N\ci(c_{R,R,R}\oV N)\ci a_{E,E,N}\\
\label{sigma 2}
\sigma_N\ci(i_R\oV N)\ci l_N &=\eta_N\,,
\end{align}
and $\mu$ satisfies
\begin{align}
\label{mu 1}
\mu_{M,N}\ci\mu_{M,R\smp N}&=\mu_{M,N}\ci(M\smp\mu_N)\\
\label{mu_2}
\mu_{M,N}\ci(M\smp\eta_N)&=M\smp N\,.
\end{align}
\begin{lem}
Let $M$ and $N$ be objects in a skew monoidal tensored $\V$-category $\M$. 
\begin{enumerate}
\item Then $\bra M\smp N,\rho\ket$ is an $E^\op$-object where
\[
\rho:=\mu_{M,N}\ci(M\smp\sigma_N)\ci \Gamma'_{E,M,N}\ :\ E\oV(M\smp N)\to M\smp N\,.
\]
\item If $\bra M,\lambda_M\ket$ is an $E$-object then so is $\bra M\smp N, \lambda_1\ket$, where
\[
\lambda_1:=(\lambda_M\smp N)\ci \Gamma_{E,M,N}\ :\ E\oV(M\smp N)\to M\smp N\,,
\]
and $\lambda_1$ and $\rho$ commute.
\item If $\bra N,\lambda_N\ket$ is an $E$-object then so is $\bra M\smp N, \lambda_2\ket$, where
\[
\lambda_2:=(M\smp \lambda_N)\ci \Gamma'_{E,M,N}\ :\ E\oV(M\smp N)\to M\smp N\,,
\]
and $\lambda_2$ and $\rho$ commute.
\item If $\bra M,\lambda_M\ket$ and $\bra N,\lambda_N\ket$ are $E$-objects then the above defined $\lambda_1$ and $\lambda_2$ also commute.
\end{enumerate}
\end{lem}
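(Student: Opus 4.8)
The backbone of the argument is that each partial functor of a $2$-variable actegory morphism is an ordinary actegory morphism, and such morphisms transport $E$-objects to $E$-objects. Concretely, $\under\smp N$ is an actegory endomorphism with strength $\Gamma_{\under,\under,N}$ and $M\smp\under$ one with strength $\Gamma'_{\under,M,\under}$; the strength-coherence conditions \eqref{sma-6}, \eqref{sma-7} and \eqref{sma-8}, \eqref{sma-9} are precisely what is needed for the image of an $E$-action to be an $E$-action. Hence, granting that $\bra M,\lambda_M\ket$ (resp. $\bra N,\lambda_N\ket$) is an $E$-object, $\lambda_1=(\lambda_M\smp N)\ci\Gamma_{E,M,N}$ (resp. $\lambda_2=(M\smp\lambda_N)\ci\Gamma'_{E,M,N}$) satisfies \eqref{E-ob-1} and \eqref{E-ob-2} by direct transcription of the module axioms; this settles the $E$-object claims in (ii) and (iii). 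The action $\rho$ of (i) is of a different nature: it is the restriction, along the monad morphism $\sigma:E\oV\under\to T$, of the $T$-module structure that $\mu_{M,\under}$ places on $M\smp\under$ by virtue of \eqref{mu 1} and \eqref{mu_2}.

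For part (i) I would verify the two $E^\op$-object axioms for $\rho$ directly. The unit law \eqref{E-ob-2} is immediate from the monad-morphism unit \eqref{sigma 2}, the $\mu$-unit \eqref{mu_2}, and the right-unit coherence of $\Gamma'$. The associativity law is the substantive computation: expanding $\rho\ci(E\oV\rho)$ and using naturality, I would first merge the two occurrences of $\Gamma'$ into a single $\Gamma'_{E\oV E,M,N}$ via \eqref{sma-8}, then merge the two occurrences of $\sigma$ by the monad-morphism law \eqref{sigma 1}, and finally collapse the two copies of $R$ by the module associativity \eqref{mu 1}. Care is needed precisely at the emergence of $c^\op$ rather than $c$: because $\rho$ pushes the acting $E$ onto the second leg while $\mu$ combines the resulting copies of $R$ from the outside inwards, the two tensor factors of $E$ are multiplied in the order opposite to that of the monad $E\oV\under$, and reconciling this with the $c$-ordered law \eqref{sigma 1} produces the symmetry, i.e. $c^\op=c_{R,R,R}\ci s_{E,E}$.

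The three commuting statements are then checked against the displayed commutation condition (with $U=V=E$). The cleanest is (iv): since $\lambda_1$ is built from the left strength $\Gamma$ and $\lambda_2$ from the right strength $\Gamma'$, their commutation is an immediate consequence of the two-strength compatibility \eqref{sma-10}, whose symmetry term $s_{U,V}$ supplies exactly the required $s_{E,E}$. For (ii), $\lambda_1$ operates on the first leg and $\rho$ essentially on the second, so I would commute them by interchanging $\Gamma$ past $\Gamma'$ via \eqref{sma-10}, by using bifunctoriality of $\smp$ to pass first-leg morphisms past second-leg ones, and by invoking naturality of $\gamma$ and $\eps$ in the first argument (so that $\lambda_M\smp N$ slides through $\mu_{M,N}$), together with the fact that $\sigma_N$ is purely second-leg.

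The main obstacle is part (iii)---and, to a lesser extent, the associativity in (i)---because there $\lambda_2$ and $\rho$ both act through the second leg, so their commutation is not a mere interchange of strengths. After merging the two $\Gamma'$'s by \eqref{sma-8} one is left with two second-leg $E$-actions on $N$: one given by $\lambda_N$, the other routed through $\sigma$ and then absorbed by the cross-leg map $\mu_{M,N}$. I would separate them using naturality of $\mu_{M,\under}$ and of $\sigma$ with respect to the morphism $\lambda_N:E\oV N\to N$, together with the $E$-object axioms \eqref{E-ob-1}, \eqref{E-ob-2} for $\lambda_N$; the swap placing the two $E$-factors in the correct order again produces $s_{E,E}$. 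As in part (i), the only real difficulty is the disciplined bookkeeping of the associators and of this symmetry, the underlying identities being forced by the coherence data already recorded in \eqref{sma-8}--\eqref{sma-15} and \eqref{sigma 1}--\eqref{mu_2}.
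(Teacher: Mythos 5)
The paper never writes out a proof of this Lemma: it is presented as ``the enriched version of \cite[Lemma 4.1]{SMC}'', with the identities \eqref{sigma 1}--\eqref{mu_2} recorded immediately beforehand precisely as the ingredients of the omitted verification. Your plan is exactly that intended direct verification, and its architecture is sound: transporting an $E$-action along the lax strengths $\Gamma_{\under,\under,N}$ and $\Gamma'_{\under,M,\under}$ (coherence \eqref{sma-6}, \eqref{sma-7}, resp. \eqref{sma-8}, \eqref{sma-9}) settles the $E$-object claims of (ii) and (iii); the unit law in (i) is indeed just \eqref{sigma 2}, \eqref{mu_2} and \eqref{sma-9} plus naturality; (iv) follows from \eqref{sma-10}; and reading $\rho$ as the restriction along $\sigma$ of the right $T$-module structure $\mu_{M,\under}$ is the correct structural picture.

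Two steps of your plan are under-specified, although both are covered by your closing blanket appeal to \eqref{sma-8}--\eqref{sma-15}, so I record them as gaps of explicitness rather than of substance. First, to bring the two $\Gamma'$'s together in (i) and (iii) you must move $\Gamma'_{E,M,N}$ across $E\oV\mu_{M,N}$; since $\mu_{M,N}$ is \emph{not} of the form $M\smp f$, plain naturality of $\Gamma'$ does not do this. What is needed is \eqref{sma-13} (the $\V$-naturality of $\gamma$), which together with naturality at $\eps_M$ gives $\Gamma'_{E,M,N}\ci(E\oV\mu_{M,N})=\mu_{M,E\oV N}\ci(M\smp\Gamma'_{E,R,N})\ci\Gamma'_{E,M,R\smp N}$; the analogous step in (ii) needs \eqref{sma-11} together with \eqref{sma-15}, giving $\Gamma_{E,M,N}\ci(E\oV\mu_{M,N})=\mu_{E\oV M,N}\ci\Gamma_{E,M,R\smp N}$ (note \eqref{sma-11} cannot be ``solved'' for $\Gamma_{E,M\smp R,N}\ci(E\oV\gamma_{M,R,N})$ alone, since $\Gamma$ need not be invertible here). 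Second, after this interchange the two $\sigma$'s sit as $(R\smp\sigma_N)\ci\Gamma'_{E,R,N}\ci(E\oV\sigma_N)$, and \eqref{sigma 1} cannot yet be applied: one first needs $\Gamma'_{E,R,N}\ci(E\oV\sigma_N)=\sigma_{E\oV N}\ci\asso^{-1}_{E,E,N}\ci(s_{E,E}\oV N)\ci\asso_{E,E,N}$, i.e.\ that $\sigma$ is a transformation of actegory morphisms from $E\oV\under$ (with its symmetry strength) to $T$ (with strength $\Gamma'_{\under,R,\under}$); this is proved from \eqref{sma-10} and \eqref{sma-14}, and it --- not the mere reconciliation with \eqref{sigma 1} --- is the actual source of $s_{E,E}$, hence of $c^{\op}$, in (i), and of the swap in (iii). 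With these two identities inserted, your computations close up as described; incidentally, the commutation in (iii) then needs only naturality of $\mu_{M,\under}$ and of $\sigma$ at $\lambda_N$, and the $E$-object axioms \eqref{E-ob-1}, \eqref{E-ob-2} for $\lambda_N$ turn out not to be needed at all.
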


If we have three objects then there are two $\smp$ signs and there are two $E^\op$-actions on $L\smp(M\smp N)$
\begin{align}
\rho_1&:=\rho_{L,M\smp N}\\
\rho_2&:=(L\smp\rho_{M,N})\ci\Gamma'_{E,L,M\smp N}
\end{align}
and two $E^\op$-actions on $(L\smp M)\smp N$
\begin{align}
\rho'_1&:=(\rho_{L,M}\smp N)\ci\Gamma_{E,L\smp M, N}\\
\rho'_2&:=\rho_{L\smp M,N}\,.
\end{align}
If the three objects are $E$-objects then there are three $E$-actions on $L\smp(M\smp N)$
\begin{align}
\lambda_1&:=(\lambda_L\smp(M\smp N))\ci\Gamma_{E,L,M\smp N}\\
\lambda_2&:=(L\smp(\lambda_M\smp N))\ci(L\smp\Gamma_{E,M,N})\ci\Gamma'_{E,L,M\smp N}\\
\lambda_3&:=(L\smp(M\smp\lambda_N))\ci(L\smp\Gamma'_{E,M,N})\ci\Gamma'_{E,L,M\smp N}
\end{align}
and three $E$-actions on $(L\smp M)\smp N$
\begin{align}
\lambda'_1&:=((\lambda_L\smp M)\smp N)\ci(\Gamma_{E,L,M}\smp N)\ci\Gamma_{E,L\smp M,N}\\
\lambda'_2&:=((L\smp\lambda_M)\smp N)\ci(\Gamma'_{E,L,M}\smp N)\ci\Gamma_{E,L\smp M,N}\\
\lambda'_3&:=((L\smp M)\smp\lambda_N)\ci \Gamma'_{E,L\smp M,N}\,.
\end{align}

\begin{lem}
For $E$-objects $\Eob{L}$, $\Eob{M}$ and $\Eob{N}$ the skew monoidal structure maps obey the following identities. 
\begin{align}
\gamma_{L,M,N}\ci\rho_i&=\rho'_i\ci(E\oV\gamma_{L,M,N})\quad(i=1,2)\\
\label{1}
\gamma_{L,M,N}\ci\lambda_i&=\lambda'_i\ci(E\oV\gamma_{L,M,N})\quad(i=1,2,3)\\
\label{eta lambda1 rho}
\rho_{R,M}\ci(E\oV\eta_M)&=\lambda_1\ci(E\oV\eta_M)\\
\label{2}
\lambda_2\ci(E\oV\eta_M)&=\eta_M\ci\lambda_M\\
\eps_M\ci\rho_{M,R}&=\eps\ci\lambda_2\\
\label{3}
\eps_M\ci\lambda_1&=\lambda_M\ci(E\oV\eps_M)
\end{align}
where the unit object $R$ is considered as an $E$-object via $\lambda_R=\ev_{R,R}:E\oV R\to R$.
\end{lem}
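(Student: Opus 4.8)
The plan is to read each of the six identities as the statement that one of the structure transformations $\gamma$, $\eta$, $\eps$ is a morphism of $E$-objects, respectively $E^\op$-objects, for the actions just introduced, and then to reduce each such statement to the $\V$-naturality conditions (\ref{sma-11})--(\ref{sma-15}) together with ordinary naturality and the skew axioms (\ref{SMC1})--(\ref{SMC5}).

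First I would dispatch the three $\lambda$-identities (\ref{1}). For $i=1$, ordinary naturality of $\gamma$ applied to the algebra map $\lambda_L\colon E\oV L\to L$ gives $\gamma_{L,M,N}\ci(\lambda_L\smp(M\smp N))=((\lambda_L\smp M)\smp N)\ci\gamma_{E\oV L,M,N}$; substituting the definition of $\lambda_1$ and then rewriting $\gamma_{E\oV L,M,N}\ci\Gamma_{E,L,M\smp N}$ by (\ref{sma-11}) with $V=E$ produces exactly $\lambda'_1\ci(E\oV\gamma_{L,M,N})$. The cases $i=2,3$ are the same in spirit, using naturality of $\gamma$ in the middle, resp.\ last, variable together with (\ref{sma-12}), resp.\ (\ref{sma-13}).

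The unit and counit identities follow the same pattern with $\eta$, $\eps$ replacing $\gamma$. For (\ref{2}) I would rewrite $\Gamma'_{E,R,M}\ci(E\oV\eta_M)=\eta_{E\oV M}$ by (\ref{sma-14}) and then pass $\lambda_M$ across $\eta$ by naturality; (\ref{3}) is dual, via (\ref{sma-15}) and naturality of $\eps$. For (\ref{eta lambda1 rho}) I would unfold $\rho_{R,M}$, simplify $\Gamma'_{E,R,M}\ci(E\oV\eta_M)$ by (\ref{sma-14}), push $\eta$ past $\sigma_M$ by naturality, and collapse $\mu_{R,M}\ci\eta_{R\smp M}$ to the identity by the monad unit law, leaving precisely $\sigma_M=\lambda_1\ci(E\oV\eta_M)$. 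The last identity $\eps_M\ci\rho_{M,R}=\eps_M\ci\lambda_2$ reduces, after replacing $\eps_M\ci\mu_{M,R}$ by $\eps_M\ci(M\smp\eps_R)$ using naturality of $\eps$ and (\ref{SMC3}), to the computation $\eps_R\ci\sigma_R=\ev_{R,R}=\lambda_R$, which follows from naturality of $\eps$, (\ref{sma-15}) and (\ref{SMC5}).

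The hard part will be the two $E^\op$-identities $\gamma_{L,M,N}\ci\rho_i=\rho'_i\ci(E\oV\gamma_{L,M,N})$, $i=1,2$, because $\rho$ is not a single structure arrow but the composite $\mu\ci(\under\smp\sigma)\ci\Gamma'$. The plan there is to unfold $\rho$ on both sides and split the resulting diagram into a \emph{strength block}, where the interaction of $\gamma$ with the $\sigma$-and-$\Gamma'$ part is governed by the $\V$-naturality conditions (\ref{sma-11})--(\ref{sma-13}) exactly as in the $\lambda$-case, and a \emph{multiplication block}, where the two copies of $\mu=(\eps\smp\under)\ci\gamma$ are interchanged past $\gamma$ by the associativity pentagon (\ref{SMC1}) and (\ref{SMC3}). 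The only genuinely delicate point is the bookkeeping of threading the strengths $\Gamma,\Gamma'$ correctly through the pentagon, which is handled by the strength coherence conditions (\ref{sma-6})--(\ref{sma-10}); the underlying skeleton is the $\Set$-level argument already given in \cite{SMC}, the enriched version differing only by these insertions and cancellations of $\Gamma$ and $\Gamma'$.
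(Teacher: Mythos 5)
Your proposal is correct and takes essentially the same route as the paper: the paper's proof consists of the single observation that (\ref{1}), (\ref{2}), (\ref{3}) are immediate from the axioms (\ref{sma-11})--(\ref{sma-15}) expressing that $\gamma$, $\eta$, $\eps$ are transformations of actegory morphisms, with the remaining identities dismissed as ``also not difficult.'' Your handling of the $\lambda$-identities is exactly that argument, and your unfoldings of $\rho$ (pentagon (\ref{SMC1}) plus (\ref{SMC2}), (\ref{SMC3}) for the $\mu$- and $\sigma$-blocks, strengths via (\ref{sma-11})--(\ref{sma-13})) correctly supply the details the paper leaves to the reader.
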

\begin{proof}
Relations (\ref{1}), (\ref{2}), (\ref{3}) are immediate consequences of the skew monoidality axioms (\ref{sma-11})-(\ref{sma-15}) that express the fact that $\gamma$, $\eta$, $\eps$ are transformations of actegory morphisms.
The remaining relations are also not difficult. 
\end{proof}

Now assume that $\M$ is r2-exact and define the skew monoidal product $\smpq$ of two $E$-objects by the reflexive coequalizer
\begin{equation}\label{q coeq}
E\oV(M\smp N)\pair{\rho}{\lambda_2}M\smp N\coequalizer{q_{M,\Eob{N}}} M\smpq\Eob{N}
\end{equation}
in $\M$ which can then be lifted to the $E$-object $\Eob{M}\smpq\Eob{N}=\bra M\smpq\Eob{N},\lambda_1\ket$.
The unit object for $\smpq$ is the $E$-object $\Eob{R}=\bra R,\ev_{R,R}\ket$.
\begin{pro}
For any r2-exact skew monoidal tensored $\V$-category $\bra\M,\smp,R\ket$ the category $_E\M$ of $E$-objects in $\M$ has a unique skew monoidal tensored $\V$-category 
structure with skew monoidal product $\smpq$ and with skew monoidal unit $\Eob{R}$ such that the forgetful functor $\phi:\,_E\M\to\M$ together with the natural transformation $q_{M,\Eob{N}}$ of (\ref{q coeq}) and with the identity arrow $R\to\phi\Eob{R}$ is a skew monoidal functor. The resulting skew monoidal tensored $\V$-category $\bra\,_E\M,\smpq,\Eob{R}\ket$ is r2-exact.
\end{pro}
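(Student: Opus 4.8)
The plan is to mirror the construction and verification of Theorem~\ref{thm: hot}, now using the two Lemmas of this Appendix as the combinatorial input and exploiting that $_E\M$ is the Eilenberg--Moore category $\M^{\dot T}$ of the canonical monad $\dot T=E\oV\under$ of the skew monoidal category $\bra\M,\bullet,R\ket$ of Proposition~\ref{pro: dot}. Since $\M$ is r2-exact it has and $V\oV\under$ preserves reflexive coequalizers, so by the last clause of Proposition~\ref{pro: dot} the dot-structure is itself r2-exact; hence Lemma~\ref{lem: M^T is V-cat} already gives that $_E\M$ is a tensored $\V$-category with reflexive coequalizers preserved by each $V\oV\under$. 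This disposes of conditions (i)--(iii) of r2-exactness for $\bra\,_E\M,\smpq,\Eob{R}\ket$ before $\smpq$ is even introduced.

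First I would establish existence of $\smpq$ and its lift to $E$-objects. The parallel pair $(\rho,\lambda_2)$ of (\ref{q coeq}) is reflexive: both arrows factor through $\Gamma'_{E,M,N}$, and the unit insertion $(i_R\oV(M\smp N))\ci l_{M\smp N}$ is a common section because $\rho$ and $\lambda_2$ each satisfy the unit axiom (\ref{E-ob-2}) of an $E^\op$- respectively $E$-object. Thus $M\smpq\Eob{N}$ exists. Because $E\oV\under$ preserves reflexive coequalizers and $\lambda_1$ commutes with both $\rho$ and $\lambda_2$ by the first Lemma, the action $\lambda_1$ descends along $q_{M,\Eob{N}}$ to an $E$-action on $M\smpq\Eob{N}$, giving the lifted object $\Eob{M}\smpq\Eob{N}=\bra M\smpq\Eob{N},\lambda_1\ket$. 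Functoriality of $\smpq$ and naturality of $q$ follow routinely, and $q_{M,\Eob{N}}$ becomes the multiplicativity constraint of $\phi$.

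Next I would define the coherence morphisms by demanding that $\phi$ be skew monoidal, i.e.\ as the unique factorizations through the relevant composite coequalizers, which are epic. The second Lemma supplies exactly these factorizations: the identities $\gamma\ci\rho_i=\rho'_i\ci(E\oV\gamma)$ and $\gamma\ci\lambda_i=\lambda'_i\ci(E\oV\gamma)$ let $\gamma_{L,M,N}$ descend to $\Eob{\gamma}$; the identity (\ref{eta lambda1 rho}) makes $q_{R,\Eob{M}}\ci\eta_M$ an $E$-object morphism and so defines $\Eob{\eta}$; and (\ref{3}), together with $\eps_M$ coequalizing $(\rho,\lambda_2)$ when $N=R$, defines $\Eob{\eps}$. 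For $\Eob{\gamma}$ in particular I would set up the two nested $3\times3$ reflexive-coequalizer diagrams exactly as (\ref{D1}) and (\ref{D2}) and invoke the Diagonal Lemma \cite[p.4]{Johnstone-TT} to present the source $\Eob{L}\smpq(\Eob{M}\smpq\Eob{N})$ and target $(\Eob{L}\smpq\Eob{M})\smpq\Eob{N}$ as single coequalizers of $L\smp(M\smp N)$ and $(L\smp M)\smp N$; preservation of reflexive coequalizers by $\smp$ in each argument and by $E\oV\under$ is what validates these presentations.

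Finally I would verify the axioms, the remaining $\V$-structure, uniqueness, and the last two exactness conditions. Each skew monoidality axiom (\ref{SMC1})--(\ref{SMC5}) for $(\smpq,\Eob{R})$, after precomposition with the appropriate jointly epic composite of $q$'s and use of the relations above, reduces to the corresponding axiom for $(\smp,R)$ in $\M$, hence holds since the $q$'s are epi. Condition (v), preservation of reflexive coequalizers by $\Eob{M}\smpq\under$, follows because $\smpq$ is assembled from $\smp$ and coequalizers which $\M$ preserves in the second argument; condition (iv), strength of $\Eob{M}\smpq\under$, holds because the strength $\Gamma^{\prime q}$ is a descent of the invertible $\Gamma'$ through the coequalizers and is therefore invertible, exactly as in Theorem~\ref{thm: V-hot}(ii) and diagram (\ref{def of barGamma'}). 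The $\V$-naturality of $\Eob{\gamma},\Eob{\eta},\Eob{\eps}$ is checked by lifting strengths through the coequalizers as in Theorem~\ref{thm: V-hot}(iii). Uniqueness is immediate, since the skew monoidal functor equations for $\phi$ together with epicness of $q$ determine $\smpq$ and all three coherence morphisms. The step I expect to be the main obstacle is the construction of $\Eob{\gamma}$ and the proof of its pentagon, which carry the full weight of the nested-coequalizer bookkeeping.
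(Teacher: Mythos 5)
Your proposal is correct, and its engine --- descent of $\smp$, $\gamma$, $\eta$, $\eps$ through reflexive coequalizers, with the two Appendix Lemmas supplying the equivariance identities --- is the same as in the paper. The packaging differs in two ways. First, the paper does not reprove the unenriched statement at all: it cites \cite[Proposition 4.3]{SMC} for the existence and uniqueness of the ordinary skew monoidal structure $\bra\,_E\M,\smpq,\Eob{R},\gamma^q,\eta^q,\eps^q\ket$ making $\phi$ skew monoidal (remarking only that the exactness hypotheses there can be relaxed to reflexive coequalizers preserved by each $M\smp\under$), and then spends the entire written proof on the enrichment: the $\V$-action on $_E\M$, the $\V$-valued hom as an equalizer, the strengths $\Gamma^q,\Gamma^{\prime q}$ via (\ref{Gamma'^q}), the $\V$-naturality of the coherence morphisms, and the r2-exactness checks. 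Your reconstruction of that unenriched layer by mirroring Theorem \ref{thm: hot} ($3\times 3$ diagrams, the Diagonal Lemma of \cite{Johnstone-TT}, unique factorizations through the epic $q$'s) is exactly the kind of argument the citation hides, so it is a valid, self-contained substitute, at the cost of length. Second, you obtain the tensored $\V$-structure of $_E\M$, its reflexive coequalizers and their preservation by $V\oV\under$ by identifying $_E\M$ with $\M^{\dot T}$ for the dot-structure of Proposition \ref{pro: dot} and invoking Lemma \ref{lem: M^T is V-cat}, whereas the paper redoes this construction by hand (``like for $\M^T$''). Your shortcut is legitimate --- there is no circularity, since neither Proposition \ref{pro: dot} nor Lemma \ref{lem: M^T is V-cat} depends on the Appendix --- and it is arguably cleaner, disposing of conditions (i)--(iii) of Definition \ref{def: rex smc} in one stroke. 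One small citation slip, cosmetic rather than a gap: equivariance of $\Eob{\eta}_{\Eob{M}}=q_{R,\Eob{M}}\ci\eta_M$ needs (\ref{2}) in addition to (\ref{eta lambda1 rho}), combined with the fact that $q$ coequalizes $\rho$ and $\lambda_2$; and when transporting the skew monoidal axioms back from $\M$ to $_E\M$ you should also invoke faithfulness of $\phi$, as the paper does.
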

\begin{proof}
On the level of ordinary categories and functors the unique skew monoidal structure \newline
$\bra\,_E\M,\smpq,\Eob{R},\gamma^q,\eta^q,\eps^q\ket$ that makes $\phi$ skew monoidal has already been constructed
in \cite[Proposition 4.3]{SMC}. Although the right exactness conditions of \cite[Proposition 4.3]{SMC} were somewhat too generous, by inspecting the proof one 
can see that what is really needed is only the existence of reflexive coequalizers and preservation of them by all the endofunctors $M\smp\under$. 
So, it suffices here to deal with the $\V$-actegory structure.

Like for $\M^T$ in the proof of Lemma \ref{lem: M^T is V-cat}, the action of $\V$ on $_E\M$ is defined by
\[
V\oV\Eob{M}:=\bra V\oV M, (V\oV\lambda_M)\ci E^{-1}_{V,M}\ket
\]
and the $\V$-valued hom by the equalizer
\[
_E\M(\Eob{M},\Eob{N})\equalizer{}\M(M,N)\longerpair{\M(E\oV M,\lambda_N)\ci(E\oV\under)_{M,N}}{\M(\lambda_M,N)}\M(E\oV M,N)
\]
where the $\V$-functor structure map $(E\oV\under)_{M,N}:\M(M,N)\to\M(E\oV M,E\oV N)$ is constructed  using 
(\ref{arrow map from strength}) from the strength
\begin{equation}
E_{V,M}:=a^{-1}_{E,V,M}\ci(s_{V,E}\oV M)\ci a_{E,V,M}\ :\ V\oV(E\oV M)\to E\oV(V\oV M)
\end{equation}
of the (strong) morphism $E\oV\under:\M\to\M$ of $\V$-actegories.
It is then easy to check that $\under\oV\Eob{M}$ is left adjoint to $_E\M(\Eob{M},\under)$, so that $_E\M$ is a hommed $\V$-actegory.

The two strengths of the 2-variable $\V$-actegory morphism $\smpq$ are uniquely determined by those of $\smp$ due to the requirement that $q_{\Eob{M},\Eob{N}}$ be a transformation of actegory morphisms:
\begin{equation} \label{Gamma'^q}
\begin{CD}
V\oV(M\smp N)@>V\oV q_{M,\Eob{N}}>>V\oV(M\smpq\Eob{N})\\
@V{\Gamma'_{V,M,N}}VV @VV{\Gamma^{\prime q}_{V,M,\Eob{N}}=\phi\Gamma^{\prime q}_{V,\Eob{M},\Eob{N}}}V\\
M\smp(V\oV N)@>q_{M,V\oV\Eob{N}}>>M\smpq(V\oV\Eob{N})
\end{CD}
\end{equation}
and a similar diagram defines $\Gamma^q_{V,\Eob{M},\Eob{N}}:V\oV(\Eob{M}\smpq\Eob{N})\to(V\oV\Eob{M})\smpq\Eob{N}$.

The coherence conditions that $\gamma^q$, $\eta^q$ and $\eps^q$ should satisfy in order that they become transformations of actegory morphisms follow
from the analogous coherence conditions (\ref{sma-11}--\ref{sma-15}) for $\gamma$, $\eta$ and $\eps$ by using that $q_{\Eob{M},\Eob{N}}$ is a coequalizer,
$V\oV\under$ preserves such coequalizers and $\phi$ is faithful.

Since $\M$ is r2-exact, the monad $E\oV\under$ preserves reflexive coequalizers therefore its Eilenberg-Moore category $_E\M$ also has reflexive coequalizers.
Furthermore, $\Eob{M}\smpq\under$ preserves such coequalizers because both functors $M\smpq\,\phi\under$ and $E\oV(M\smpq\,\phi\under)$ occurring in (\ref{q coeq}) 
preserve them. The strength $\Gamma^{\prime q}$ in (\ref{Gamma'^q}) is an isomorphism since both horizontal arrows are coequalizers and $\Gamma'$ is an isomorphism.
Finally, $V\oV\under:\,_E\M\to\,_E\M$ also preserves reflexive coequalizers because $V\oV\under:\M\to\M$ does and $\phi$ creates reflexive coequalizers.
This proves r2-exactness of $\bra\,_E\M,\smpq,\Eob{R}\ket$.
\end{proof}

\textbf{Proof of Theorem \ref{thm: supplement of SMC}}
Applying the construction of Section \ref{sec: hot} there is an r2-exact skew monoidal tensored $\V$-category $(\,_E\M)^{T_q}$ of $T_q=\Eob{R}\smpq\under$-algebras on $_E\M$. Applying also the lifting procedure of Theorem \ref{thm: lift} to the skew monoidal functor $\phi:\,_E\M\to\M$ we obtain the $\bar\phi$ of Theorem \ref{thm: supplement of SMC}. By \cite[Theorem 5.3. (ii)]{SMC} this $\bar\phi$ is an equivalence
of categories therefore
it suffices to show strong skew monoidality of $\bar\phi$. The $\bar\phi_0$ is the identity and $\bar\phi_2$ is defined, as in Theorem \ref{thm: lift}, by the diagram
\begin{equation} \label{diag: phi2}
\parbox{300pt}{
\begin{picture}(300,120)
\put(0,95){$M\smp TN$}
\put(60,96){\vector(1,0){60}} \put(60,84){$M\smp(\phi\beta\ci q_{\Eob{N}})$}
\put(60,100){\vector(1,0){60}} \put(75,106){$\mu_{M,N}$}
\put(143,95){$ M\smp N$}
\put(190,98){\vector(1,0){60}} \put(205,105){$\pi_{M,\bar\phi\alg{N}^q}$}
\put(260,95){$M\hot\bar\phi\alg{N}^q$}

\put(10,80){\vector(0,-1){50}} \put(15,54){$q_{M,T^q\Eob{N}}\ci(M\smp q_{\Eob{N}})$}
\put(160,80){\vector(0,-1){50}} \put(165,54){$q_{M,\Eob{N}}$}
\dashline{3}(280,80)(280,32)\put(280,32){\vector(0,-1){0}} \put(285,54){$\bar\phi_{M,\alg{N}^q}$} 

\put(-5,10){$M\smpq T^q\Eob{N}$}
\put(65,11){\vector(1,0){60}} \put(75,0){$M\smpq \beta$}
\put(65,15){\vector(1,0){60}} \put(80,21){$\mu^q_{M,\Eob{N}}$}
\put(142,10){$ M\smpq\Eob{N}$}
\put(190,13){\vector(1,0){60}} \put(205,20){$\pi^q_{M,\alg{N}^q}$}
\put(265,10){$ M{\bar\smp}_q\alg{N}^q$}
\end{picture}}
\end{equation}
where $\alg{N}^q=\bra\Eob{N},\beta\ket$ is a $T^q$-algebra with underlying $E$-object $\Eob{N}$ and with action $\beta:R\smpq\Eob{N}\to\Eob{N}$, $M$ is an object of $\M$,
$q_{\Eob{N}}=q_{R,\Eob{N}}$ and ${\bar\smp}_q$ denotes the horizontal tensor product in $(\,_E\M)^{T_q}$. The $E$-object $\Eob{N}=\bra N,\lambda_{\Eob{N}}\ket$ is
related to the $T$-algebra $\alg{N}=\bar\phi\alg{N}^q$ by the monad morphism $\sigma_N$ because
\begin{align*}
\phi\beta\ci q_{\Eob{N}}\ci\sigma_N&=\phi\beta\ci q_{\Eob{N}}\ci\lambda_1[R\smp N]\ci(E\oV\eta_N)=\\
&=\phi\beta\ci\lambda_{R\smpq\Eob{N}}\ci(E\oV q_{\Eob{N}})\ci(E\oV\eta_N)=\\
&=\lambda_{\Eob{N}}\ci(E\oV \phi\beta)\ci(E\oV\eta^q_{\Eob{N}})=\lambda_{\Eob{N}}\,.
\end{align*}
Therefore composing the parallel pair of the 1st row in (\ref{diag: phi2}) with $M\smp\sigma_N$ we see that $\pi_{M,\alg{N}}\ci\rho=\pi_{M,\alg{N}}\ci\lambda_2$ therefore
$\pi_{M,\alg{N}}$ factorizes through $q_{M,\Eob{N}}$ by a unique $u: M\smpq\Eob{N}\to M\hot\alg{N}$. Then also 
$u$ factors uniquely through $\pi^q_{M,\alg{N}^q}$ and this defines an arrow which is necessarily the inverse of $\bar\phi_{M,\alg{N}^q}$.

\textbf{Acknowledgement}: The author wishes to thank the organizers, Edwin Beggs and Tomasz Brzezi\'nski, for the warm hospitality during the memorable  workshop "Categorical and Homological Methods in Hopf Algebras" held in Swansea, UK in 2013 December. 
This work was partially supported by the Hungarian Scientific Research Fund grant OTKA 108384. The author is indebted to the
referee for the crucial observations that essentially contributed to the content of Section \ref{monoidality}.

\end{document}